\newtheorem{theorem}{Theorem}[section]
\newtheorem{lemma}[theorem]{Lemma}
\newtheorem{proposition}[theorem]{Proposition}
\newtheorem{corollary}[theorem]{Corollary}
\newtheorem{remark}[theorem]{Remark}
\newtheorem*{definition}{Definition}
\DeclarePairedDelimiterX{\inp}[2]{\langle}{\rangle}{#1, #2}
\let\L\relax
\let\C\relax
\let\X\relax
\newcommand{\R}{\ensuremath{\mathbb{R}}}
\newcommand{\C}{\ensuremath{\mathbb{C}}}
\newcommand{\V}{\ensuremath{\mathbb{V}}}
\newcommand{\N}{\ensuremath{\mathcal{N}}}
\newcommand{\L}{\ensuremath{\mathcal{L}}}
\newcommand{\X}{\ensuremath{\mathfrak{X}}}
\DeclareMathOperator{\mg}{g}
\DeclareMathOperator{\K}{K}
\DeclareMathOperator{\Ad}{Ad}
\DeclareMathOperator{\ad}{ad}
\DeclareMathOperator{\tr}{tr}
\DeclareMathOperator{\id}{id}
\DeclareMathOperator{\diag}{diag}
\DeclareMathOperator{\spann}{span}
\renewcommand{\i}{\ensuremath{\mathrm{i}}}
\newcommand{\g}[1]{\ensuremath{\mathfrak{#1}}}
\begin{document}
\title[Geodesic completeness of Lorentzian simple Lie groups]{Geodesic completeness of some Lorentzian simple Lie groups}
\author[E.\ Ebrahimi]{E. Ebrahimi}
\author[S.\ M.\ B.\ Kashani]{S.M.B. Kashani}
\author[M.\ J.\ Vanaei]{M.J. Vanaei}

\address{Dept. of Pure Math., Faculty of Math. Sciences, Tarbiat Modares University, P.O. Box: 14115-134, Tehran, Iran}

\email{esmail.ebrahimi@modares.ac.ir}
\email{kashanim@modares.ac.ir}
\email{javad.vanaei@modares.ac.ir}

\subjclass[2010]{53C22, 53C50, 57M50, 17B08, 22E30}
\keywords{(semi)simple Lie group, left-invariant metric, Lorentzian metric, Killing vector field, left-invariant vector field, strongly causal, Euler equation, generalized conical spiral, limit curve, $SL_2(\C)$}

\begin{abstract}
In this paper we investigate geodesic completeness of left-invariant Lorentzian metrics on a simple Lie group $G$ when there exists a left-invariant Killing vector field $Z$ on $G$. Among other results, it is proved that if $Z$ is timelike, or $G$ is strongly causal and $Z$ is lightlike, then the metric is complete. We then consider the special complex Lie group $SL_2(\C)$ in more details and show that the existence of a lightlike vector field $Z$ on it, implies geodesic completeness. We also consider the existence of a spacelike vector field $Z$ on $SL_2(\C)$ and provide an equivalent condition for the metric to be complete. This illustrates the complexity of the situation when $Z$ is spacelike.
\end{abstract}

\maketitle

\section{Introduction}\label{section1}
Any invariant Riemannian metric on a homogeneous space $G/H$ is known to be geodesically complete (\cite{ONe83}, Remark~9.37). Invariant non-Riemannian metrics, however, require additional conditions in general to be geodesically complete. For example, if the homogeneous space is compact, then any invariant semi-Riemannian metric was proved to be complete \cite{Mar72}. 

Alekseevskii and Putko \cite{AlePut90} showed that geodesic completeness of invariant metrics on homogeneous spaces can be investigated through the case of $H=\{e\}$, that is, by studying Lie groups equipped with an invariant semi-Riemannian metric, which are referred to as semi-Riemannian Lie groups. 

Equations that determine geodesics in a semi-Riemannian Lie group are known as \textit{Euler equations}. This is due to the work of Arnold \cite{Arn13} who re-derived in terms of modern Lie theory the Euler equation which shows that the motion of a rigid body in the three-dimensional Euclidean space is described as the motion along geodesics in the group of rotations with an invariant metric. He then realized that the Euler equation can be extended to any arbitrary Lie group endowed with an invariant metric.

In case of semisimple (specially, simple) Lie groups, since the Killing form is non-degenerate, the geodesic equation translates into an Euler equation in the Lie algebra, allowing one to apply algebraic methods. Given a semi-Riemannian semisimple Lie group $(G,\mg)$, the Euler equation defines a homogeneous quadratic vector field $F_{\mg}$ on the Lie algebra $\g{g}$, called the \textit{Euler field}. Integral curves of $F_{\mg}$ are `reflections' of geodesics of $G$ in $\g{g}$. Indeed, integral curves of $F_{\mg}$ are complete if and only if geodesics of $\mg$ are complete. 

In this paper we study left-invariant Lorentzian metrics on simple Lie groups. We only consider non-compact groups as invariant metrics on compact groups are always complete. Since there is no distance associated to an indefinite metric, we use the notion of completeness in the sense of geodesic completeness. 

The dual space $\g{g}^*$ naturally admits a Poisson structure which can be induced on $\g{g}$ by identifying $\g{g}$ and $\g{g}^*$ via its non-degenerate Killing form (\cite{LauPicVan13},~Ch.~7). Then the Euler equation in $\g{g}$ represents a Hamiltonian system with the Hamiltonian function $\mg^*(x,x)$, where $\mg^*$ is the induced scalar product on $\g{g}^* \cong \g{g}$ from $\mg$. There is no systematic method in general to solve a Hamiltonian system or examine the completeness of its solutions. However, the more first integrals for a Hamiltonian system is available, the more can be said about the system. The Euler equation primitively has the first integrals $\mg^*(x,x)$ and $\tr(\ad_x^m)$, $m \in \mathbb{N}$. On $\g{sl}_2(\R)$, because of its low dimension, these first integrals are adequate to guarantee the integrability of the system. Using this fact, Bromberg and Medina \cite{BroMed08} fully characterised completeness of invariant Lorentzian metrics on $SL_2(\R)$ by proving that; a left-invariant Lorentzian metric $\mg$ on $SL_2(\R)$ is complete if and only if $F_{\mg}$ has no non-zero fixed point. Non-zero fix points of $F_{\mg}$, as an operator on $\g{g}$, are called \textit{idempotents}.

Finding new first integrals in dimensions greater than $3$, other than those mentioned above, is a major challenge. In Proposition~\ref{new-first-integral}, we show that if there exists a left-invariant Killing vector field $Z$, then one can obtain an additional first integral for the Euler equation. We then use this new first integral along with $\mg^*(x,x)$ and $\tr(\ad_x^m)$ to prove some properties of the adjoint operator $\ad_Z$, particularly, that $\ad_Z$ is not semisimple or nilpotent (Proposition~\ref{neither-semisimple-nilpotent}). Moreover, we see that if $Z$ is lightlike and $\mg$ is incomplete, then $\ad_Z$ is compact (Lemma~\ref{compact-element}), allowing us to obtain a causal curve lies in a compact subspace; the type of curves that `strongly causal' manifolds do not accept. As a result, we get our first main theorem as follows:
\begin{theorem}\label{MainTheorem1}
Let $(G,\mg)$ be a Lorentzian simple Lie group with a left-invariant Killing vector field $Z$ on it. Then, $\mg$ is complete in the following cases:  
\begin{itemize}
    \item[$(i)$]  if $Z$ is timelike, 
    \item[$(ii)$]  if $G$ is strongly causal and $Z$ is lightlike. 
\end{itemize}
\end{theorem}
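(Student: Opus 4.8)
The plan is to argue by contradiction in both cases, working throughout with the Euler field $F_{\mg}$ on $\g{g}$ and using the stated equivalence that integral curves of $F_{\mg}$ are complete if and only if the geodesics of $\mg$ are. By Proposition~\ref{new-first-integral} the left-invariant Killing field $Z$ supplies an additional first integral of $F_{\mg}$ that is \emph{linear} in $x$: it is the momentum integral $\mg(\dot\gamma,Z)$ attached to the Killing field $Z$, read off along the integral curve corresponding to a geodesic $\gamma$; call this linear form $\ell$. Since the Killing form is nondegenerate and $Z\neq 0$, $\ell\neq 0$. Together with the quadratic first integral $\mg^{*}(x,x)$ (constancy of the speed of geodesics), $\ell$ confines every maximal integral curve $x(t)$ of $F_{\mg}$ to a fixed level set $S=\{\,\mg^{*}(x,x)=\epsilon\,\}\cap\{\,\ell=c\,\}$, which is nonempty because it contains the initial point.

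For part $(i)$, the key observation is that when $Z$ is timelike the affine hyperplane $\{\ell=c\}$ is \emph{spacelike} for $\mg^{*}$: its $\mg^{*}$-normal is the $\mg^{*}$-dual vector of $\ell$, and a short computation identifies the $\mg^{*}$-square of that vector (up to a positive factor) with $\mg(Z,Z)$, which is negative. Hence $\mg^{*}$ restricts to a positive definite form on $\{\ell=c\}$, and on this Euclidean affine hyperplane the equation $\mg^{*}(x,x)=\epsilon$ carves out a compact set --- a round sphere, possibly a single point --- so $S$ is compact. Since the maximal integral curve $x(t)$ of the smooth field $F_{\mg}$ remains inside the compact set $S$, the escape lemma forces its interval of definition to be all of $\R$. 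As the initial point was arbitrary, all integral curves of $F_{\mg}$ are complete, hence $\mg$ is complete.

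For part $(ii)$, assume $G$ is strongly causal, $Z$ is lightlike, and $\mg$ is incomplete. By Lemma~\ref{compact-element} the operator $\ad_{Z}$ is then a compact element of $\g{g}$, i.e.\ it is semisimple with purely imaginary spectrum; therefore $\Ad_{\exp(tZ)}=\exp(t\,\ad_{Z})$ has relatively compact image, and (for $G$ with finite center, in particular the groups of interest) the one-parameter subgroup $\exp(\R Z)$ has compact closure $T\subseteq G$, a torus. Consider the curve $c(t)=\exp(tZ)$ for $t\in[0,\infty)$. Its velocity at $c(t)$ is the value there of the left-invariant field $Z$, so $\mg(\dot c,\dot c)=\mg(Z,Z)=0$ and $c$ is lightlike, hence causal; since $Z\neq 0$ it has no future endpoint (a limit $\exp(tZ)\to p$ would give $p\,\exp(sZ)=p$ for all $s$, forcing $Z=0$), so $c$ is future-inextendible; and $c$ is imprisoned in the compact set $T$. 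This contradicts the standard fact that a strongly causal Lorentzian manifold admits no future-inextendible causal curve contained in a compact set (see \cite{ONe83}). Hence $\mg$ is complete.

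Modulo Proposition~\ref{new-first-integral}, part $(i)$ is essentially a signature count plus the escape lemma, so the real content there is the very existence of the new linear first integral. The harder half is part $(ii)$, where the substantive work is carried by Lemma~\ref{compact-element} (which in turn rests on structural facts such as Proposition~\ref{neither-semisimple-nilpotent}); within the assembly above, the delicate point is turning the purely algebraic conclusion ``$\ad_{Z}$ is compact'' into a genuine imprisoned causal curve, which is exactly where one must be careful about the center of $G$ --- for groups with infinite center the relative compactness of $\exp(\R Z)$ in $G$ itself need not follow, so that case requires a separate argument (or a restriction to finite center, which suffices for $SL_2(\C)$). I expect this to be the only real obstacle in deducing the theorem from the quoted ingredients.
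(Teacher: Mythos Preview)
Your argument is correct and mirrors the paper's: part~$(i)$ uses the two first integrals $\mg^{*}(x,x)$ and $\K(x,z)$ to confine Euler trajectories (you show the joint level set is compact directly, whereas Proposition~\ref{timelike-killing} argues by contradiction via a limiting direction $\theta$, but the underlying computation $\mg^{*}(A_{\mg}z,A_{\mg}z)=\mg(z,z)$ is the same), and part~$(ii)$ is exactly Corollary~\ref{lightlike-strungly-causal}, i.e.\ Lemma~\ref{compact-element} followed by the imprisoned-causal-curve contradiction. Your caution about the center is apt --- the paper disposes of it by asserting, via \cite{ColMcG93}, that a simple $G$ finitely covers $\Ad(G)$ --- and note that the case $\dim G=3$ is handled separately by Proposition~\ref{killing-sl2r}, which you should cite since Lemma~\ref{compact-element} assumes $\dim G>3$.
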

Clearly,  Theorem~\ref{MainTheorem1} holds for globally hyperbolic simple Lie groups as they possess stronger condition than being strongly causal.  

The case that $Z$ is spacelike is much more challenging. To get some understanding of this case and also to compare the situation in dimension greater than $3$ with the special case of $SL_2(\R)$, we investigate left-invariant Lorentzian metrics on $SL_2(\C)$. To do so, we introduce a specific type of integral curves for the Euler field $F_{\mg}$ in $\g{g}$. We call an integral curve $u(t)$ of $F_{\mg}$ a \textit{generalized conical spiral} (GCS) if it satisfies $u(s) = r u(0)$ for some $r, s \in \R$ with $r>0$. The radial line generated by an idempotent is the graph of an special case of a GCS. Moreover, on $\g{sl}_2(\R)$ such radial lines are the only examples of GCS. So, the result of \cite{BroMed08} for $SL_2(\R)$ can be re-stated as: a left-invariant Lorentzian metric $\mg$ on $SL_2(\R)$ is complete if and only if $F_{\mg}$ has no GCS. Also, we will see that when there exists a left-invariant Killing vector field on $SL_2(\R)$, then the Lorentzian metric is complete (Proposition~\ref{killing-sl2r}).  Our next theorem states analogous result for $SL_2(\C)$.

\begin{theorem}\label{MainTheorem2}
Let $\mg$ be a left-invariant Lorentzian metric on $SL_2(\C)$. Suppose that there exists a left-invariant Killing vector field $Z$ on $SL_2(\C)$. Then, $\mg$ is complete if and only if $F_{\mg}$ has no GCS. Moreover, $F_{\mg}$ can have GCS only when $Z$ is spacelike. 
\end{theorem}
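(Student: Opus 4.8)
The plan is to combine the abstract structural results of Theorem~\ref{MainTheorem1} and its supporting propositions with an explicit analysis of the low-dimensional complex simple Lie algebra $\g{sl}_2(\C)$. I would organize the argument around the three-way classification of the Killing field $Z$ according to whether $Z$ is timelike, lightlike, or spacelike, and show that a GCS can only occur in the last case; completeness then follows in the first two cases directly from Theorem~\ref{MainTheorem1}, while in the spacelike case one must do extra work to reduce completeness to the nonexistence of a GCS.

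First I would handle the easy cases. If $Z$ is timelike, Theorem~\ref{MainTheorem1}$(i)$ gives completeness of $\mg$ immediately, and completeness forces $F_{\mg}$ to have no GCS (since a GCS $u(t)$ with $u(s)=ru(0)$, $r>0$, $r\neq 1$, can be rescaled/reparametrized to produce an integral curve that escapes to infinity or fails to extend, contradicting completeness; the case $r=1$ gives a periodic orbit which is harmless, so one must be slightly careful and in fact the relevant obstruction is $r\neq 1$, or $r=1$ with $s\neq 0$ only if the orbit is genuinely closed — here I would invoke the same self-similarity scaling argument used to relate idempotents to incompleteness in \cite{BroMed08}). For $Z$ lightlike, I would show $SL_2(\C)$ is strongly causal when it carries a left-invariant Lorentzian metric with a lightlike Killing field — or more precisely invoke whatever causality input is available — and again apply Theorem~\ref{MainTheorem1}$(ii)$; combined with Proposition~\ref{neither-semisimple-nilpotent} and Lemma~\ref{compact-element}, the lightlike case cannot support a GCS either. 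This already yields the ``Moreover'' clause: GCS can occur only when $Z$ is spacelike.

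The substance of the theorem is then the biconditional in the spacelike case. Here I would argue as follows. The existence of the Killing field $Z$ gives, via Proposition~\ref{new-first-integral}, an extra first integral $h_Z$ for the Euler field $F_{\mg}$ on $\g{sl}_2(\C)$, in addition to the Hamiltonian $\mg^*(x,x)$ and the Casimir-type functions $\tr(\ad_x^m)$. On the six-real-dimensional algebra $\g{sl}_2(\C)$ these functions should cut the generic orbit down to a low-dimensional (one- or two-dimensional) level set, on which the Euler flow can be analyzed essentially explicitly. I would parametrize these level sets, determine the phase portrait of $F_{\mg}$ restricted to them, and read off exactly when a solution blows up in finite time. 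The claim to prove is that finite-time blow-up of some integral curve is equivalent to the existence of a curve with the self-similarity property $u(s)=ru(0)$: one direction is the standard observation that a GCS, by homogeneity of $F_{\mg}$ (quadratic vector field), can be rescaled to an incomplete curve; the other direction — that incompleteness forces the existence of such a self-similar curve — is where the explicit description of the invariant level sets and a limiting/compactness argument on the projectivization of the blow-up direction is needed (a blowing-up trajectory, after projecting and using the first integrals to control its behavior, should limit onto a curve that is itself an integral curve satisfying the GCS relation, in the spirit of the ``limit curve'' technique advertised in the keywords).

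I expect the main obstacle to be precisely this last step in the spacelike case: extracting, from a generic incomplete integral curve, a genuine GCS as a limit. Unlike the $SL_2(\R)$ situation — where the analogous limit is always a radial line through an idempotent because the level sets are simple enough — on $SL_2(\C)$ the extra first integral $h_Z$ and the larger dimension allow the limiting object to be a nontrivial spiral rather than a fixed ray, so one cannot simply quote idempotent theory. The technical heart will be controlling the asymptotics of the blow-up (rate and direction) using $\mg^*(x,x)$, $\tr(\ad_x^m)$ and $h_Z$ simultaneously, showing the rescaled curves converge (in $C^1_{loc}$, say) to a limit integral curve, and verifying that the limit genuinely closes up into the form $u(s)=ru(0)$ rather than degenerating. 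A secondary difficulty is the bookkeeping needed to verify that $Z$ spacelike is genuinely the only regime producing a GCS — this rests on Proposition~\ref{neither-semisimple-nilpotent} (ruling out $\ad_Z$ semisimple or nilpotent) together with the classification of $\ad_Z$-orbit structure on $\g{sl}_2(\C)$, and I would present this as a short case check on the Jordan form of $\ad_Z$.
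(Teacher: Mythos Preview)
Your lightlike case has a genuine gap. You propose to invoke Theorem~\ref{MainTheorem1}$(ii)$ by first establishing that $(SL_2(\C),\mg)$ is strongly causal, but you give no argument for this and it is not obvious (nor is it what the paper does). The paper's Proposition~\ref{causal-killing-sl2} bypasses causality entirely: if $\mg$ were incomplete, the argument of Lemma~\ref{z-Az-lightlike} forces $A_{\mg}z$ to be nilpotent, and since $[z,A_{\mg}z]=0$, the specific algebraic fact about $\g{sl}_2(\C)$ that the centralizer of a nonzero element is a complex line gives $A_{\mg}z=\lambda z$ for some $\lambda\in\C$, so $z$ itself is nilpotent, contradicting Proposition~\ref{neither-semisimple-nilpotent}. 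You should use this one-line centralizer observation rather than attempting a causality argument.

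In the spacelike case your strategy is in the right spirit but misses the key structural reduction, and your proposed limit-curve extraction of a GCS is not how the paper proceeds. The paper first puts $A_{\mg}^{-1}$ and $\ad_z$ into an explicit normal form (Lemma~\ref{lema1-spacelike=sl2c}), then observes that any unbounded Euler trajectory must live in the hyperplane $\K(x,z)=0$, and crucially that on this hyperplane the Euler field \emph{projects to a linear vector field} $V$ on the four-dimensional space $[\xi,\g{sl}_2(\C)]$ (Lemma~\ref{lema2-spacelike=sl2c}). Completeness is then decided by the sign of a single parameter $d=(d_1-a)(d_1-b)$ controlling the eigenvalues of $V$: when $d\le 0$ all solutions are bounded; when $d>0$ the common zero set of the first integrals inside $\K(x,z)=0$ is a cone $S^1\times\R^+$, and any unbounded trajectory on it either wraps around the $S^1$ factor---which directly exhibits the relation $u(s)=ru(0)$---or is radial through an idempotent. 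No limiting procedure is needed: the incomplete trajectory \emph{is} the GCS, not merely a curve that limits onto one. Your projectivization/compactness outline would have to recover this cylinder picture to close, and as stated it does not.
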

So, in particular, when $Z$ in Theorem~\ref{MainTheorem2} is timelike or lightlike, the metric $\mg$ is complete. It also follows that when there exists a left-invariant Killing vector field on $SL_2(\C)$, then $\mg$ is complete if and only if it is lightlike complete (Corollary~\ref{complete-lightlikecomplete}).

A different point of view was suggested in \cite{Tho14} to study and characterize completeness of invariant Lorentzian metrics on semisimple Lie groups. Let $\Lambda^*_{\mg}$ and $\N$ denote, respectively, the \textit{null cone} consisting of all null vectors in $\g{g}\cong \g{g}^*$ w.r.t $\mg^*$, and the \textit{nilpotent cone} which is the set of all non-zero nilpotent elements. In \cite{Tho14} it is shown that a left-invariant Lorentzian metric on $SL_2(\R)$ is incomplete if and only if $\Lambda^*_{\mg}$ and $\N$ are transversal. It is further claimed that this is true for any semisimple Lie group. We give a counterexample to this claim and prove the following theorem for $SL_2(\C)$ regarding this point of view on completeness. We then see that, in general, even completely determining $\Lambda^*_{\mg} \cap \N$ might be inconclusive for completeness of the metric.

\begin{theorem}\label{MainTheorem3}
Let $\mg$ be a left-invariant Lorentzian metric on $SL_2(\C)$ and the null cone $\Lambda^*_{\mg}$ and the nilpotent cone $\N$ are transversal. If there is no idempotent, then, $\Lambda^*_{\mg} \cap \N = T^2 \times \R$ where $T^2$ is a two-dimensional torus.
\end{theorem}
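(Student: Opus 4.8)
\textit{Step 1 (reduction to a quartic cone).} The plan is to turn the statement into a question about a real quartic on $\C^2$. Under the isomorphism of $SL_2(\C)$-modules $\g{sl}_2(\C)\cong\operatorname{Sym}^2(\C^2)$, the nilpotent cone $\N$ is exactly the image of the punctured Veronese map $\nu\colon\C^2\setminus\{0\}\to\g{sl}_2(\C)$, $v\mapsto v\!\cdot\!v$ (the nonzero rank-one symmetric tensors), which is a $2$-to-$1$, $SL_2(\C)$-equivariant covering of $\N$ and an immersion. Hence $\Lambda^*_{\mg}\cap\N=\nu(\{Q=0\})$, where $Q(v):=\mg^*(\nu(v),\nu(v))$ is a real homogeneous quartic form; since $Q$ is even, $\Lambda^*_{\mg}\cap\N\cong\{Q=0\}/\{\pm1\}$. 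Being a quartic form, $\{Q=0\}\setminus\{0\}$ is the $\R_{>0}$-cone over $C:=\{Q=0\}\cap S^3$, and $\{\pm1\}$ acts on $S^3$ as the (free) antipodal map, so the radial factor splits off and $\Lambda^*_{\mg}\cap\N\cong(C/\{\pm1\})\times\R$. It remains to prove $C/\{\pm1\}\cong T^2$; note that $C\neq\varnothing$ must be part of the hypotheses (an empty intersection is vacuously transversal but is not $T^2\times\R$), so I read nonemptiness into the transversality assumption.

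\textit{Step 2 (smoothness and a normal form).} Transversality of $\Lambda^*_{\mg}$ and $\N$ is equivalent to $0$ being a regular value of $Q$ on $\C^2\setminus\{0\}$, hence (by homogeneity) of $Q|_{S^3}$; so $C$ is a smooth closed surface, orientable because it is cut out in $S^3$ by one equation with nonvanishing differential, and therefore $C/\{\pm1\}$ is a smooth closed surface. Decomposing $\mg^*$ on $\g{sl}_2(\C)\cong\operatorname{Sym}^2(\C^2)$ into its Hermitian part $h$ and its $\C$-bilinear symmetric part, and observing that the only real $\Ad$-invariant bilinear forms on $\g{sl}_2(\C)$ — the real and imaginary parts of $\tr(XY)$, both $\C$-bilinear — vanish identically on $\N$ (since $\tr(\nu(v)^2)=-2\det\nu(v)=0$), one gets a canonical presentation
\[
Q(v)=h\big(\nu(v),\overline{\nu(v)}\big)+\operatorname{Re}\psi(v),
\]
with $h$ a Hermitian form on $\C^3$ and $\psi$ a binary quartic. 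Replacing $\mg^*$ by $\Ad_g^*\mg^*$ replaces $(h,\psi)$ by an $SL_2(\C)$-congruent pair and changes $C/\{\pm1\}$ only by a homeomorphism (a linear change of coordinates on $\C^2$ respecting the cone and the antipodal action), so I may normalise: bring $\psi$ to a standard binary-quartic form by a Möbius substitution and then place $h$ in a compatible canonical position.

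\textit{Step 3 (identifying the surface), and the main obstacle.} On $S^3$ write $v_1=\sqrt{s}\,e^{\mathrm{i}\theta_1}$, $v_2=\sqrt{1-s}\,e^{\mathrm{i}\theta_2}$ with $s\in[0,1]$, so that $Q|_{S^3}$ is an explicit function of $(s,\theta_1,\theta_2)$. The crux is to show that, after normalising, for every $(\theta_1,\theta_2)$ the polynomial $s\mapsto Q|_{S^3}(s,\theta_1,\theta_2)$ has exactly one zero, that this zero lies in $(0,1)$, and that it is simple; then $C$ is the graph of a function $s(\theta_1,\theta_2)$ over the torus $\{(\theta_1,\theta_2)\}$, hence a $2$-torus. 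Two things must be established: (a) that $\mg^*$ being Lorentzian, together with transversality, constrains $h$ and $\psi$ so that this "single simple interior root" behaviour holds on every angular fibre — this is where the one-sided Lorentzian signature is used, ruling out the definite and double-root degenerations; and (b) that the absence of an idempotent excludes the remaining possibilities, in which $C$ would have the wrong genus, acquire extra components (for instance two concentric tori), or fail to be a graph over the angular torus (meeting $\{v_1=0\}\cup\{v_2=0\}$). For (b) I would translate "$\mg$ has an idempotent" into the statement that there is a nonzero $e$, semisimple and $\Ad$-conjugate to $\pm\tfrac{1}{2}\diag(1,-1)$, with $\phi(e)$ lying on the nilpotent line determined by $e$ (where $\mg=B(\phi\,\cdot\,,\cdot)$ and $B$ is the Killing form), and then show that this incidence is exactly what occurs on the boundary between the "one torus" regime and the excluded ones. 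Finally, since $Q$ is even, $s(\cdot)$ is invariant under $(\theta_1,\theta_2)\mapsto(\theta_1+\pi,\theta_2+\pi)$, so the antipodal map acts on the graph-torus $C$ as this translation of the base; the quotient is again a torus (not a Klein bottle), giving $C/\{\pm1\}\cong T^2$ and $\Lambda^*_{\mg}\cap\N\cong T^2\times\R$. I expect the genuine difficulty to be precisely the bookkeeping in (a)–(b): making the Lorentzian normal-form analysis sharp enough to see which pairs $(h,\psi)$ actually occur, and verifying that "no idempotent" is the exact condition excluding the non-torus cases, not merely a sufficient one.
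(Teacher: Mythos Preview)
Your Steps~1 and~2 are sound; the Veronese parametrisation is in fact a cleaner route to $\N\cap S^5\cong\R P^3$ than the paper's appeal to the fundamental group. The gap is Step~3. You propose to show that $C$ is a graph over the angular torus via a case analysis of the pair $(h,\psi)$, but you do not carry it out, and there is no evident reason to expect this coordinate-dependent statement to hold across the whole moduli of admissible Lorentzian forms. Your translation of ``idempotent'' in (b) is also wrong: if $[x,A_{\mg}^{-1}x]=x$ then the first integrals $\tr(\ad_{u(t)}^m)$ are constant along the ray $u(t)=(1-t)^{-1}x$, forcing $\tr(\ad_x^m)=0$ for all $m$; so an idempotent is \emph{nilpotent}, not semisimple and conjugate to $\tfrac12\diag(1,-1)$. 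Without a correct algebraic characterisation of idempotents you cannot hope to match that hypothesis to a boundary in your normal-form space.

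The paper bypasses all of this with one dynamical observation you are missing. The Euler field $F_{\mg}(x)=[x,A_{\mg}^{-1}x]$ is tangent to both $\Lambda^*_{\mg}$ and $\N$, because $\mg^*(x,x)$ and the functions $\tr(\ad_x^m)$ are first integrals of the Euler equation; hence $F_{\mg}$ is tangent to $M\times\R$, where $M=C/\{\pm1\}$ is your spherical slice. Transversality of the two cones is equivalent to $F_{\mg}\ne0$ on the intersection (see the paper's preliminaries), and ``no idempotent'' says precisely that $F_{\mg}(x)$ is never a nonzero multiple of $x$ there. Together, the component $F_M$ of $F_{\mg}$ tangent to $M$ is nowhere vanishing, and Poincar\'e--Hopf gives $\chi(M)=0$. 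Since your Step~2 already shows $M$ is a closed orientable surface, this yields $M\cong T^2$ (modulo connectedness, which neither argument actually addresses) with no normal-form bookkeeping at all. The bridge between the idempotent hypothesis and the topology is the Euler field, not the shape of $Q$.
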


The notion of `limit sequence' or `limit curve' for a sequence of curves is widely used in general relativity (see e.g. \cite{ONe83}). In \cite{Yur92}, in order to prove that there exists an incomplete closed geodesic in any incomplete compact Lorentzian manifold, it was taken as a fact that a limit curve of any sequence of incomplete geodesics in a compact Lorentzian manifold is incomplete and closed. However, a counterexample was given in \cite{RomSan93} showing that the limit curve can be complete. The closedness of the limit curve was also argued in \cite{RomSan93} `unrigorously' without providing any example. In Subsection~\ref{limit-curve}, we show that there are examples satisfying the above assumptions and have non-closed limit curves.

The paper is organised as follows: In section~\ref{section2}, we provide preliminaries and set the notations. Section \ref{section3} is for the study of invariant Lorentzian metrics on simple Lie groups and the proof of Theorem~\ref{MainTheorem1}. We consider invariant Lorentzian metrics on $SL_2(\C)$, and prove Theorems~\ref{MainTheorem2} and \ref{MainTheorem3}, in Section~\ref{section4}.

A sequel paper is under consideration by the authors to generalize the results of Section~\ref{section4} on $SL_2(\C)$ to $n$-dimensional special group $SL_n(\C)$ for arbitrary $n$. 

%%%%%%%%%%%%%%%%%%%%%%%%%%%%%%%%%%
%%%%%%%%%       SECTION-2   %%%%%%%%%%%%%
%%%%%%%%%%%%%%%%%%%%%%%%%%%%%%%%%%

\section{Preliminaries}\label{section2}
The content of this section can be found more detailed in \cite{BroMed08, CheEbi96, ColMcG93, LauPicVan13, Tho14}.

%%%%%%%%%       SUB-SECTION-2.1   %%%%%%%%%%%%%

\subsection{Lorentzian manifolds}\label{lorentz-manifolds}
Here we give a brief review of some basic notations in Lorentzian geometry, even though most of them are defined in the general setting of semi-Riemannian manifolds. In this section we take $(M,\mg)$ to be a (time-orientable) Lorentzian manifold.

According to \cite{BeeEhrEas96}, $(M , \mg)$ is said to be \textit{strongly causal} if each $p \in M$ has arbitrarily small neighborhoods such that no
causal curve crosses any of these neighborhoods more than once. 

A causal curve $\gamma: [0,b) \to M$ (i.e., $\gamma$ is non-spacelike) is \textit{future directed} if for every $t\in I$, $\gamma'(t)$ is tangent to the future cone in $T_{\gamma(t)}M$.  The curve $\gamma$ is said to be \textit{future imprisoned} in a compact subset $L \subset M$ if there exists some $0<t_0<b$, such that $\gamma([t_0,b)) \subset L$. It is  \textit{partially future imprisoned} in $L$ if $\gamma(t_m) \in L$ for some increasing sequence $t_m \nearrow b$ in $[0,b)$.

\begin{proposition}[\cite{BeeEhrEas96}]\label{imprisoned}
If $(M,\mg)$ is strongly causal, then no inextendible causal curve can be partially future imprisoned in any compact
set.
\end{proposition}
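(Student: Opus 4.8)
The plan is to argue by contradiction, using strong causality to trap the curve inside a single small causally convex chart and then a direct length estimate to force a future endpoint. Suppose $\gamma\colon[0,b)\to M$ is a future-directed, future-inextendible causal curve (future-inextendibility is all we need) that is partially future imprisoned in a compact set $L$, so $\gamma(t_m)\in L$ for some sequence $t_m\nearrow b$. By compactness of $L$ we may pass to a subsequence and assume $\gamma(t_m)\to p$ for some $p\in L$.

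The first step is to upgrade partial imprisonment near $p$ to genuine imprisonment in a controlled neighborhood. Strong causality means exactly that $p$ has a neighborhood basis of \emph{causally convex} open sets, i.e.\ open sets $U$ with $\gamma^{-1}(U)$ connected for every causal curve $\gamma$. I would choose such a $U\ni p$ that is moreover contained in a convex normal neighborhood $N$ of $p$ with $\overline U\subseteq N$ compact, and small enough that in normal coordinates $(x^0,\dots,x^{n-1})$ on $N$ — chosen so that $\partial_{x^0}$ is future-directed timelike — the metric components stay uniformly close to the Minkowski values $\eta_{\mu\nu}$ on $\overline U$. Since $\gamma(t_m)\to p$, infinitely many $t_m$ lie in the connected set $\gamma^{-1}(U)\subseteq[0,b)$, and as these tend to $b$ we get $(\alpha,b)\subseteq\gamma^{-1}(U)$ for some $\alpha<b$. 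Thus $\gamma\big((\alpha,b)\big)\subseteq U\subseteq\overline U$, so $\gamma$ is future imprisoned in the compact set $\overline U$.

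The second step shows this is impossible. On $(\alpha,b)$ the curve runs inside $U$, where the metric is close to Minkowski; hence for the nonzero causal vector $\gamma'(t)$, future-directedness forces a strictly positive $x^0$-component, so $t\mapsto x^0(\gamma(t))$ is strictly increasing, and it is bounded because $\gamma$ stays in the compact set $\overline U$. Therefore $x^0(\gamma(t))$ has a finite limit as $t\to b$, so its total variation along $\gamma|_{(\alpha,b)}$ is finite. The causal condition $\mg(\gamma',\gamma')\le 0$ together with closeness to Minkowski also yields a bound $\big|\tfrac{d}{dt}x^i(\gamma(t))\big|\le C\,\tfrac{d}{dt}x^0(\gamma(t))$ for each spatial index $i$, with $C$ depending only on the Minkowski-closeness on $\overline U$; integrating over $(\alpha,b)$ shows $\gamma|_{(\alpha,b)}$ has finite Euclidean length in the chart $N$. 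A curve of finite length converges to an endpoint, so $\gamma(t)\to q$ for some $q\in\overline U\subseteq N$ as $t\to b$. But then $q$ is a future endpoint of $\gamma$, contradicting future-inextendibility.

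I expect the main obstacle to be making the last step rigorous: one has to pin down what ``the metric is close to Minkowski'' buys quantitatively and then run the length estimate carefully, and one must do this for causal curves in their natural regularity (only absolutely continuous, with $\gamma'$ causal almost everywhere) rather than for smooth curves. A more classical alternative to the length estimate is to extract a limit curve of the imprisoned tail $\gamma|_{[\alpha,b)}$ inside $\overline U$ and show it yields a closed, or arbitrarily nearly closed, causal curve through a point of $\overline U$, contradicting strong causality; but that route is technically heavier, so the finite-length argument above is the one I would carry out.
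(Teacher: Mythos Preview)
The paper does not prove this proposition; it is quoted without proof from Beem--Ehrlich--Easley, so there is no argument in the paper to compare yours against.

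Your proof is correct. The decisive step is the one you isolate first: causal convexity of $U$ forces $\gamma^{-1}(U)$ to be an interval, and since it contains a sequence $t_m\nearrow b$ it must contain a whole tail $(\alpha,b)$, upgrading partial imprisonment to genuine imprisonment in $\overline U$. Your endpoint argument via a finite-length estimate in a near-Minkowski chart is a clean, self-contained way to finish; the treatment in the cited reference tends instead to go through an auxiliary complete Riemannian metric and a local time function (or through the limit-curve machinery you mention as an alternative), but the substance is the same. The regularity worry you flag is harmless here: once $\gamma$ is trapped in $\overline U$ you may reparametrize by the strictly increasing coordinate $x^0$, after which the spatial components are Lipschitz with a constant depending only on how close $\mg$ is to $\eta$ on $\overline U$, and the finite-length conclusion follows without any smoothness assumption on $\gamma$.
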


A smooth curve $\gamma: I \to M$ is a \textit{geodesic} if it satisfies the equation $\nabla_{\gamma'} \gamma' = 0$, where $\nabla$ stands for the Levi-Civita connection of $\mg$. If $\gamma$ is a geodesic then $\mg(\gamma(t) , \gamma(t))$ is constant for all $t \in I$.

According to \cite{CanSan08}, a geodesic $\gamma:[0,b) \to M$, $b<+\infty$, is extendible beyond $b$ if and only if $|\gamma'|_R = \mg_R(\gamma'(t) , \gamma'(t))$ is bounded for some, hence any, complete Riemannian metric $\mg_R$, equivalently, if there exists an increasing sequence $t_m \nearrow b$ such that $\{\gamma'(t_m)\}$ converges in $TM$. 
 
A vector field $X\in \X(M)$ is called spacelike, timelike, lightlike or causal, if $X_p$ has that characteristic for every $p \in M$. A \textit{Killing} vector field on $M$ is a vector field $X$ such that $\L_X\mg = 0$, where $\L_X$ is the Lie derivation along $X$. Equivalently, $X$ is Killing if its local flows are isometries. If $X$ is Killing and $\gamma:I\to M$ is a geodesic then $\mg(X_{\gamma(t)} , \gamma'(t))$ is constant for all $t \in I$. 

Hereafter any Killing vector field will be non-zero.

%%%%%%%%%      SUB-SECTION-2.2   %%%%%%%%%%%%%

\subsection{Limit curves}\label{limit-curves}
For a sequence $\{\gamma_n\}$ of smooth curves in a smooth manifold $M$, the notion of a limit curve is defined as follows: a curve $\gamma$ in $M$ is called a \textit{limit curve} of $\{\gamma_n \}$ if there exists a subsequence $\{\gamma_m \}$ such that every neighbourhood of each point $p \in \gamma$ intersect all, but possibly finite number, of the curves in $\{\gamma_m\}$ (\cite{BeeEhrEas96},~Ch.~3,~3.28).  

In semi-Riemannian manifolds, a geodesic is uniquely determined by its initial velocity. This fact leads to the existence of a distinguished limit curve for a sequence of geodesics. 
\begin{proposition}[\cite{RomSan93}]\label{limit-geodesic}
Let $(M , \mg)$ be a Lorentzian manifold, and for $m = 1, 2, \ldots$, $\gamma_m : [0 , b_m) \to M$, $b_m \leq +\infty$, be a sequence of geodesics such that $\gamma_m$ is inextendible beyond $b_m$. If $\gamma'_m(0)$ converges to $x$ in $TM$, then the geodesic in $M$ with initial velocity $x$ is a limit curve of $\{ \gamma_m \}$.
\end{proposition}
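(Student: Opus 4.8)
The plan is to deduce the statement from two standard facts about the geodesic flow viewed as a flow on $TM$: smooth dependence of the solutions of the geodesic ODE on their initial data, and lower semicontinuity of the maximal existence time. Write $\Phi\colon\mathcal{D}\to TM$ for the geodesic flow, where $\mathcal{D}\subseteq\R\times TM$ is its (open) domain, containing $\{0\}\times TM$; for $v\in TM$ the curve $t\mapsto\pi(\Phi_t v)$ (with $\pi\colon TM\to M$ the bundle projection) is the maximal geodesic with initial velocity $v$, defined on a maximal interval whose right endpoint we call $\tau_+(v)\in(0,+\infty]$. Set $v_m:=\gamma_m'(0)$, $x:=\lim_m v_m$, and let $\gamma\colon[0,b)\to M$ with $b:=\tau_+(x)$ be the maximal geodesic having $\gamma'(0)=x$. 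By uniqueness of geodesics with a prescribed initial velocity, the assumption that $\gamma_m$ is inextendible beyond $b_m$ forces $b_m=\tau_+(v_m)$ for every $m$.

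The first key step is the inequality $\liminf_m b_m\ge b$, which is precisely lower semicontinuity of $\tau_+$: if $t<b=\tau_+(x)$ then $(t,x)\in\mathcal{D}$, and since $\mathcal{D}$ is open and $v_m\to x$ we get $(t,v_m)\in\mathcal{D}$, i.e.\ $b_m=\tau_+(v_m)>t$, for all sufficiently large $m$. (Alternatively, were $\limsup_m b_m<b$ along a subsequence, one could use the characterisation of extendibility from \cite{CanSan08} recalled above, together with convergence of the initial velocities on the compact intervals $[0,b_m]$, to bound $|\gamma_m'|_R$ for a complete Riemannian metric $\mg_R$ and thereby contradict inextendibility; but the openness argument is cleaner.) In particular $\gamma_m(t_0)$ is defined for all large $m$, for each fixed $t_0\in[0,b)$.

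The second key step is continuity. Fix $t_0\in[0,b)$; then $(t_0,x)\in\mathcal{D}$ and $(t_0,v_m)\to(t_0,x)$ with $(t_0,v_m)\in\mathcal{D}$ for large $m$, so continuity of $\Phi$ gives $\Phi_{t_0}v_m\to\Phi_{t_0}x$ in $TM$, hence $\gamma_m(t_0)=\pi(\Phi_{t_0}v_m)\to\pi(\Phi_{t_0}x)=\gamma(t_0)$ in $M$. Now take any $p\in\gamma$, say $p=\gamma(t_0)$, and any neighbourhood $U$ of $p$ in $M$: by the previous line $\gamma_m(t_0)\in U$ for all sufficiently large $m$, so all but finitely many of the curves $\gamma_m$ meet $U$. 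Since $p$ and $U$ were arbitrary, $\gamma$ is a limit curve of $\{\gamma_m\}$; here one may take the subsequence in the definition of limit curve to be the entire sequence.

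There is no deep obstacle, the argument being essentially bookkeeping around the ODE-theoretic input; the single point that must be treated carefully is $\liminf_m b_m\ge b$, which is exactly where the hypothesis that each $\gamma_m$ be inextendible beyond $b_m$ is used — without maximality the endpoints $b_m$ could be artificially small and the statement would be false. A minor further caveat is that neighbourhoods in the definition of limit curve are taken in $M$, so tracking the base-point convergence $\gamma_m(t_0)\to\gamma(t_0)$ suffices and there is no need for full $C^1$-convergence of $\gamma_m$ to $\gamma$.
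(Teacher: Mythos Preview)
The paper does not supply its own proof of this proposition; it is quoted from \cite{RomSan93} in the preliminaries and used as a black box. Your argument is correct and is precisely the standard one: openness of the domain of the geodesic flow gives $\liminf_m b_m\ge b$, and continuity of the flow gives $\gamma_m(t_0)\to\gamma(t_0)$ for each $t_0\in[0,b)$, which is exactly what the definition of limit curve requires. Your remark that the inextendibility hypothesis is used only to identify $b_m$ with $\tau_+(v_m)$, and hence to secure the lower-semicontinuity step, is also on point.
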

 
%%%%%%%%%      SUB-SECTION-2.3   %%%%%%%%%%%%%

\subsection{Semisimple Lie algebras and the Euler equation}\label{semisimple-algebras-euler-eq}

Let $G$ be a (real) Lie group and $\g{g}$ denotes its Lie algebra which is the vector space $\X_L(G)$ of left-invariant vector fields on $G$, equipped with the commutator bracket. Given vector fields $X, Y, Z, \ldots \in \X_L(G)$, we use the lowercase $x, y, z, \ldots$ to denote the corresponding element $X_e, Y_e, Z_e, \ldots \in T_eG \cong \g{g}$. Since left-invariant vector fields are uniquely determined by their values at the identity, throughout the paper we identify any $X, Y, Z, \ldots \in \X_L(G)$ with $x, y, z, \ldots$ and use them interchangeably whenever appropriate.

Cartan's criterion states that $\g{g}$ is semisimple if and only if its Killing form $\K:\g{g}\times \g{g} \to \R$ defined by $\K(x,y)=\tr(\ad_x \circ \ad_y)$ is non-degenerate. The Killing form is a symmetric bilinear form invariant under all automorophisms of $\g{g}$. Another equivalent condition on $\g{g}$ to be semisimple is that $\g{g}$ has no no-zero abelian ideal. If $\g{g}$ has no non-zero ideal at all, then it is by definition a simple Lie algebra. The following theorem, which is a special case of Corollary~2.3 in \cite{Ver88}, states that, in general, a real simple Lie algebra does not have `large' subalgebras.
\begin{theorem}[\cite{Ver88}]\label{simple-c1-subalgebra}
Let $\g{g}$ be a real simple Lie algebra. Then $\g{g}$ has a codimension one subalgebra if and only if it is isomorphic to $\g{sl}_2(\R)$.
\end{theorem}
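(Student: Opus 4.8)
The plan is to prove the two directions of the equivalence separately; the substantial one is that a codimension one subalgebra forces $\g{g}\cong\g{sl}_2(\R)$. The converse is immediate: inside $\g{sl}_2(\R)$ the upper-triangular trace-zero matrices form a $2$-dimensional solvable subalgebra, hence one of codimension one. For the forward direction, assume $\g{g}$ is real simple and $\g{h}\subset\g{g}$ is a subalgebra with $\dim(\g{g}/\g{h})=1$. Since $\g{h}\neq\g{g}$ and $\g{g}$ is simple, the only ideal of $\g{g}$ contained in $\g{h}$ is $\{0\}$; that is, $\g{h}$ is core-free.

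I would exploit core-freeness geometrically. Pick a connected Lie group $G$ with Lie algebra $\g{g}$ and let $H\leq G$ be the connected subgroup with Lie algebra $\g{h}$; near its base point the coset space $G/H$ is a $1$-dimensional manifold, and the infinitesimal $G$-action on it yields a Lie algebra homomorphism $\rho\colon\g{g}\to\X(I)$, with $I$ an open interval, whose kernel is the largest ideal of $\g{g}$ contained in $\g{h}$. By core-freeness $\rho$ is injective, and it is transitive: $x\mapsto\rho(x)_q$ maps onto $T_qI$ for every $q\in I$. It then suffices to show $\dim\g{g}\leq 3$. Indeed, simplicity would then force $\dim\g{g}=3$; the only $3$-dimensional real simple Lie algebras are $\g{sl}_2(\R)$ and the compact $\g{su}(2)$, and $\g{su}(2)$ has no $2$-dimensional subalgebra --- an abelian one is impossible since $\g{su}(2)$ has rank $1$, and a non-abelian one would contain an element whose $\ad$ has a nonzero real eigenvalue, which is impossible because the $\ad$-operators of $\g{su}(2)$ have purely imaginary spectrum --- so the existence of $\g{h}$ would leave $\g{g}\cong\g{sl}_2(\R)$.

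To bound the dimension, fix $p\in I$; by transitivity and the flow-box theorem we may choose a coordinate $t$ near $p$ with $\partial_t\in\rho(\g{g})$ and write the elements of $\rho(\g{g})$ as $f\,\partial_t$. Since $[\partial_t,f\,\partial_t]=f'\,\partial_t$, the finite-dimensional function space $P=\{f:f\,\partial_t\in\rho(\g{g})\}$ is closed under differentiation, so every $f\in P$ satisfies a linear constant-coefficient ODE and hence is real-analytic. Let $\g{g}_0=\{f\,\partial_t\in\rho(\g{g}):f(p)=0\}$ be the isotropy subalgebra, of codimension one in $\g{g}$. The crux is the claim that no nonzero element of $\g{g}_0$ vanishes to order $\geq 3$ at $p$. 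If $f_0\,\partial_t$ did, with $f_0$ vanishing to order $m\geq 3$ (finite, since $f_0$ is analytic and nonzero), then $g:=f_0^{(m-2)}\in P$ vanishes to order exactly $2$, and the identity $[g\,\partial_t,\phi\,\partial_t]=(g\phi'-\phi g')\,\partial_t$ shows that bracketing with $g\,\partial_t$ raises the order of vanishing by exactly $1$ as long as that order exceeds $2$; iterating from $\phi=f_0$ produces elements of $P$ vanishing to orders $m+1,m+2,m+3,\dots$, which are linearly independent, contradicting $\dim P<\infty$. Therefore the $2$-jet map $\g{g}_0\to\R^2$, $f\,\partial_t\mapsto(f'(p),f''(p))$, is injective, so $\dim\g{g}_0\leq 2$ and $\dim\g{g}=\dim\g{g}_0+1\leq 3$, as needed.

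The step I expect to be the main obstacle is this dimension bound, which is in essence Lie's classification of the finite-dimensional transitive Lie algebras of vector fields on a line (the three possibilities being $\langle\partial_t\rangle$, $\langle\partial_t,t\partial_t\rangle$ and $\langle\partial_t,t\partial_t,t^2\partial_t\rangle\cong\g{sl}_2(\R)$). What makes it tractable is the analyticity observation, which collapses the argument to a finite jet computation; the order-of-vanishing bookkeeping inside the bracket is then the heart of the matter. The remaining ingredients --- core-freeness, the descent to $G/H$, the final dimension count, and the elimination of $\g{su}(2)$ --- are routine.
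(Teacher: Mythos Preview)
The paper does not prove this statement: it is quoted in the preliminaries as a special case of Corollary~2.3 of \cite{Ver88}, with no argument supplied, so there is no in-paper proof to compare against.

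Your argument is correct and self-contained. It amounts to Lie's classical theorem that a finite-dimensional transitive Lie algebra of vector fields on a one-manifold has dimension at most $3$, applied to the effective infinitesimal action of $\g{g}$ on the local one-dimensional quotient $G/H$; core-freeness of $\g{h}$ (forced by simplicity) makes that action faithful. The key technical steps --- that $P$ is closed under $d/dt$, hence consists of real-analytic functions, and the order-of-vanishing computation showing $\ad_{g\partial_t}$ raises the vanishing order by exactly one when that order exceeds $2$ --- are both sound; your leading-coefficient check gives $(k-2)ab\,t^{k+1}$, nonzero for $k>2$. Two cosmetic remarks if you want to tighten the write-up: the fundamental-vector-field map for a left action is an \emph{anti}-homomorphism, which is harmless for the dimension count; and identifying $\ker\rho$ with the core of $\g{h}$ after restricting to a local transversal $I$ implicitly uses analyticity of $g\mapsto\Ad_{g^{-1}}x$ on the connected group $G$. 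Neither affects the conclusion. The elimination of $\g{su}(2)$ at the end is fine.
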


For the rest of the section $\g{g}$ is a semisimple Lie algebra unless otherwise is stated. The Killing form allows us to identify $\g{g}$ and its dual $\g{g}^*$ by using the correspondence $x\mapsto \K(x,\cdot)$. Here $\ad:\g{g} \to \g{gl}(\g{g})$ defined by $\ad_x(z) := [x,z]$, is the \textit{adjoint representation} of $\g{g}$ on itself. The adjoint representation of the corresponding Lie group, $\Ad : G \to Aut(\g{g})$, is given by $\Ad_g(X) = gXg^{-1}$, when $\g{g}$ is given as $\g{gl}(E)$ for some (complex) linear space $E$.

An element $x \in \g{g}$ is called \textit{semisimple}, respectively \textit{nilpotent}, if the adjoint operator $\ad_x$ is semisimple, respectively nilpotent. Recall that $\ad_x$ is semisimple if it is diagonalizable; it is nilpotent if $\ad_x^m = 0$ for some positive integer $m$. Moreover, an element $x \in \g{g}$ is called \textit{compact} if the one-parameter subgroup $\Ad(\exp(tx))$ lies in a commutative compact subgroup of $\Ad(G)$. Equivalently, $x$ is compact if the eigenvalues of $\ad_x$ are all pure imaginary.

The set of nilpotents in $\g{g}$, denoted by $\N$, is invariant under the adjoint action of $G$ on $\g{g}$ defined by $g\cdot x = \Ad_g(x)$. So $\N$ is the union of all nilpotent orbits of the adjoint action. We refer to $\N$ as the \textit{nilpotent cone}, for it clearly contains the radial line $\R x$ for any $x\in \N$. One can see that the tangent space to any orbit $O(x) \subset \N$ is given by
\begin{equation}\label{eq00}
T_x(O(x)) = T_x \Ad_G(x) = \{\ad_y(x) : y \in \g{g} \} = [x , \g{g}], \ \ \forall x \in \N .
\end{equation}

Let $\mg$ be a left-invariant semi-Riemannian metric on $G$. Using the correspondence between left-invariant tensor fields on $G$ and tensors on its tangent space at the identity element, $T_eG \cong \g{g}$, one may think of $\mg$ as a non-degenerate symmetric bilinear form on $\g{g}$. Then, associated to $\mg$, there is a unique $\K$-symmetric isomorphism $A_{\mg}$ on $\g{g}$ such that $\mg(x,y)=\K(x,A_{\mg}y)$ for all $x, y \in \g{g}$. We denote by $\mg^*$ the induced bilinear form on $\g{g}^*$. Identifying $\g{g}$ and $\g{g}^*$ as above, the isomorphism associated to $\mg^*$ is $A_{\mg}^{-1}$, that is, $\mg^*(x,y)= \K(x,A_{\mg}^{-1}y)$ for all $x, y \in \g{g}$. 

We denote by $\Lambda^*_{\mg}$ the null cone determined by $\mg^*$ in $\g{g}$:
\[
\Lambda^*_{\mg} = \{x \in \g{g} : \mg^*(x,x) = 0, \ \ x\neq 0 \}  = \{x \in \g{g} : \K(x,A_{\mg}^{-1}x) = 0,  \ \ x\neq 0 \} .
\]
The null cone is a hypersurface of $\g{g}$ and its tangent space at $x \in \Lambda^*_{\mg}$ is given by
\begin{equation}\label{eq01}
T_x \Lambda^*_{\mg} = \{y \in \g{g} : \mg^*(x,y) = 0 \}.
\end{equation}

From \eqref{eq00} and \eqref{eq01} and using $\K$-symmetry of $A_{\mg}^{-1}$ one can see that the two cones $\N$ and $\Lambda^*_{\mg}$ are transversal at $x \in \Lambda^*_{\mg} \cap \N$ if and only if $[x,A_{\mg}^{-1}x] \ne 0$. 

The position of the cones $\N$ and $\Lambda^*_{\mg}$ with respect to each other is related to the metric completeness:
\begin{proposition}[\cite{Tho14}]\label{disjoint-cones-complete}
Let $\mg$ be a left-invariant semi-Riemannian metric on a semisimple Lie group $G$. If $\N$ and $\Lambda^*_{\mg}$ are disjoint, then $\mg$ is complete.
\end{proposition}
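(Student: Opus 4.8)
The plan is to prove the contrapositive: assuming $\mg$ is incomplete, I will exhibit a nonzero element of $\N\cap\Lambda^*_{\mg}$. Since the integral curves of the Euler field $F_{\mg}$ in $\g{g}$ are complete exactly when the geodesics of $\mg$ are, incompleteness gives a maximal integral curve $x\colon(a,b)\to\g{g}$ of $F_{\mg}$ with $b<+\infty$ (the case $a>-\infty$ being entirely analogous, since $-F_{\mg}$ has the same first integrals). First I would invoke the escape lemma for the smooth (indeed polynomial) vector field $F_{\mg}$: if $\sup_{[t_0,b)}\lVert x(t)\rVert$ were finite for some $t_0$ and some fixed auxiliary Euclidean norm on $\g{g}$, then $F_{\mg}$ would be bounded along the trajectory, $x$ would be Lipschitz near $b$, hence extend continuously to $b$ and be prolongable past $b$, contradicting maximality. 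Therefore $\limsup_{t\to b^-}\lVert x(t)\rVert=+\infty$, so I can pick $t_n\nearrow b$ with $\lVert x(t_n)\rVert\to\infty$; setting $u_n:=x(t_n)/\lVert x(t_n)\rVert$ on the compact unit sphere of $\g{g}$ and passing to a subsequence, $u_n\to v$ with $\lVert v\rVert=1$, in particular $v\neq0$.

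Next I would identify $v$ as lying in $\N\cap\Lambda^*_{\mg}$ using the two families of first integrals of the Euler equation. Since $\mg^*(x(t),x(t))$ is constant along $x$, say equal to $c$, homogeneity of degree two of $\mg^*(\cdot,\cdot)$ gives
\[
\mg^*(v,v)=\lim_{n\to\infty}\mg^*(u_n,u_n)=\lim_{n\to\infty}\frac{c}{\lVert x(t_n)\rVert^2}=0,
\]
so $v\in\Lambda^*_{\mg}$. Since each $\tr(\ad_{x(t)}^m)$ is constant along $x$, Newton's identities (we are in characteristic zero) force every coefficient of the characteristic polynomial of $\ad_{x(t)}$ to be independent of $t$; write it as $\lambda^N+c_{N-1}\lambda^{N-1}+\dots+c_0$ with $N=\dim\g{g}$. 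Using linearity of $\ad$, the characteristic polynomial of $\ad_{u_n}=\lVert x(t_n)\rVert^{-1}\ad_{x(t_n)}$ equals $\lambda^N+\lVert x(t_n)\rVert^{-1}c_{N-1}\lambda^{N-1}+\dots+\lVert x(t_n)\rVert^{-N}c_0$, which tends to $\lambda^N$. As the coefficients of the characteristic polynomial depend continuously on the operator, $\ad_v$ has characteristic polynomial $\lambda^N$, hence is nilpotent; since $v\neq0$ this means $v\in\N$. Thus $v\in\N\cap\Lambda^*_{\mg}$, contradicting disjointness, and $\mg$ must be complete.

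I expect no single genuinely hard step, but two places that need care. The first is the exact form of the escape lemma: one obtains only a \emph{sequence} $t_n\nearrow b$ along which the norm blows up (the normalized curve need not converge), which is why I pass to a subsequence of directions and extract just one limit point $v$ — happily, a single point of $\N\cap\Lambda^*_{\mg}$ already contradicts the hypothesis. The second is checking that the rescaling interacts correctly with the invariants, which works precisely because $\mg^*(\cdot,\cdot)$ is homogeneous of degree $2$ in $x$ while the $k$-th coefficient of the characteristic polynomial of $\ad_{(\cdot)}$ is homogeneous of degree $k$; on the ``sphere at infinity'' this simultaneously squeezes the limiting direction to be null and to be nilpotent. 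Everything else is routine bookkeeping.
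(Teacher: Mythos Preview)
Your proof is correct. The paper does not supply its own proof of this proposition (it is quoted from \cite{Tho14}), but the very argument you give---normalize an unbounded solution of the Euler equation along a sequence $t_m\nearrow b$, extract a unit limit $\theta$, and use the first integrals $\mg^*(x,x)$ and $\tr(\ad_x^m)$ to force $\theta\in\Lambda^*_{\mg}\cap\N$---is exactly the technique the paper employs in the proof of Proposition~\ref{timelike-killing}, so your approach coincides with the paper's implicit one.
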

 
Given a curve $\gamma(t)$ in $G$, one can define a curve $u(t)$ in $\g{g}$ by $u(t):=(dL_{\gamma(t)})^{-1}(\gamma'(t))$ where for each $g\in G$, $L_g$ is the left translation by $g$ in $G$. We may sometimes refer to $u(t)$ as the \textit{reflection} of $\gamma(t)$ in $\g{g}$, or, call $\gamma(t)$ the reflection geodesic of $u(t)$. For any vector field $Y$ along $\gamma$ the covariant derivative $\nabla_{\gamma'(t)}Y(t)$ of $Y$ with respect to the Levi-Civita connection $\nabla$ of $\mg$ on $G$, and the differential of the curve $v(t)=(dL_{\gamma(t)})^{-1}(Y(t))$ in $\g{g}$ are related by the following formula:
\begin{equation}\label{eq1}
(dL_{\gamma(t)})^{-1}\left(\nabla_{\gamma'(t)}Y(t)\right)=v'(t) + \nabla_{u(t)} v(t) . 
\end{equation}
where, for $x, y \in \g{g}$, $\nabla_x y$ is defined by considering $x$ and $y$ as left-invariant vector fields on $G$ and then evaluating $\nabla_x y$ at the identity element. So, in particular, the curve $\gamma(t)$ is a geodesic of $G$ (w.r.t $\mg$) if and only if $u'(t)=-\nabla_{u(t)}u(t)$. Using \eqref{eq1} and the connection formula (\cite{CheEbi96}, Ch.~3,~3.18) given by 
\begin{equation}\label{eq2}
\nabla_x y =\frac{1}{2}\left\lbrace [x,y]-(\ad_x)^*y-(\ad_y)^*x \right\rbrace, \quad x, y \in \g{g} ,
\end{equation}
with $(\ad_x)^*= - A_{\mg}^{-1} \circ \ad_x \circ A_{\mg}$ being the transpose of $\ad_x$ w.r.t $\mg$, and a change of variable, the equation $u'(t)=-\nabla_{u(t)}u(t)$ becomes, 
\begin{equation}\label{euler-equation}
u'(t) = \left[u(t),A_{\mg}^{-1}u(t)\right] .
\end{equation}
Thus, $\gamma(t)$ is a geodesic in $G$ if and only if $u(t)$ and $A_{\mg}^{-1}u(t)$ satisfy \eqref{euler-equation}. Equation \eqref{euler-equation} is referred to as the \textit{Euler equation} and, accordingly, the vector field on $\g{g}$ defined by $F_{\mg}: x\mapsto  [x,A_{\mg}^{-1}x]$, which is a homogeneous quadratic vector field, is called the \textit{Euler field}.  So completeness of the metric $\mg$ can be reformulated as follows.
\begin{theorem}[\cite{AlePut90}]\label{completeness-equivalences}
Let $(G,\mg)$ be a semi-Riemannian semisimple Lie group. Then, the followings are equivalent:
\begin{itemize}
    \item[(i)] the metric $\mg$ is (geodesically) complete,
    \item[(ii)] solutions of the Euler equation \eqref{euler-equation} are complete,
    \item[(iii)] the Euler field $F_{\mg}$ is complete.
\end{itemize}
In particular, if solutions of \eqref{euler-equation}, or equivalently, integral curves of $F_{\mg}$ are bounded, then $\mg$ is complete.
\end{theorem}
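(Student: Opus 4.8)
The plan is to exploit the correspondence $\gamma(t)\mapsto u(t)=(dL_{\gamma(t)})^{-1}(\gamma'(t))$ between curves in $G$ and curves in $\g{g}$. First note that (ii) and (iii) coincide by definition: the solutions of the Euler equation \eqref{euler-equation} are exactly the integral curves of the quadratic field $F_{\mg}\colon x\mapsto[x,A_{\mg}^{-1}x]$. The content is therefore the equivalence of (i) with (ii)/(iii). The discussion preceding \eqref{euler-equation} already supplies the key link: $\gamma$ is a geodesic of $(G,\mg)$ if and only if its reflection $u$ solves \eqref{euler-equation}; conversely, a curve $u\colon J\to\g{g}$ determines $\gamma$ by solving the reconstruction equation $\gamma'(t)=dL_{\gamma(t)}(u(t))$ with a prescribed initial point. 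Since $\mg$ is left-invariant, every $L_g$ carries geodesics to geodesics and leaves reflections unchanged, so it suffices to treat geodesics issuing from $e$; modulo the reconstruction step, $\gamma\mapsto u$ is then a domain-preserving bijection between geodesics through $e$ and integral curves of $F_{\mg}$.

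For (i)$\Rightarrow$(iii): take a maximal integral curve $u$ of $F_{\mg}$ with $u(0)=u_0$ and let $\gamma$ be the inextendible geodesic with $\gamma(0)=e$, $\gamma'(0)=u_0$. Completeness of $\mg$ gives $\gamma\colon\R\to G$; its reflection is an integral curve of $F_{\mg}$ defined on all of $\R$ and equal to $u_0$ at $t=0$, so by uniqueness for the smooth ODE \eqref{euler-equation} it agrees with $u$ on the domain of $u$. Maximality then forces $u$ to be defined on $\R$, so $F_{\mg}$ is complete.

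For (iii)$\Rightarrow$(i): let $\gamma\colon[0,b)\to G$ be an inextendible geodesic with $b<\infty$. Its reflection $u$ is an integral curve of $F_{\mg}$, so completeness of $F_{\mg}$ lets $u$ extend smoothly to $[0,b]$; in particular $u$ is bounded on $[0,b)$. Fix a left-invariant Riemannian metric $h$ on $G$; by left-invariance the $h$-speed of $\gamma$ satisfies $|\gamma'(t)|_h=|dL_{\gamma(t)}u(t)|_h=|u(t)|_h$, which is bounded, so $\gamma$ is $h$-Lipschitz on the finite interval $[0,b)$ and hence $h$-Cauchy as $t\to b$. As left-invariant Riemannian metrics on $G$ are geodesically, hence metrically, complete, $\gamma(t)$ converges to some $p\in G$ and $\gamma'(t)=dL_{\gamma(t)}u(t)\to dL_p\,u(b)$ in $TG$; by the extension criterion of \cite{CanSan08}, $\gamma$ is extendible beyond $b$, a contradiction, and a symmetric argument rules out a finite left endpoint. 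Thus $\mg$ is complete. The final clause is the escape lemma in $\g{g}\cong\R^{\dim\g{g}}$: a maximal integral curve of the smooth field $F_{\mg}$ with a finite-time endpoint leaves every compact set, so if the integral curves are bounded they are all defined on $\R$, i.e.\ $F_{\mg}$ is complete, and then (iii)$\Rightarrow$(i) applies.

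The only step beyond routine manipulation of \eqref{eq1}, \eqref{euler-equation} and ODE uniqueness is the passage in (iii)$\Rightarrow$(i) from "$u$ stays controlled" to "$\gamma$ actually extends in $G$" — equivalently, solvability of the reconstruction equation over the whole interval on which $u$ is defined. This is exactly where the group structure is essential (left translations are diffeomorphisms, and left-invariant Riemannian metrics are complete); once it is secured, the implications close up as (i)$\Rightarrow$(iii)$\Leftrightarrow$(ii)$\Rightarrow$(i).
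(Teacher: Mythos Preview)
The paper does not give its own proof of this theorem: it is stated with attribution to \cite{AlePut90} and used as a black box thereafter. Your argument is a correct and standard self-contained proof, and the ingredients you invoke (the reflection correspondence, ODE uniqueness, completeness of left-invariant Riemannian metrics, the escape lemma) are exactly the ones underlying the cited result. One small caveat: the paper's derivation of \eqref{euler-equation} from $u'=-\nabla_u u$ involves ``a change of variable'' (in effect $u\mapsto A_{\mg}u$, passing from velocity to momentum), so the reflection of a geodesic is not literally an integral curve of $F_{\mg}$ but is carried to one by the linear isomorphism $A_{\mg}$; since this is a fixed linear map it preserves domains of definition and boundedness, and your domain-preservation claims go through unchanged.
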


Having Theorem~\ref{completeness-equivalences} and implementing the equivalent conditions mentioned in Subsection~\ref{lorentz-manifolds} on a geodesic to be complete, one gets:
\begin{proposition}\label{completeness-criterion}
Let $(G,\mg)$ be a semi-Riemannian semisimple Lie group. Suppose that $u:[0,b) \to \g{g}$, $b<+\infty$, is a solution of the Euler equation. Then, the followings are equivalent:
\begin{itemize}
    \item[(i)] $u$ is extendible beyond $b$;
    \item[(ii)] $\|u(t)\|$ is bounded for some, hence any, norm $\| , \|$ obtained from a positive definite scalar product on $\g{g}$; 
    \item[(iii)] there exists a sequence $t_m \nearrow b$ in $[0,b)$ such that $\{u(t_m)\}$ converges in $\g{g}$.
\end{itemize}
\end{proposition}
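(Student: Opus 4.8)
The plan is to read Proposition~\ref{completeness-criterion} as a statement about the maximal solution of the autonomous ODE $u'=F_{\mg}(u)$ on the finite-dimensional vector space $\g{g}$, where $F_{\mg}(x)=[x,A_{\mg}^{-1}x]$ is a polynomial, hence smooth, vector field defined on all of $\g{g}$, and where $b<+\infty$ is used throughout. Geometrically this is just the translation, through the reflection correspondence $\gamma\leftrightarrow u$ of Subsection~\ref{semisimple-algebras-euler-eq}, of the extendibility criterion of \cite{CanSan08} recalled in Subsection~\ref{lorentz-manifolds}: fixing a left-invariant, hence complete, Riemannian metric $\mg_R$ on $G$ one has $\mg_R(\gamma'(t),\gamma'(t))=\|u(t)\|^2$ for the norm $\|\cdot\|$ of $\mg_R|_e$, so that boundedness of $|\gamma'|_R$ is precisely $(ii)$. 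I would nonetheless prefer to argue directly in $\g{g}$, proving the two equivalences $(i)\Leftrightarrow(ii)$ and $(i)\Leftrightarrow(iii)$.

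For $(i)\Rightarrow(ii)$ one simply notes that an extension of $u$ to some $[0,b']$ with $b'>b$ is continuous on the compact interval $[0,b]$, so $\|u\|$ is bounded there and a fortiori on $[0,b)$; equivalence of norms on the finite-dimensional $\g{g}$ then yields the ``some, hence any'' clause. For $(ii)\Rightarrow(i)$: if $\|u(t)\|\le C$ on $[0,b)$ then $u([0,b))$ lies in the compact ball $\{\|x\|\le C\}$, on which $F_{\mg}$ is bounded by some $M$; hence $\|u'(t)\|=\|F_{\mg}(u(t))\|\le M$, so $u$ is Lipschitz on the bounded interval $[0,b)$, the limit $u_b:=\lim_{t\to b^-}u(t)$ exists in $\g{g}$, and solving $u'=F_{\mg}(u)$ forward from $u_b$ at time $b$ (local existence) produces a smooth extension past $b$. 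One could equally invoke the escape lemma for ODEs on $\R^n$: a maximal solution with $b<+\infty$ must satisfy $\|u(t)\|\to+\infty$.

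The implication $(i)\Rightarrow(iii)$ is immediate (take $u(t_m)\to u(b)$ along the extension, for any $t_m\nearrow b$). For $(iii)\Rightarrow(i)$, assume $u(t_m)\to x_\infty$ with $t_m\nearrow b$; by local existence together with continuous dependence on initial data for $u'=F_{\mg}(u)$ there are a neighbourhood $U$ of $x_\infty$ and a $\delta>0$ such that through each point of $U$ the solution is defined for time $\delta$ on either side. Picking $m$ with $u(t_m)\in U$ and $t_m>b-\delta$, the solution starting at $u(t_m)$ at time $t_m$ lives on $(t_m-\delta,t_m+\delta)\ni b$ and, by uniqueness, agrees with $u$ on $[t_m,b)$, hence extends $u$ past $b$. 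This closes the cycle of implications.

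I do not expect a genuine obstacle here: the statement is a repackaging of the escape lemma and the openness of the flow domain of a smooth, globally defined vector field, together with norm equivalence on $\g{g}$ and completeness of left-invariant Riemannian metrics. The only point that deserves a line of care is the uniformity of the existence time used in $(iii)\Rightarrow(i)$ — that a single convergent subsequence $u(t_m)$ already prevents $u$ from blowing up at $b$ — which is exactly lower semicontinuity of the maximal existence time and costs nothing because $F_{\mg}$ is smooth on all of $\g{g}$.
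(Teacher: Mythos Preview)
Your proposal is correct and, in its first paragraph, is exactly the paper's own argument: the paper does not give a separate proof but simply states that Proposition~\ref{completeness-criterion} follows by translating the geodesic extendibility criterion of \cite{CanSan08} (recalled in Subsection~\ref{lorentz-manifolds}) to $\g{g}$ via the reflection correspondence of Subsection~\ref{semisimple-algebras-euler-eq} and Theorem~\ref{completeness-equivalences}. Your additional direct ODE argument (escape lemma, lower semicontinuity of maximal existence time) is a welcome self-contained alternative that the paper omits, but it is not a different route so much as an unpacking of what \cite{CanSan08} already encodes.
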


In terms of dynamical systems, if $v' = f(v)$ is a dynamical system, where $f:U\subset \R^n \to \R^n$ is a $C^1$ map, and $v(t)$ is a trajectory of the system, then a point $y \in \R^n$ is called an $\omega$-\textit{limit point} of $v(t)$ if there exists a diverging increasing sequence $\{t_m\}$ such that $v(t_m) \to y$. In particular, if $v(t)$ has no $\omega$-limit point, then it is unbounded. The $\omega$-\textit{limit set} of a system is the set of all its  $\omega$-limit points. So the solution $u(t)$ of the Euler equation in Proposition~\ref{completeness-criterion} is extendible if and only if, it has an $\omega$-limit point. 

In the proof of Proposition~\ref{spacelike-killing-sl2}, we project the Euler equation on a hypersurface to obtain a linear system. The $\omega$-limit set of a linear system is computable, \cite{Hai08}. An special case is the following.
\begin{proposition}[\cite{Hai08}]\label{omega-limit-set}
Let $X' = AX$ be a linear dynamical system on $\R^n$ with $A\in \R^{n\times n}$. If an eigenvalue of $A$ has positive real part, then the $\omega$-limit set of the system is empty.
\end{proposition}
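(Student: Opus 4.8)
The plan is to treat this as a standard fact about linear flows and to prove it by exhibiting an exponentially growing coordinate along each relevant trajectory. Let $\lambda\in\C$ be an eigenvalue of $A$ with $a:=\mathrm{Re}(\lambda)>0$. Since $A$ and its transpose have the same spectrum, there is a nonzero (in general complex) covector $\ell$ with $\ell A=\lambda\ell$. For a solution $X(t)=e^{tA}X(0)$ I set $\phi(t):=\ell X(t)\in\C$; differentiating, $\phi'(t)=\ell AX(t)=\lambda\phi(t)$, so $\phi(t)=e^{\lambda t}\phi(0)$ and $|\phi(t)|=e^{at}|\phi(0)|$. Let $W:=\{x\in\R^{n}:\ell x=0\}$, a proper linear subspace of $\R^{n}$ (of real codimension $1$ if $\lambda\in\R$, and $2$ otherwise). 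If $X(0)\notin W$, then $\phi(0)\neq 0$ and, by Cauchy--Schwarz, $\|X(t)\|\ge |\phi(t)|/\|\ell\| = e^{at}|\phi(0)|/\|\ell\|\to+\infty$ as $t\to+\infty$.

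The conclusion about the $\omega$-limit set is then immediate: if $y$ were an $\omega$-limit point of $X(\cdot)$, there would be $t_{m}\nearrow+\infty$ with $X(t_{m})\to y$, hence $\|X(t_{m})\|\to\|y\|<+\infty$, contradicting $\|X(t)\|\to+\infty$. Thus every trajectory with $X(0)\notin W$ has empty $\omega$-limit set, which is exactly what is needed in the proof of Proposition~\ref{spacelike-killing-sl2}, where the relevant trajectory of the projected linear system is, by construction, not contained in $W$. For a statement uniform in the initial data one would instead split $\R^{n}=E^{s}\oplus E^{c}\oplus E^{u}$ into the $A$-invariant stable, center and unstable subspaces (here $E^{u}\neq 0$ by hypothesis), use the estimate $\|e^{-tA}x\|\le Ce^{-\beta t}\|x\|$ for $x\in E^{u}$ and some $\beta>0$ to obtain $\|e^{tA}x\|\ge C^{-1}e^{\beta t}\|x\|$ on $E^{u}$, and conclude that $\|X(t)\|\to+\infty$ whenever the $E^{u}$-component of $X(0)$ is nonzero.

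There is no real obstacle here: the heart of the argument is a one-line linear ODE computation together with a coercivity inequality. The only points that call for a little care are handling a complex eigenvalue $\lambda$ (one works with a complex covector $\ell$ and uses that $|\ell X(t)|$ is an honest real-valued function that blows up), and being precise about the phrase ``$\omega$-limit set of the system'' — the zero solution always has the origin as an $\omega$-limit point, so the intended (and used) reading is the $\omega$-limit set of a trajectory lying off $W$, equivalently off the stable-plus-center subspace $E^{s}\oplus E^{c}$.
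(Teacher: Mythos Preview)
The paper does not supply its own proof of this proposition; it is quoted from \cite{Hai08} and used as a black box in Lemma~\ref{lema4-spacelike=sl2c}. So there is nothing to compare your argument against in the paper itself.

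Your argument is correct and is the standard one: pick a left eigenvector $\ell$ for an eigenvalue $\lambda$ with $\mathrm{Re}(\lambda)>0$, observe that $t\mapsto \ell X(t)$ solves the scalar equation $\phi'=\lambda\phi$, and conclude that $\|X(t)\|\to\infty$ whenever $\ell X(0)\neq 0$, hence no $\omega$-limit point. You are also right to flag that the statement, read literally (``the $\omega$-limit set of the system is empty''), is false for the zero solution and, more generally, for trajectories lying in $E^{s}\oplus E^{c}$; the intended meaning is exactly the one you give, and in the paper's only application (Lemma~\ref{lema4-spacelike=sl2c}) the trajectory $\pi(u(t))$ is the projection of a nonconstant solution of the Euler equation living on the cone $S^1\times\R^+$, so it is not confined to the non-expanding subspace and your hypothesis $X(0)\notin W$ is met. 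The alternative route via the invariant splitting $E^{s}\oplus E^{c}\oplus E^{u}$ that you sketch would give the sharp statement (empty $\omega$-limit set iff the $E^{u}$-component of $X(0)$ is nonzero), but it is not needed here.
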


An \textit{idempotent} of the Euler field $F_{\mg}$ is by definition an element $x\in \g{g}$ such that $F_{\mg}(x)= x$. If $\gamma$ is a geodesic in $G$ starting at the identity so that $\gamma'(0)$ is an idempotent, then $\gamma$ runs on a one-parameter subgroup of $G$, namely, $\gamma$ is a reparametrization of $\exp(tx)$. It turns out that such a geodesic is always incomplete. Indeed, $\gamma$ is explicitly given as follows:
the integral curve $u(t)$ of the Euler field $F_{\mg}$ with initial condition $u(0) = x$ is $u(t) = \alpha(t)x$, where $\alpha(t) = 1/(1-t)$. The corresponding geodesic of $u(t)$ in $G$ is $\gamma(t) = \exp(\beta(t)A_{\mg}^{-1}x)$ where $\beta$ satisfies $\beta'(t) = \alpha(t)$ and $\beta(0) = 1$. Clearly, $\gamma(t)$ is incomplete because $u(t)$ is incomplete. 

Any semisimple Lie algebra together with Lie Poisson bracket obtained from its Lie bracket is a (linear) Poisson manifold and the Euler equation \eqref{euler-equation} is a Hamiltonian system of differential equations with Hamiltonian function $\mg^*(x,x)=\K(x,A_{\mg}^{-1}x)$. A function $f:\g{g} \to \R$ is called a \textit{first integral} or, as physicists wish to call, a \textit{constant of the motion} for the Euler equation if $f$ is constant on each of its solutions. The functions $\tr(\ad^m_x)$, for $m=1, 2, \ldots$, and $\mg^*(x,x)$ are first integrals of \eqref{euler-equation}.

%%%%%%%%%%%%%%%%%%%%%%%%%%%%%%%%%%
%%%%%%%%%      SECTION-3   %%%%%%%%%%%%%
%%%%%%%%%%%%%%%%%%%%%%%%%%%%%%%%%%

\section{Lorentzian Simple Lie groups}\label{section3}

It is well known that the isometry group of a semi-Riemannian manifold is a Lie group whose Lie algebra is anti-isomorphic to the Lie algebra of complete Killing vector fields on the manifold (\cite{ONe83},~Prop.9.33). On a Riemannian simple Lie group $(G,\mg)$ any Killing vector field $Z$ can be written as $Z = Z_L + Z_R$ with $Z_L$ a left-invariant and $Z_R$ a right-invariant vector field on $G$, \cite{Gor80, OchTak76}. Any right-invariant vector field on a semi-Riemannian Lie group is Killing as its flow is given by left translations of one-parameter subgroups (\cite{Tor12},~p.~257). So, in this case, the set of left-invariant Killing vector fields determines how rich is the supply of all Killing vector fields. In non-Riemannian case, an analogous result \cite{DAm88} states that if $(G,\mg)$ is a compact Lorentzian simple Lie group then $Iso^o(G,\mg) \subset G \times G$, implying that a Killing vector field on $G$ admits a similar decomposition as a sum of a pair of left- and right- invariant vector fields. Even though a similar decomposition for Killing vector fields does not hold in general on an arbitrary semi-Riemannian Lie group, knowing the effects that the existence of left-invariant Killing vector fields can be of interest. For instance, as we will see, having a left-invariant Killing vector on a simple Lorentzian Lie group, one can obtain an additional first integral for the corresponding Euler equation.

In this section we study geodesic completeness of simple Lorentzian Lie groups admitting a left-invariant Killing vector field, that is, when $\dim(Iso(G,\mg) \cap InnAut(G)) \geq 1$. We obtain some characterizations of left-invariant Killing vector fields and prove Theorem~\ref{MainTheorem1} via Proposition~\ref{timelike-killing} and Corollary~\ref{lightlike-strungly-causal}. 

Unless otherwise stated, in this section $G$ denotes a semisimple Lie group equipped with a left-invariant Lorentzian metric $\mg$. 

We begin with stating an equivalent algebraic condition for a left-invariant vector field on $G$ to be Killing.

\begin{lemma}\label{killing-equivalent}
A vector field $Z \in \X_L(G)$ is Killing if and only if $A_{\mg} \circ \ad_z = \ad_z \circ A_{\mg}$.
\end{lemma}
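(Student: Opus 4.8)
The plan is to characterize the Killing condition $\L_Z\mg = 0$ for a left-invariant $Z$ purely in terms of the data $(\K, A_{\mg})$ on $\g{g}$, using the fact that a left-invariant vector field has a very explicit flow, namely $\Phi_s = R_{\exp(sz)}$, right translation by the one-parameter subgroup $\exp(sz)$. The crucial point is that $Z$ is Killing if and only if its flow is by isometries, so I would compute $\frac{d}{ds}\big|_{s=0}(\Phi_s^*\mg)$. Since $\mg$ is left-invariant and $\Phi_s$ is a right translation, the pullback $\Phi_s^*\mg$ is again a left-invariant metric, determined by its value at the identity, where it acts on $T_eG \cong \g{g}$ through $\d(\Phi_s)_e = \d(R_{\exp(sz)})_e$. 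Passing from $T_eG$ to $T_{\exp(sz)}G$ and then back to $T_eG$ via left translation, the relevant linear map on $\g{g}$ is $\Ad_{\exp(-sz)} = e^{-s\,\ad_z}$. Hence $(\Phi_s^*\mg)(x,y) = \mg(e^{-s\,\ad_z}x,\ e^{-s\,\ad_z}y)$ for all $x,y\in\g{g}$.

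With this identity in hand, I would differentiate at $s=0$: the Killing condition becomes
\[
\mg(\ad_z x,\ y) + \mg(x,\ \ad_z y) = 0 \qquad \text{for all } x,y\in\g{g},
\]
i.e.\ $\ad_z$ is skew-symmetric with respect to $\mg$. Equivalently, $(\ad_z)^* = -\ad_z$ where $(\ad_z)^*$ is the $\mg$-transpose of $\ad_z$. (One should check that vanishing of the first derivative is enough: since $\Phi_s$ is a flow, $\Phi_{s+t} = \Phi_s \circ \Phi_t$ gives $\frac{d}{ds}\Phi_s^*\mg = \Phi_s^*\big(\frac{d}{dt}\big|_{t=0}\Phi_t^*\mg\big)$, so once the derivative vanishes at $0$ it vanishes for all $s$, and $\Phi_0^*\mg = \mg$; this is just the standard fact $\L_Z\mg = 0 \iff$ flow by isometries.)

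Finally I would translate the skew-symmetry of $\ad_z$ with respect to $\mg$ into the stated relation in terms of $A_{\mg}$ and the Killing form. Recall $\mg(x,y) = \K(x, A_{\mg}y)$ and $\K$ is $\ad$-invariant, hence $\ad_z$ is $\K$-skew: $\K(\ad_z x, y) + \K(x, \ad_z y) = 0$. Using this together with $\K$-symmetry of $A_{\mg}$, the condition $\mg(\ad_z x, y) + \mg(x, \ad_z y) = 0$ reads $\K(\ad_z x, A_{\mg}y) + \K(x, A_{\mg}\ad_z y) = 0$, and rewriting the first term via $\K$-invariance as $-\K(x, \ad_z A_{\mg} y)$ gives $\K\big(x,\ (A_{\mg}\ad_z - \ad_z A_{\mg})y\big) = 0$ for all $x,y$. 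Non-degeneracy of $\K$ then forces $A_{\mg}\circ\ad_z = \ad_z\circ A_{\mg}$, which is exactly the claim; the converse direction runs through the same chain of equalities in reverse. (Equivalently, in terms of the transpose operator used in \eqref{eq2}, $(\ad_z)^* = -A_{\mg}^{-1}\circ\ad_z\circ A_{\mg}$, so $(\ad_z)^* = -\ad_z \iff A_{\mg}\ad_z = \ad_z A_{\mg}$.)

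I do not expect a serious obstacle here; the only point requiring a little care is the identification of the pullback $\Phi_s^*\mg$ on $\g{g}$ with the map $e^{-s\,\ad_z}$, i.e.\ getting the left/right translations and the resulting $\Ad$ versus $\Ad^{-1}$ straight, together with the observation that first-order vanishing of $\Phi_s^*\mg$ suffices. Everything after that is a short manipulation with the invariance and symmetry properties of $\K$ and $A_{\mg}$.
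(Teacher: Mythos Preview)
Your proposal is correct and follows essentially the same route as the paper: both reduce the Killing condition to $\mg$-skew-symmetry of $\ad_z$ and then convert this to $A_{\mg}\circ\ad_z = \ad_z\circ A_{\mg}$ via the $\ad$-invariance and non-degeneracy of $\K$. The only cosmetic difference is that you compute $\L_Z\mg$ via the explicit flow $R_{\exp(sz)}$ acting through $e^{-s\,\ad_z}$, whereas the paper applies the Lie derivative formula $\L_Z\mg(X,Y)=Z\mg(X,Y)-\mg([Z,X],Y)-\mg(X,[Z,Y])$ directly on left-invariant $X,Y$.
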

\begin{proof}
Let $Z$ be a Killing vector field. Then for every pair of left-invariant vector fields $X, Y$ on $G$ one has 
\begin{equation}\label{eq15}
0 = \L_Z \mg (X,Y) = Z\mg(X,Y) - \mg([Z,X],Y) - \mg([Z,Y],X) ,    
\end{equation}
where $\L_Z$ is the Lie tensor derivation in direction of $Z$. The function $\mg(X,Y)$ is constant, so we have $0 = Z\mg(X,Y)$ which yields $\mg(\ad_zx , y) = \mg(-\ad_zy , x)$. It then follows that 
\begin{equation}\label{eq22}
(\ad_z)^* = -\ad_z .
\end{equation}

On the other hand, one has $(\ad_z)^* = - A_{\mg}^{-1} \circ \ad_z \circ A_{\mg}$. Comparing the last two equalities, one can see that $\ad_z \circ A_{\mg} = A_{\mg} \circ \ad_z$ is equivalent to \eqref{eq15} on $\X_L(G)$. One can extend this equivalence to $\X(G)$, using a global frame on $G$ obtained from any basis for $\X_L(G)$. 
\end{proof}

Next, we show that $\K(\cdot , z)$ is a first integral. 
\begin{proposition}\label{new-first-integral}
Let $Z \in \X_L(G)$ be a Killing vector field. Then, $\K(\cdot , z)$ is a first integral of the Euler equation of $\mg$. 
\end{proposition}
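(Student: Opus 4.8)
The plan is to show that the function $f(x) = \K(x,z)$ is constant along every solution $u(t)$ of the Euler equation \eqref{euler-equation}. Since $f$ is linear, it suffices to differentiate $f(u(t)) = \K(u(t),z)$ with respect to $t$ and check that the derivative vanishes. Using bilinearity of the Killing form and the Euler equation $u'(t) = [u(t), A_{\mg}^{-1} u(t)]$, we get
\[
\frac{d}{dt}\K(u(t),z) = \K(u'(t),z) = \K\bigl([u(t),A_{\mg}^{-1}u(t)],\,z\bigr).
\]
So the whole statement reduces to the identity $\K([x, A_{\mg}^{-1}x], z) = 0$ for all $x \in \g{g}$, which should hold precisely because $Z$ is Killing.

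The key input is the $\ad$-invariance of the Killing form, namely $\K([x,y],z) = -\K(y,[x,z]) = \K(y,[z,x]) = -\K([z,y],x)$, together with Lemma~\ref{killing-equivalent} which tells us that $\ad_z$ commutes with $A_{\mg}$ (equivalently, by \eqref{eq22}, that $\ad_z$ is skew-adjoint with respect to $\mg$, i.e.\ $(\ad_z)^* = -\ad_z$). The plan is: first rewrite $\K([x,A_{\mg}^{-1}x],z)$ using invariance of $\K$ to move the bracket, obtaining something like $-\K(A_{\mg}^{-1}x, [x,z]) = -\K(A_{\mg}^{-1}x, \ad_x z)$. Then I would use invariance again to shift $\ad_x$, getting $\K([x, A_{\mg}^{-1}x], z) = \K(x, [A_{\mg}^{-1}x, z]) = -\K(x, \ad_z A_{\mg}^{-1}x)$. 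Now apply $A_{\mg}\ad_z = \ad_z A_{\mg}$ (Lemma~\ref{killing-equivalent}) to write $\ad_z A_{\mg}^{-1} = A_{\mg}^{-1}\ad_z$, so this becomes $-\K(x, A_{\mg}^{-1}\ad_z x)$. Finally, using that $A_{\mg}^{-1}$ is $\K$-symmetric (stated in the preliminaries: $\mg^*(x,y) = \K(x,A_{\mg}^{-1}y)$ is symmetric), move $A_{\mg}^{-1}$ back onto the first slot to get $-\K(A_{\mg}^{-1}x, \ad_z x) = -\mg^*$-type expression; comparing with the first rewriting $-\K(A_{\mg}^{-1}x, \ad_x z) = \K(A_{\mg}^{-1}x, \ad_z x)$ (since $[x,z] = -[z,x]$), we find the quantity equals its own negative, hence is zero.

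Alternatively, and perhaps more transparently, I would argue directly from the geometric characterization: if $\gamma$ is a geodesic of $(G,\mg)$ and $Z$ is Killing, then $\mg(Z_{\gamma(t)}, \gamma'(t))$ is constant (this is recalled in Subsection~\ref{lorentz-manifolds}). Translating to the Lie algebra via the reflection $u(t) = (dL_{\gamma(t)})^{-1}\gamma'(t)$ and left-invariance of $\mg$ and of $Z$, one has $\mg(Z_{\gamma(t)}, \gamma'(t)) = \mg(z, u(t)) = \K(A_{\mg} z, u(t))$. Using again that $\ad_z$ commutes with $A_{\mg}$ one can replace $A_{\mg}z$ by $z$ up to the constancy statement, or simply observe that $\K(z,\cdot)$ and $\mg(z,\cdot) = \K(A_{\mg}z,\cdot)$ differ by precomposition with the invertible operator $A_{\mg}$, and that $A_{\mg}z$ is again a direction for which the same conservation holds — but cleaner is to note $\mg(z,u(t)) = \K(z, A_{\mg}u(t))$ is constant and then observe that $\K(z,\cdot)$ itself is constant along $u(t)$ by the computation above. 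I would present the short algebraic computation as the main proof, as it is self-contained and does not require passing back to $G$.

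The main obstacle, such as it is, is organizing the chain of invariance identities so that the cancellation is visible without sign errors; there is no deep difficulty, since everything follows from $\ad$-invariance of $\K$, the $\K$-symmetry of $A_{\mg}^{-1}$, and the commutation relation $A_{\mg}\ad_z = \ad_z A_{\mg}$ from Lemma~\ref{killing-equivalent}. One should be slightly careful that the conclusion is stated for the Euler field $F_{\mg}(x) = [x, A_{\mg}^{-1}x]$ and not for $x \mapsto [x,y]$ with fixed $y$; the point is precisely that the ``internal'' argument $A_{\mg}^{-1}x$ moves along with $x$, and it is the interplay of $A_{\mg}^{-1}$ with the Killing condition on $z$ that forces the vanishing.
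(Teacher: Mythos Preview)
Your proposal is correct and follows essentially the same approach as the paper: both reduce the claim to the identity $\K([x,A_{\mg}^{-1}x],z)=0$ and establish it by combining $\ad$-invariance of $\K$, the $\K$-symmetry of $A_{\mg}^{-1}$, and the commutation $A_{\mg}^{-1}\ad_z=\ad_z A_{\mg}^{-1}$ from Lemma~\ref{killing-equivalent}, so that the expression is shown to equal its own negative. The paper presents this as the single chain $\K([y,A_{\mg}^{-1}y],z)=\K(y,[A_{\mg}^{-1}y,z])=\K(A_{\mg}^{-1}y,[y,z])=-\K([y,A_{\mg}^{-1}y],z)$, which is exactly your computation compressed into one line.
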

\begin{proof}
We need to show that $\K(u(t),z)$ is constant for any solution $u(t)$ of the Euler equation of $\mg$. Let $y\in \g{g}$, then, we have
\[
\K([y,A_{g}^{-1}y],z) = \K(y,[A_{g}^{-1}y,z]) = \K(A_{g}^{-1}y,[y,z]) = - \K([y,A_{g}^{-1}y],z),
\]
and thus 
\begin{equation}\label{eq19}
    0 = \K([y,A_{g}^{-1}y],z), \quad \text{for all } y \in \g{g}.
\end{equation}
In particular, one gets 
\[
\frac{d}{dt}\K(u(t),z) = \K(u'(t),z) = \K([u(t),A_{\mg}^{-1}u(t)],z) = 0 ,
\]
which implies that $\K(u(t),z)$ is constant.
\end{proof}

The Lie algebra $\g{sl}_2(\R)$ has the lowest dimension among all (non-compact) simple Lie algebras and, up to isomorphism, it is the only three-dimensional simple Lie algebra. In the following proposition and the examples after it, we see that, in the presence of a left-invariant Killing vector field, the behavior of a left-invariant Lorentzian metric in dimension three is different from higher dimensions. So, we consider dimension three separately and then for the rest of the section assume that $\dim(G) > 3$.

We show that in dimension three, existence of a Killing vector field implies completeness of the metric.  

\begin{proposition}\label{killing-sl2r}
Let $(G , \mg)$ be a $3$-dimensional Lorentzian simple Lie group. If there exists a left-invariant Killing vector field on $G$, then $\mg$ is geodesically complete. 
\end{proposition}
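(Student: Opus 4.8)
The plan is to exploit the new first integral from Proposition~\ref{new-first-integral} together with the classical first integrals $\mg^*(x,x)$ and $\tr(\ad_x^2)$ to confine every integral curve of the Euler field $F_{\mg}$ to a bounded set, and then invoke Theorem~\ref{completeness-equivalences}. Since $\dim G = 3$ and $G$ is simple, Theorem~\ref{simple-c1-subalgebra} forces $\g{g} \cong \g{sl}_2(\R)$. Fix a solution $u(t)$ of the Euler equation. By Proposition~\ref{new-first-integral} the linear functional $\K(u(t),z)$ is constant, say equal to $c_1$; by the remarks at the end of Subsection~\ref{semisimple-algebras-euler-eq} the Killing-norm $\K(u(t),A_{\mg}^{-1}u(t)) = \mg^*(u(t),u(t))$ is constant, say $c_2$, and on a three-dimensional simple Lie algebra $\tr(\ad_x^2)$ is, up to a scalar, just $\K(x,x)$, which therefore is also constant, say $c_3$. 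Thus $u(t)$ is trapped in the intersection of the affine hyperplane $\{\K(\cdot,z)=c_1\}$, the quadric $\{\K(\cdot,A_{\mg}^{-1}\cdot)=c_2\}$ and the quadric $\{\K(\cdot,\cdot)=c_3\}$ inside $\R^3$.

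The core of the argument is to show this intersection is bounded (or at least that $u(t)$ cannot escape to infinity along it). First I would use Lemma~\ref{killing-equivalent}: $\ad_z$ commutes with $A_{\mg}$, so $\ad_z$ is $\K$-skew-symmetric (equation \eqref{eq22}) and simultaneously $\mg$-skew-symmetric. On $\g{sl}_2(\R)$ this is quite restrictive: one classifies $z$ (up to the adjoint action and scaling) as elliptic, hyperbolic, or nilpotent, and checks which cases admit a nonzero $\mg$ making $\ad_z$ skew and $\mg$ Lorentzian. I expect that the hyperbolic and nilpotent cases are excluded — an $\mg$-skew operator on a Lorentzian 3-space has all eigenvalues with zero real part, so $\ad_z$ must be semisimple with purely imaginary spectrum, i.e. $z$ is elliptic (compact). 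Concretely, in a suitable basis $z$ spans the rotation generator and $\ad_z$ acts as rotation in a positive-definite plane and trivially on $\R z$; then $\K$ restricted to $z^{\perp_\K}$ is positive definite. With this normal form in hand, one computes that the level set $\{\K(\cdot,z)=c_1\} \cap \{\K(\cdot,\cdot)=c_3\}$ is an ellipse (a circle of fixed radius in the positive-definite plane, at fixed height $c_1/\K(z,z)$ along $\R z$), which is compact. Hence $u(t)$ is bounded, so by Proposition~\ref{completeness-criterion} and Theorem~\ref{completeness-equivalences} the metric $\mg$ is complete.

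The main obstacle is the case analysis for $z$ and, in particular, ruling out the possibility that $\ad_z$ has a nonzero real eigenvalue or a nontrivial nilpotent part while still being skew for some Lorentzian $\mg$; one must be careful that "Lorentzian" (signature $(2,1)$ or $(1,2)$) is used, not merely "indefinite," since on $\R^3$ a Lorentzian form has a definite 2-plane available for any skew operator's imaginary block. A secondary subtlety is handling the degenerate constant values: if $c_1 = 0$ one should check directly that the relevant level set is still bounded or argue that $u(t)$ lands on an idempotent-free situation; but since we will have shown $z$ is compact and $\K$ is positive definite transverse to $\R z$, the intersection of $\{\K(\cdot,z)=c_1\}$ with $\{\K(\cdot,\cdot)=c_3\}$ is compact for every value of $c_1$ (including $0$), so this causes no trouble. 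Alternatively, one can bypass part of the normal-form computation by noting that compactness of $z$ makes $\R z$ a compactly-embedded Cartan subalgebra and quoting that the adjoint orbits together with these linear constraints are compact; I would, however, prefer the explicit $\g{sl}_2(\R)$ computation as it is short and self-contained.
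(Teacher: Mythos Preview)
Your central claim --- that an operator skew with respect to a Lorentzian inner product on $\R^3$ must have purely imaginary spectrum --- is false. The space of such operators is exactly the Lorentz algebra $\g{so}(2,1)\cong\g{sl}_2(\R)$, which contains boosts (real eigenvalues $\pm\lambda$) and null rotations (nilpotent). In fact the paper, immediately after this proposition, exhibits explicit Lorentzian metrics on $SL_2(\R)$ for which the left-invariant Killing field $Z$ is nilpotent, and others for which $Z=\xi$ is hyperbolic semisimple. So the case analysis you ``expect'' to dispose of cannot be disposed of: those cases genuinely occur.

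Worse, your boundedness strategy fails outright in the hyperbolic case. Take $A_{\mg}^{-1}=\diag(a,a,b)$ in an $\g{sl}_2$-triple basis $\{x,y,\xi\}$ with $a\neq b$; then $Z=\xi$ is Killing. The three first integrals fix $\gamma$ and the product $\alpha\beta$ (writing $u=\alpha x+\beta y+\gamma\xi$) but not $\alpha$ and $\beta$ separately, so the common level set is an unbounded hyperbola. The Euler equation reads $\gamma'=0$, $\alpha'=2(a-b)\gamma\alpha$, $\beta'=-2(a-b)\gamma\beta$, giving exponentials that are unbounded yet defined for all $t$. Thus completeness here is \emph{not} a consequence of boundedness, and your scheme cannot close. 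The paper instead argues by contradiction via the Bromberg--Medina characterization: if $\mg$ were incomplete there would be a nilpotent idempotent $x$ with $[x,A_{\mg}^{-1}x]=x$; one then uses \eqref{eq19} and the commutation $[z,A_{\mg}^{-1}\cdot]=A_{\mg}^{-1}[z,\cdot]$ to force $z=0$.
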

\begin{proof}
Since the geodesic equation is presented in terms of the Euler equation in the Lie algebra $\g{g} = \g{sl}_2(\R)$, we may, for simplicity, take $G = SL_2(\R)$ as the corresponding Lie group of $\g{sl}_2(\R)$.

Let $Z \in \X_L(SL_2(\R))$ be a Killing vector field. If $\mg$ is not complete, then according to \cite{BroMed08} there exists a nilpotent $0 \neq x \in \g{sl}_2(\R)$ such that $[x , A^{-1}_{\mg}x] = x$. One can take $y, \xi \in \g{sl}_2(\R)$ so that $\{x, y, \xi\}$ is an $\g{sl}_2$-triple, that is,
\[
[\xi , x] = 2x, \quad [\xi , y] = -2y , \quad [x , y] = \xi .
\]

One can use the above bracket relations to check that $\{x, y, \xi\}$ is a pseudo-orthogonal basis for $\g{sl}_2(\R)$ with respect to the Killing form $\K$. By \eqref{eq19} one gets $\K(z,x) = \K(z , [x , A^{-1}_{\mg}x]) = 0$, yielding $z = ax + b\xi$, for some $a, b \in \R$, and $[z,x] = 2bx$. Then, we have
\begin{align*}
    2bx = [z,x] & =[z , [x , A^{-1}_{\mg}x]] \\
    & = [[z , x] , A^{-1}_{\mg}x] + [x , [z , A^{-1}_{\mg}x]] \\
    & = [2bx , A^{-1}_{\mg}x] + [x , A^{-1}_{\mg}[z , x]] \\
    &= 4bx .
\end{align*}
which gives $b = 0$ and $z = ax$, implying that $0 = [z , A^{-1}_{\mg}z] = a^2[x , A^{-1}_{\mg}x] = a^2 x$. So, $a = 0$ and, consequently, $z = 0$; a contradiction.
\end{proof}

Suppose that $\{x, y, \xi\}$ is an $\g{sl}_2$-triple in $\g{sl}_2(\R)$ as in the proof of Proposition~\ref{killing-sl2r}. Let $\mg$ is the left-invariant Lorentzian metric on $SL_2(\R)$ whose associated isomorphism is given by 
\[
A_{\mg}^{-1} = 
\begin{pmatrix}
a & b & 0 \\
0 & a & 0 \\
0 & 0 & a \\
\end{pmatrix}
\]
in the basis $\{x, y, \xi\}$, for $0 \neq a, b$. Then one can see that $\ad_x$ commutes with $A_{\mg}$ and, thus, by Lemma~\ref{killing-equivalent}, the nilpotent element $x$ defines a left-invariant Killing vector field on $SL_2(\R)$. Similarly, if the metric $\mg$ is associated with  
\[
A_{\mg}^{-1} = 
\begin{pmatrix}
a & 0 & 0 \\
0 & a & 0 \\
0 & 0 & b \\
\end{pmatrix}
\]
then, the left-invariant vector field obtained from the semisimple element $\xi$ is Killing. As the following proposition shows, such examples, namely, Killing vector fields generated by nilpotent or semisimple element, exist just on $SL_2(\R)$.

We denote by $c_{\g{g}}(z)$ the centralizer of $z$ in $\g{g}$, that is, $c_{\g{g}}(z) = \{ x \in \g{g} : [z , x] = 0 \}$.  

\begin{proposition}\label{neither-semisimple-nilpotent}
Let $G$ be a simple Lie group with $\dim(G) > 3$. If $Z \in \X_L(G)$ is Killing, then, $\ad_z$ is neither semisimple nor nilpotent.
\end{proposition}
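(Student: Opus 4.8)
The plan is to argue by contradiction, treating the semisimple and nilpotent cases separately, in each case producing a ``large'' subalgebra of $\g{g}$ and invoking Theorem~\ref{simple-c1-subalgebra} to force $\g{g} \cong \g{sl}_2(\R)$, contradicting $\dim(G) > 3$. The bridge between the Killing condition and subalgebras is Lemma~\ref{killing-equivalent}: since $A_{\mg}\circ \ad_z = \ad_z \circ A_{\mg}$, the operator $A_{\mg}^{-1}$ commutes with $\ad_z$ as well, so every eigenspace (more generally, every generalized eigenspace) of $\ad_z$ is $A_{\mg}^{-1}$-invariant. Because $\ad_z$ is a derivation, the sum of generalized eigenspaces for eigenvalue $0$ is exactly the centralizer-type subalgebra, and more generally the eigenspace decomposition is compatible with the bracket in the usual graded way ($[\g{g}_\lambda, \g{g}_\mu]\subset \g{g}_{\lambda+\mu}$).

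First I would handle the nilpotent case. If $\ad_z$ is nilpotent and $z \neq 0$, embed $z$ in an $\g{sl}_2$-triple $\{z, y, \xi\}$ by Jacobson--Morozov, so that $\ad_\xi$ acts diagonally with integer eigenvalues and $\g{g} = \bigoplus_{k} \g{g}_k$ where $\g{g}_k = \ker(\ad_\xi - k\,\id)$; the subspace $\g{p} = \bigoplus_{k\ge 0}\g{g}_k$ is a parabolic subalgebra. The Killing first integral, in the form of equation~\eqref{eq19}, gives $\K(\ad_x(A_{\mg}^{-1}x), z) = 0$ for all $x$, i.e.\ $\K([\g{g}, A_{\mg}^{-1}\g{g}], z) = 0$; combined with invariance of $\K$ and the $\K$-symmetry of $A_{\mg}^{-1}$, this should constrain $A_{\mg}^{-1}$ relative to the grading. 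The cleanest route is probably: since $\ad_z$ is nilpotent and $A_{\mg}^{-1}$ commutes with it, $A_{\mg}^{-1}$ preserves $\ker \ad_z$ and also $\operatorname{im}\ad_z$; using $\mg^*(\cdot,\cdot) = \K(\cdot, A_{\mg}^{-1}\cdot)$ has Lorentzian signature and the known structure of how $\K$ pairs the pieces $\g{g}_k$ with $\g{g}_{-k}$, one derives that $\dim\g{g}_0$ (or the relevant centralizer $c_{\g{g}}(z)$) is too small unless $\g{g}$ is tiny. I expect to show $c_{\g{g}}(z)$, or the parabolic $\g{p}$, has codimension $\le 1$, hence $\g{g}\cong\g{sl}_2(\R)$ by Theorem~\ref{simple-c1-subalgebra}.

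For the semisimple case, if $\ad_z$ is diagonalizable write $\g{g} = c_{\g{g}}(z) \oplus \bigoplus_{\lambda \neq 0}\g{g}_\lambda$ over $\C$, with $c_{\g{g}}(z) = \g{g}_0$ a (reductive, nonzero since $z \in c_{\g{g}}(z)$) subalgebra, each $\g{g}_\lambda$ being $A_{\mg}^{-1}$-invariant. The Killing form pairs $\g{g}_\lambda$ with $\g{g}_{-\lambda}$ non-degenerately and is non-degenerate on $\g{g}_0$. Now $\mg^*$, being Lorentzian, is negative (or positive) definite on a subspace of codimension $1$; since $A_{\mg}^{-1}$ respects the decomposition, $\mg^*$ restricted to $\g{g}_0$ together with each complementary-pair block must fit into Lorentzian signature, which forces at most one eigenvalue pair and $\dim_\C \g{g}_0 \le 2$, again bounding $\dim\g{g}$ and yielding $\g{g}\cong\g{sl}_2(\R)$. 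An alternative, perhaps slicker, argument in both cases: Proposition~\ref{killing-sl2r} already shows that on $\g{sl}_2(\R)$ existence of a Killing $Z$ forces completeness, and earlier we noted the nilpotent/semisimple examples on $SL_2(\R)$; so one only needs the structural statement that a Killing $\ad_z$ which is semisimple or nilpotent generates a codimension-$\le 1$ subalgebra, and the contradiction is immediate for $\dim(G) > 3$.

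The main obstacle will be the semisimple case: nilpotency gives the Jacobson--Morozov grading essentially for free, but for $\ad_z$ semisimple one must rule out the possibility of several small eigenspaces (so that $c_{\g{g}}(z)$ has large codimension while no single subalgebra of codimension one appears). The key leverage is the Lorentzian signature of $\mg^*$: since $A_{\mg}^{-1}$ commutes with $\ad_z$ and preserves each $\g{g}_\lambda$, and since $\K$ is definite-free only on pairs $\g{g}_\lambda \oplus \g{g}_{-\lambda}$, the negative-definite codimension-$1$ subspace of $\mg^*$ cannot straddle too many blocks; pinning down exactly which configuration of eigenvalues is compatible with signature $(n-1,1)$ and with $\ad_z$ being a derivation (so eigenvalues of $z$ close under addition within $\g{g}$) is the delicate bookkeeping step, but it should terminate with $\g{g}$ of dimension $\le 3$.
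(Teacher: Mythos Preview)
Your overall architecture (produce a codimension-one subalgebra, invoke Theorem~\ref{simple-c1-subalgebra}) matches the paper, but you are missing the single tool that makes both halves short: from Lemma~\ref{killing-equivalent} one gets not only that $A_{\mg}$ commutes with $\ad_z$ but, more to the point, equation~\eqref{eq22}, namely $(\ad_z)^* = -\ad_z$ with respect to $\mg$ itself. This $\mg$-skewness is what the paper actually uses, and it replaces all of your ``delicate bookkeeping'' in the semisimple case: for any nonzero eigenvalue $\lambda$ and $x\in W_\lambda$ one has $\lambda\,\mg(x,x)=-\lambda\,\mg(x,x)$, so $W_\lambda$ is $\mg$-totally null, hence one-dimensional because $\mg$ is Lorentzian; and for distinct nonzero eigenvalues $\lambda\ne\pm\mu$ the same identity forces $\mg(W_\lambda,W_\mu)=0$, giving a $2$-dimensional null subspace, again impossible. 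So the spectrum of $\ad_z$ is $\{0\}$ or $\{0,\pm\lambda\}$, and either $c_{\g{g}}(z)$ or $c_{\g{g}}(z)\oplus W_\lambda$ is a codimension-one subalgebra. Your route via $\mg^*$ and $A_{\mg}^{-1}$-invariant blocks can be pushed to the same conclusion (since $\K$ pairs $\g{g}_\lambda$ with $\g{g}_{-\lambda}$ and $A_{\mg}^{-1}$ preserves $\g{g}_\lambda$, each $\g{g}_\lambda$ is $\mg^*$-null), but you did not recognize that this already finishes the argument; there is no residual combinatorics.

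In the nilpotent case there is a genuine gap. Jacobson--Morozov gives a grading by $\ad_\xi$, not by $\ad_z$; you only know that $A_{\mg}$ commutes with $\ad_z$, so there is no reason for $A_{\mg}$ (or $\mg$, or $\mg^*$) to respect the $\xi$-grading, and your plan to use the parabolic $\g p=\bigoplus_{k\ge 0}\g{g}_k$ has no link to the Lorentzian signature. The paper avoids this entirely by exploiting $\mg$-skewness again: for $k=\lceil m/2\rceil$ one has $\mg(\ad_z^kx,\ad_z^ky)=\pm\mg(x,\ad_z^{2k}y)=0$, so $\operatorname{Im}(\ad_z^k)$ is $\mg$-null, hence a line; together with $[z,A_{\mg}z]=0$ (so $z$ is $\mg$-null) and $\mg(z,\operatorname{Im}\ad_z^k)=0$, this forces $\operatorname{Im}(\ad_z^k)=\R z$, whence $m=3$, $\ker(\ad_z^2)$ has codimension one, and a short calculation exhibits $c_{\g{g}}(z)\oplus\R y_0$ (with $\ad_z y_0=z$) as a codimension-one subalgebra. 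To repair your outline, drop Jacobson--Morozov and work directly with powers of $\ad_z$ and equation~\eqref{eq22}.
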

\begin{proof}
Suppose that $\ad_z$ is semisimple.  Then, one gets the decomposition $\g{g} = c_{\g{g}}(z) \oplus [z,\g{g}]$. The operator $\ad_z$ is skew-adjoint with respect to $\K$ and thus the decomposition is $\K$-orthogonal. So, in particular, $\K$ is non-degenerate on $c_{\g{g}}(z)$. Moreover, since $A_{\mg}$ is an isomorphism and commutes with $\ad_z$, it leaves both $c_{\g{g}}(z)$ and $[z,\g{g}]$ invariant.

The decomposition is also orthogonal with respect to the Lorentzian metric $\mg$. Since for $x \in c_{\g{g}}(z)$ and $y \in \g{g}$, we have 
\[
\mg(x , [z,y]) = \K(A_{\mg}x , [z,y]) = \K([A_{\mg}x,z] , y) = 0 ,
\]
which also shows that $\mg$ is non-degenerate on $c_{\g{g}}(z)$ and $[z,\g{g}]$. 

Let $\lambda$ and $\mu$ be two non-zero eigenvalues of $\ad_z$ with eigenspaces $W_\lambda$ and $W_\mu$, respectively. Then for every $x\in W_\lambda$ and $y \in W_\mu$, we have 
\begin{equation}
\lambda \mg(x,y) = \mg(\ad_zx , y) = \mg(x , \ad_z^* y) = -\mg(x , \ad_zy) = -\mu \mg(x,y) .
\end{equation}
If $\lambda = \mu$ it follows that $\mg(x,y) = 0$ and $\mg(x,x) = 0$, for any $x, y \in W_\lambda$. This shows that $W_\lambda$ is one-dimensional and also that $\mg$ is Lorentzian on $[z,\g{g}]$. 

On the other hand, if $\lambda \neq \mu$, then as above one gets $0 = \mg(x,x) = \mg(y,y)$ for any $x\in W_\lambda$ and $y\in W_\mu$, which since $\mg$ is Lorentzian yields $\mu = -\lambda$. So, either $\ad_z$ just has one non-zero eigenvalue and, therefore, $c_{\g{g}}(z)$ is a codimension one subalgebra of $\g{g}$, or it has two non-zero eigenvalues $\pm \lambda$. In the latter case one can see that $\R x \oplus c_{\g{g}}(z)$, with $\ad_zx = \lambda x$, is a subalgebra of $\g{g}$ of codimension one. Now it follows from Theorem~\ref{simple-c1-subalgebra} that $\g{g}$ is isomorphic to $\g{sl}_2(\R)$. This contradicts our assumption that $\dim(G) > 3$.

Now, suppose that $\ad_z$ is nilpotent. Then, there exists some $m \geq 3$ so that $\ad_z^{m-1} \neq 0$ and $\ad_z^m = 0$. Let $k = \frac{m}{2}$ if $m$ is even and $k = [\frac{m}{2}]+1$, when $m$ is odd. Then
\[
\mg(\ad_z^k x , \ad_z^k y) = \mg(x , (\ad_z^k)^* \circ \ad_z^k (y)) = \mg(x , (-1)^k \ad_z^{2k} y) = 0 .
\]
So, $Im(\ad_z^k)$ is a totally null subspace of $\g{g}$ and, since $\mg$ is Lorentzian, it must be a one-dimensional subspace. Note also that we have $\mg(z , \ad_z^k x) = 0$ for all $x \in \g{g}$. 

On the other hand, since $[z,A_{\mg}z] = \ad_z(A_{\mg}z) = A_{\mg}(\ad_zz) = 0$, we see that $z$ and $A_{\mg}z$ commute and, therefore, the matrix product $z.A_{\mg}z$ is nilpotent. One then gets 
\[
\mg(z,z) = \K(z , A_{\mg}z) = \tr(z.A_{\mg}z) = 0 .
\]
So, $z$ is a lightlike element which is also orthogonal to $Im(\ad_z^k)$ and, hence, $Im(\ad_z^k) = \R z$. This, in particular, yields $\ad_z^{k+1} = 0$, which according to the definition of $k$ and $m$, concludes that $m = k+1$ and, consequently, we get $k = 2$ and $m = 3$. So, we have the operator $\ad_z^2 : \g{g} \to \g{g}$ whose kernel is of codimension one. We proceed to show that $\g{g}$ has to have a codimension one subalgebra. For any $y \in \ker(\ad_z^2)$, we have $\mg(\ad_zy , \ad_zy) = -\mg(\ad_z^2y , y) = 0$ and from \eqref{eq22}, $\mg(z , \ad_zy) = - \mg(\ad_z^*z,y)=0$. This, together with the fact that $z$ is lightlike, gives $\ad_zy = \lambda z$ for some real $\lambda$. One can write $y = \lambda y_0 + x$ for some $x, y_0 \in \g{g}$, such that $\ad_zx = 0$ and $\ad_zy_0 = z$. This shows that $Im(\ad_z) = \spann_\R\{y_0 , y_1\}$ for some $y_1 \in \g{g}$ with $\ad_z^2y_1 \neq 0$. Now, one can easily see that $c_{\g{g}}(z) \oplus \R y_0$ is a codimension one subalgebra of $\g{g} = c_{\g{g}}(z) \oplus Im(\ad_z) = c_{\g{g}}(z) \oplus \spann_\R\{y_0 , y_1\}$. Again, having a codimension one subalgebra, Theorem~\ref{simple-c1-subalgebra} implies that $\g{g}$ is isomorphic to $\g{sl}_2(\R)$, which is a contradiction. 
\end{proof}

The first part of Theorem~\ref{MainTheorem1}, was proved in \cite{AlePut90}. However, for the sake of completeness, we prove it in the following proposition. Our proof is different from that of \cite{AlePut90} and the relations therein are used in the proof of other results of this section. 

\begin{proposition}\label{timelike-killing}
Let $(G , \mg)$ be a Lorentzian simple Lie group with $\dim(G) > 3$. If there exists a timelike Killing vector field $Z \in \X_L(G)$, then, solutions of the corresponding Euler equation are bounded. In particular, $\mg$ is geodesically complete.
\end{proposition}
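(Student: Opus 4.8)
The plan is to produce two conserved quantities that together force every integral curve of the Euler field to be bounded. By Theorem~\ref{completeness-equivalences}, boundedness of the integral curves of $F_{\mg}$ implies completeness, so this suffices. The first first integral is the Hamiltonian $\mg^*(x,x) = \K(x,A_{\mg}^{-1}x)$, which is constant along any solution $u(t)$. The second is the linear first integral $\K(u(t),z)$ supplied by Proposition~\ref{new-first-integral}, using that $Z$ is Killing. The idea is that, since $Z$ is \emph{timelike}, the combination of these two quantities is a positive definite (or at least coercive) function on $\g{g}$, so its level sets are compact, trapping $u(t)$.

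First I would set up a splitting of $\g{g}$ adapted to $z$. Since $Z$ is Killing, Lemma~\ref{killing-equivalent} gives $A_{\mg}\circ\ad_z = \ad_z\circ A_{\mg}$ and $(\ad_z)^* = -\ad_z$ with respect to $\mg$. Because $z$ is timelike, the line $\R z$ is nondegenerate and $\g{g} = \R z \oplus z^{\perp}$ is an $\mg$-orthogonal direct sum, with $\mg$ \emph{negative definite} on $\R z$ (our convention) and \emph{positive definite} on $z^{\perp}$; moreover this splitting is preserved by $A_{\mg}$ and by $\ad_z$ (indeed $\ad_z z = 0$ and $\ad_z$ is $\mg$-skew, hence preserves $z^\perp$). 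Writing $u(t) = f(t)\,z + w(t)$ with $w(t)\in z^{\perp}$, the two first integrals become, after normalizing $\mg(z,z) = -1$,
\begin{align}
\K(u(t),z) &= \mg(A_{\mg}^{-1}u(t), z) = \mg(u(t), A_{\mg}^{-1}z) = c_1, \label{eq:lin}\\
\mg^*(u(t),u(t)) &= \K(u(t),A_{\mg}^{-1}u(t)) = c_2. \label{eq:ham}
\end{align}
From \eqref{eq:lin}, since $A_{\mg}^{-1}z = \lambda z$ would be the easy case but in general $A_{\mg}^{-1}z$ is some fixed vector commuting appropriately with $z$, one extracts a \emph{linear} control on $f(t)$; the key point is that the $z$-component of $u(t)$ is bounded (in fact affine-linear in the other data, ultimately constant after using both integrals).

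The main step is then to bound $w(t)$. Expanding \eqref{eq:ham} and using that $A_{\mg}^{-1}$ preserves $\R z$ and $z^{\perp}$, one gets $c_2 = \mg^*$ evaluated on $f z + w$, which splits as a (definite) quadratic form in $f$ plus a quadratic form in $w$ plus cross terms; since $f$ is already controlled, this reads as $\mg^*(w,w) = c_2 + (\text{bounded})$. Now here is the crux: $\mg^*$ restricted to $z^{\perp}$ — equivalently the form $w \mapsto \K(w, A_{\mg}^{-1}w)$ on $z^\perp$ — must be \emph{positive definite}. This is where ``$Z$ timelike'' and ``$\mg$ Lorentzian'' are both used: $\mg$ is positive definite on $z^{\perp}$, and $A_{\mg}^{-1}$ is $\K$-self-adjoint and commutes with $\ad_z$; one argues that $A_{\mg}^{-1}$ restricted to $z^{\perp}$ is $\mg$-self-adjoint and, because $\mg$ has exhausted its one negative direction along $\R z$, $A_{\mg}^{-1}|_{z^\perp}$ must be $\mg$-positive definite (otherwise $\mg^*$ would have a second nonpositive direction, contradicting that $A_{\mg}$ preserves signature, i.e.\ $\mg^*$ is again Lorentzian with its single negative direction along $\R z$). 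Granting this, $\{w : \mg^*(w,w) \le \text{const}\}\cap z^\perp$ is compact, so $w(t)$ is bounded; combined with the bound on $f(t)$, $u(t)$ is bounded and the proof is complete.

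The main obstacle is precisely this last definiteness claim: pinning down that the negative direction of $\mg^*$ lies exactly along $\R z$ — i.e.\ that $A_{\mg}^{-1}z$ stays in the timelike regime and that $\mg^*|_{z^\perp}$ inherits positive definiteness. I expect this to follow by noting that $A_{\mg}$ is $\K$-self-adjoint and $\mg = \K(\cdot, A_{\mg}\cdot)$ is Lorentzian, so $A_{\mg}$ (hence $A_{\mg}^{-1}$) has exactly one ``negative'' direction in the appropriate sense; since $\ad_z$-invariance forces $A_{\mg}^{-1}$ to respect the splitting $\R z \oplus z^\perp$ and $z$ itself realizes the unique timelike direction, the restriction to $z^\perp$ is forced to be positive definite. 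Once this structural fact is in hand, \eqref{eq:lin}–\eqref{eq:ham} immediately give the uniform bound $\|u(t)\| \le C$ for any Euclidean norm on $\g{g}$, and Theorem~\ref{completeness-equivalences} yields geodesic completeness.
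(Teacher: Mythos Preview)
Your overall strategy is right and matches the paper's: the two first integrals $\mg^*(u,u)$ and $\K(u,z)$ together force boundedness. The gap is in the structural claim that ``$\ad_z$-invariance forces $A_{\mg}^{-1}$ to respect the splitting $\R z\oplus z^\perp$.'' Commutation with $\ad_z$ only tells you that $A_{\mg}^{-1}$ preserves $\ker\ad_z=c_{\g g}(z)$ and $[z,\g g]$; since $c_{\g g}(z)$ is typically larger than $\R z$ (already two-dimensional in $\g{sl}_2(\C)$, cf.\ Lemma~\ref{lema1-spacelike=sl2c}), there is no reason for $A_{\mg}^{-1}z$ to lie on the line $\R z$. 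Your key definiteness claim then hangs in the air: $\mg^*$ is positive definite on the $\mg$-orthogonal $z^{\perp}$ precisely when $A_{\mg}z$ is $\mg$-timelike --- indeed the $\mg^*$-orthogonal of that hyperplane is $\R A_{\mg}^2 z$, and $\mg^*(A_{\mg}^2 z,A_{\mg}^2 z)=\mg(A_{\mg}z,A_{\mg}z)$ --- and nothing you have established forces that.

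The fix is simple: work on the hyperplane the linear first integral actually hands you, $H=\{x:\K(x,z)=0\}$, rather than on $z^{\perp}$. For $x\in H$ one has $\mg(A_{\mg}^{-1}x,z)=\K(A_{\mg}^{-1}x,A_{\mg}z)=\K(x,z)=0$, so $A_{\mg}^{-1}x\in z^{\perp}$, and hence $\mg^*(x,x)=\mg(A_{\mg}^{-1}x,A_{\mg}^{-1}x)>0$ for $x\ne0$ because $\mg$ is positive definite on $z^\perp$. The paper packages exactly this as a short contradiction: if a solution were unbounded, normalize along a diverging sequence to obtain a nonzero limit direction $\theta$; constancy of the first integrals gives $\K(\theta,z)=0$ and $\mg^*(\theta,\theta)=0$, so $A_{\mg}^{-1}\theta$ is a nonzero $\mg$-null vector lying in the $\mg$-spacelike hyperplane $z^\perp$, which is impossible. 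Your direct compactness argument works equally well once you split $\g g=\R(A_{\mg}z)\oplus (A_{\mg}z)^{\perp_{\mg^*}}$, noting $\mg^*(A_{\mg}z,A_{\mg}z)=\mg(z,z)<0$, instead of $\R z\oplus z^{\perp}$.
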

\begin{proof}
Suppose that $Z \in \X_L(G)$ is a timelike Killing vector field. If $\mg$ is incomplete then by Theorem~\ref{completeness-equivalences} there exists an unbounded solution $u:[0,b) \to \g{g}$ of the Euler equation of $\mg$. We fix an arbitrary norm $\| , \|$ on $\g{g}$. Using Proposition~\ref{completeness-criterion}, we can choose an increasing sequence $\{t_m\}$ in $[0,b)$ converging to $b$ and satisfying $\lim_{m\to \infty}\|u(t_m)\| = \infty$ and $\lim_{m\to \infty}u(t_m)/\|u(t_m)\| = \theta$. Knowing that $\mg^*(x,x)$ is a first integral of the Euler equation, one can see that $\theta$ is nilpotent and $\mg^*(\theta , \theta) =\mg(A_{\mg}^{-1}\theta , A_{\mg}^{-1}\theta) = 0$. Lemma~\ref{new-first-integral}, provides another first integral $\K(x , z)$ which can be used to get $\K(\theta , z) =\mg(A_{\mg}^{-1}\theta , z) = 0$. So, $A_{\mg}^{-1}\theta$ is a null vector which lies in the spacelike subspace $z^\perp \subset \g{g}$; a contradiction. Consequently, all solutions of the Euler equation must be bounded which, by Theorem~\ref{completeness-equivalences}, means that $\mg$ is complete.
\end{proof}

We now turn to the case that $Z$ is lightlike.
\begin{lemma}\label{z-Az-lightlike}
Let $(G , \mg)$ be a Lorentzian simple Lie group with $\dim(G) > 3$. If there exists a lightlike Killing vector field $Z \in \X_L(G)$, such that $A_{\mg}z$ is lightlike, then, solutions of the corresponding Euler equation are bounded. In particular, $\mg$ is geodesically complete.
\end{lemma}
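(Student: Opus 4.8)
The plan is to argue by contradiction, extending the scheme of Proposition~\ref{timelike-killing}. Suppose $\mg$ is incomplete. By Theorem~\ref{completeness-equivalences} the Euler equation has an unbounded solution $u:[0,b)\to\g{g}$ with $b<+\infty$. Fix a norm $\|\cdot\|$ coming from a positive definite scalar product on $\g{g}$; by Proposition~\ref{completeness-criterion} one may choose $t_m\nearrow b$ with $\|u(t_m)\|\to\infty$ and $u(t_m)/\|u(t_m)\|\to\theta$, $\|\theta\|=1$. Since $\tr(\ad_x^m)$ for $m=1,2,\dots$ and $\mg^*(x,x)$ are first integrals of the Euler equation, and so is $\K(\cdot,z)$ by Proposition~\ref{new-first-integral}, dividing each of these by the appropriate power of $\|u(t_m)\|$ and letting $m\to\infty$ gives $\tr(\ad_\theta^m)=0$ for every $m$ — so $\theta$ is nilpotent — together with $\mg^*(\theta,\theta)=0$ and $\K(\theta,z)=0$.

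Next I would read these three identities through $\mg(x,y)=\K(x,A_{\mg}y)$, $\mg^*(x,y)=\K(x,A_{\mg}^{-1}y)$ and the $\K$-symmetry of $A_{\mg}$ and $A_{\mg}^{-1}$. From $\mg^*(\theta,\theta)=0$ one gets that $w:=A_{\mg}^{-1}\theta$ is a nonzero $\mg$-lightlike vector, and from $\K(\theta,z)=0$ that $\mg(w,z)=\K(w,A_{\mg}z)=\K(\theta,z)=0$. Since $z$ is lightlike, $w$ is lightlike, and $w$ is $\mg$-orthogonal to $z$, and since a Lorentzian scalar product carries no two-dimensional totally null subspace, $w$ must lie in $\R z$; write $\theta=cA_{\mg}z$ with $c\neq0$. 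As $\theta$ is nilpotent, so is $A_{\mg}z$, whence $\K(A_{\mg}z,A_{\mg}z)=\tr(\ad_{A_{\mg}z}^2)=0$, equivalently $\mg(z,A_{\mg}z)=0$ (using $\K(A_{\mg}z,A_{\mg}z)=\K(z,A_{\mg}^2z)=\mg(z,A_{\mg}z)$). Now the hypothesis enters: $A_{\mg}z$ is lightlike, so $z$ and $A_{\mg}z$ are two $\mg$-orthogonal lightlike vectors and hence again proportional. Therefore $\theta\in\R z\setminus\{0\}$.

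Finally, $\theta\in\R z$ together with nilpotency of $\theta$ forces $\ad_z$ to be nilpotent. Since $Z$ is a nonzero left-invariant Killing vector field and $\dim(G)>3$, this contradicts Proposition~\ref{neither-semisimple-nilpotent}. Hence no unbounded solution of the Euler equation exists, i.e. all its solutions are bounded, and $\mg$ is complete by Theorem~\ref{completeness-equivalences}.

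The proof is thus a direct strengthening of the timelike case, and I do not anticipate a serious obstacle. The two points I would be most careful about are, first, the algebraic bookkeeping among $\mg$, $\mg^*$, $\K$ and $A_{\mg}$ — especially checking that $\K(\theta,z)=0$ really yields $\mg(A_{\mg}^{-1}\theta,z)=0$ via the $\K$-symmetry of $A_{\mg}^{-1}$ — and, second, making sure the final conclusion is that $\ad_z$, and not merely $A_{\mg}z$, is nilpotent, so that Proposition~\ref{neither-semisimple-nilpotent} applies verbatim. The genuinely new inputs, namely that a nilpotent element is $\K$-isotropic and that $\mg$-orthogonal lightlike vectors are parallel in Lorentzian signature, are elementary.
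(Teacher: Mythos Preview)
Your proof is correct and follows essentially the same route as the paper's: extract a nilpotent limit direction $\theta$ with $\mg^*(\theta,\theta)=0$ and $\K(\theta,z)=0$, use Lorentzian signature to force $A_{\mg}^{-1}\theta\in\R z$ and hence $A_{\mg}z$ nilpotent, then use the hypothesis that $A_{\mg}z$ is lightlike together with $\mg(z,A_{\mg}z)=\K(A_{\mg}z,A_{\mg}z)=0$ to get $A_{\mg}z\in\R z$, whence $z$ is nilpotent, contradicting Proposition~\ref{neither-semisimple-nilpotent}. Your write-up is somewhat more explicit about the $\K$-symmetry bookkeeping, but the argument is the same.
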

\begin{proof}
Suppose that $Z \in \X_L(G)$ is a Killing vector field and $z$ and $A_{\mg}z$ are both lightlike. As in the proof of Proposition~\ref{timelike-killing}, if there exists an unbounded solution of the Euler equation, then one can find $\theta \in \g{g}$ such that $\mg^*(\theta , \theta) = 0$ and $\K(\theta , z) = 0$. Furthermore, the fact that $\tr(\ad_x^m)$, $m \in \mathbb{N}$, are also first integrals of the Euler equation implies that $\theta$ is nilpotent. Since $z$ is lightlike, we have $A_{\mg}^{-1}\theta = \lambda z$ for some real $\lambda$, so $A_{\mg}z$ is also nilpotent. On the other hand, $A_{\mg}z$ and $z$ are colinear because they are both lightilke and $\mg(z,A_{\mg}z) = \K(A_{\mg}z , A_{\mg}z) = 0$. Therefore, $z$ is nilpotent, which contradicts  Proposition~\ref{neither-semisimple-nilpotent}. So, all solutions of the Euler equation are bounded and by Theorem~\ref{completeness-equivalences}, $\mg$ is complete
\end{proof}
An immediate consequence of the above proposition is the following corollary.
\begin{corollary}
Let $(G , \mg)$ be a Lorentzian simple Lie group with $\dim(G) > 3$. If there exists a lightlike Killing vector field $Z \in \X_L(G)$, such that $z$ is an eigenvector of $A_{\mg}$, then $\mg$ is geodesically complete.
\end{corollary}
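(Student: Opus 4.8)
The plan is to reduce this corollary directly to Lemma~\ref{z-Az-lightlike}. Suppose $Z \in \X_L(G)$ is a lightlike Killing vector field and $z$ is an eigenvector of $A_{\mg}$, say $A_{\mg}z = \lambda z$ for some $\lambda \in \R$. Since $A_{\mg}$ is an isomorphism, $\lambda \neq 0$. Then $A_{\mg}z = \lambda z$ is a nonzero scalar multiple of the lightlike vector $z$, hence $A_{\mg}z$ is itself lightlike: $\mg^*$-norm computations are unnecessary since $\mg(A_{\mg}z, A_{\mg}z) = \lambda^2 \mg(z,z) = 0$. Thus the hypotheses of Lemma~\ref{z-Az-lightlike} are met, and the conclusion — that solutions of the Euler equation are bounded and therefore $\mg$ is geodesically complete — follows immediately.

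I would write this out in roughly four lines. The only things to be careful about: first, one must note $\lambda \neq 0$, which is forced because $A_{\mg}$ is invertible (equivalently, $\mg$ is non-degenerate); second, one should observe that $z$ being both lightlike \emph{and} an eigenvector of $A_{\mg}$ is exactly what produces the colinearity of $z$ and $A_{\mg}z$ that Lemma~\ref{z-Az-lightlike} exploits — indeed in the proof of that lemma the colinearity of $z$ and $A_{\mg}z$ was derived from them both being lightlike together with $\mg(z, A_{\mg}z) = 0$, whereas here it is simply built into the eigenvector hypothesis. There is no real obstacle; this is a clean specialization. One might optionally remark that the eigenvector hypothesis is strictly stronger than the ``$A_{\mg}z$ lightlike'' hypothesis of the lemma in general (a lightlike vector can be mapped by $A_{\mg}$ to a different lightlike vector), so the corollary is genuinely a corollary and not a restatement, though the lemma's proof shows the two conditions coincide for lightlike Killing fields.

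If I wanted to make the corollary self-contained rather than invoking the lemma, I would instead run the argument of Proposition~\ref{timelike-killing} directly: assuming an unbounded Euler solution $u:[0,b)\to\g{g}$, extract the normalized limit direction $\theta$, use that $\mg^*(x,x)$ and $\tr(\ad_x^m)$ are first integrals to conclude $\theta$ is nilpotent with $\mg^*(\theta,\theta)=0$, use Proposition~\ref{new-first-integral} to get $\K(\theta, z)=0$, i.e. $\mg(A_{\mg}^{-1}\theta, z)=0$; then since $z$ is lightlike, $\mg(A_{\mg}^{-1}\theta, \cdot)$ vanishing on $z$ combined with $\mg^*(\theta,\theta)=0$ forces $A_{\mg}^{-1}\theta \in \R z$, whence $\theta \in \R A_{\mg}z = \R z$, so $z$ is nilpotent, contradicting Proposition~\ref{neither-semisimple-nilpotent}. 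But since Lemma~\ref{z-Az-lightlike} is already available, the one-line reduction is preferable.
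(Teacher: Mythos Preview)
Your proposal is correct and matches the paper's approach exactly: the paper states this corollary immediately after Lemma~\ref{z-Az-lightlike} with the words ``An immediate consequence of the above proposition is the following corollary'' and gives no further proof, so your one-line reduction (eigenvector $\Rightarrow$ $A_{\mg}z = \lambda z$ lightlike $\Rightarrow$ apply the lemma) is precisely what is intended.
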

 
 As we saw in Proposition~\ref{neither-semisimple-nilpotent}, semisimple and nilpotent vectors in $T_eG$ can not be extended to a left-invaraint Killing vector field on $G$. In the next result we see that when the metric is incomplete only compact elements can generate a left-invariant lightlike Killing vector field.

\begin{lemma}\label{compact-element}
Let $(G , \mg)$ be an incomplete Lorentzian simple Lie group with $\dim(G) > 3$. If $Z \in \X_L(G)$ is a lightlike Killing vector field, then $z=Z_e\in \g{g}$ is a compact element.
\end{lemma}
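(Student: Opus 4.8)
The plan is to reduce the statement to linear algebra on the Lorentzian inner product space $(\g{g},\mg)$. By Lemma~\ref{killing-equivalent} (equivalently by \eqref{eq22}), the operator $\ad_z$ is skew-adjoint with respect to $\mg$, so it lies in the orthogonal Lie algebra of $(\g{g},\mg)$, a copy of $\g{so}(1,n-1)$ with $n=\dim G$. By the characterization of compact elements recalled in Subsection~\ref{semisimple-algebras-euler-eq}, it suffices to prove that every eigenvalue of $\ad_z$ is purely imaginary. Besides skew-adjointness, the only input I would use is Proposition~\ref{neither-semisimple-nilpotent}, which tells us that $\ad_z$ is not semisimple.

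Next I would complexify. On $\g{g}_{\C}$ with the $\C$-bilinear extension $\mg_{\C}$ of $\mg$, the operator $\ad_z$ is still skew-adjoint, so its spectrum is stable under $\lambda\mapsto-\lambda$ and under $\lambda\mapsto\bar\lambda$, and a routine computation with skew-adjointness gives $\mg_{\C}(V_\lambda,V_\mu)=0$ for generalized eigenspaces whenever $\lambda+\mu\neq0$; in particular $V_\lambda$ is totally isotropic for every $\lambda\neq0$, and $\mg_{\C}$ pairs $V_\lambda$ with $V_{-\lambda}$ nondegenerately. Set $U_+=\bigoplus_{\operatorname{Re}\lambda>0}V_\lambda$ and $U_-=\bigoplus_{\operatorname{Re}\lambda>0}V_{-\lambda}$. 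Since all eigenvalues occurring in $U_+$ have positive real part, no two of them sum to zero, so $U_+$ is totally isotropic, and likewise $U_-$; moreover $U_+\oplus U_-$ is nondegenerate and, because $\overline{V_\lambda}=V_{\bar\lambda}$ and $\operatorname{Re}\bar\lambda=\operatorname{Re}\lambda$, it is stable under complex conjugation. Hence it is the complexification of the real subspace $W=(U_+\cap\g{g})\oplus(U_-\cap\g{g})$, on which $\mg$ is nondegenerate; since $U_+\cap\g{g}$ and $U_-\cap\g{g}$ are totally isotropic and in perfect $\mg$-duality, $\mg|_W$ has signature $(k,k)$ with $k=\dim_{\C}U_+$. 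As $\mg$ is Lorentzian, its signature $(p,q)$ satisfies $\min(p,q)=1$, so $k\le1$.

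Finally, if $k=0$ then, by the $\pm$-symmetry of the spectrum, no eigenvalue of $\ad_z$ has nonzero real part and we are done. If $k=1$, there is a unique eigenvalue $\alpha$ with $\operatorname{Re}\alpha>0$ and $\dim_{\C}V_\alpha=1$, and $\alpha$ must be real, for otherwise $\bar\alpha$ would be a second such eigenvalue and force $k\ge2$. Then $V_\alpha\oplus V_{-\alpha}$ is a real $2$-plane of signature $(1,1)$ on which $\ad_z$ acts as $\diag(\alpha,-\alpha)$, hence semisimply; its $\mg$-orthogonal complement is $\ad_z$-invariant (by skew-adjointness) and carries a definite form, so $\ad_z$ restricted to it is skew-adjoint for a definite inner product, hence semisimple. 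Thus $\ad_z$ would be semisimple, contradicting Proposition~\ref{neither-semisimple-nilpotent}. Therefore $k=0$, every eigenvalue of $\ad_z$ is purely imaginary, and $z$ is a compact element.

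The main point requiring care is the middle step: one must verify that the sum of the positive-real-part generalized eigenspaces is a conjugation-stable, nondegenerate block that decomposes into two totally isotropic pieces in duality, so that its set of real points is a genuine neutral subspace of $(\g{g},\mg)$ to which the bound $\min(p,q)=1$ applies; everything else is then forced. (This argument uses only that $Z$ is Killing together with Proposition~\ref{neither-semisimple-nilpotent}; the hypotheses that $\mg$ is incomplete and $Z$ lightlike merely record the setting in which the lemma will be applied, namely to produce, via Proposition~\ref{imprisoned}, a causal curve imprisoned in a compact set.)
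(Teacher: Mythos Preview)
Your argument is correct and in fact proves more than the lemma states: you show that \emph{any} left-invariant Killing vector field $Z$ on a Lorentzian simple Lie group of dimension $>3$ has $z$ compact, using only skew-adjointness of $\ad_z$ with respect to $\mg$ and the non-semisimplicity of $\ad_z$ from Proposition~\ref{neither-semisimple-nilpotent}. The paper takes a different, more geometric route that genuinely uses the extra hypotheses. From incompleteness and the limiting argument of Lemma~\ref{z-Az-lightlike} it extracts a nilpotent $\theta$ with $A_{\mg}^{-1}\theta$ lightlike and $\mg$-orthogonal to $z$; since $z$ is lightlike this forces $A_{\mg}^{-1}\theta\in\R z$, whence $A_{\mg}z$ is nilpotent and $\mg(A_{\mg}z,A_{\mg}z)\neq0$. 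Setting $\xi=A_{\mg}^2z$, the plane $\spann\{z,\xi\}$ is then Lorentzian and its $\mg$-orthogonal complement contains $[z,\g{g}]$ (because $A_{\mg}$ commutes with $\ad_z$), so $[z,\g{g}]$ is spacelike and $\ad_z$ is skew for a definite form there, hence compact. What your approach buys is generality and economy: a clean spectral argument in $\g{so}(1,n-1)$ that dispenses with the incompleteness and lightlike hypotheses altogether. What the paper's approach buys is additional structural information that is used downstream, namely the $\mg$-orthogonal splitting $\g{g}=c_{\g{g}}(z)\oplus[z,\g{g}]$ with $[z,\g{g}]$ positive definite, obtained along the way.
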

\begin{proof}
Recall that the centralizer $c_{\g{g}}(z)$ and $Im(\ad_z) = [z , \g{g}]$ are left invariant by $A_{\mg}$. We also know from Proposition~\ref{neither-semisimple-nilpotent} that $z$ is not nilpotent. Moreover, since $\mg$ is incomplete, as in the proof Lemma~\ref{z-Az-lightlike} one can see that $A_{\mg}z$ is nilpotent and $0 \neq \mg(A_{\mg}z , A_{\mg}z)$. Let $\xi = A_{\mg}^2z$, then $0\neq \mg(z,\xi)$, and the subspace $\spann\{\xi, z \}$ is Lorentzian. Since its orthogonal complement contains $[z , \g{g}]$, and the subspace $[z , \g{g}]$ is spacelike. Thus, for any $x \in c_{\g{g}}(z)$ and $y \in \g{g}$ we have
\[
\mg(x , [z,y]) = \K(A_{\mg}x , [z,y]) = \K([A_{\mg}x,z] , y) = 0 ,
\]
showing that $c_{\g{g}}(z)$ and $[z , \g{g}]$ are orthogonal, which together with the facts that $\dim(\g{g}) = \dim(c_{\g{g}}(z)) + \dim([z , \g{g}])$ gives the orthogonal decomposition $\g{g} = c_{\g{g}}(z) \oplus [z , \g{g}]$. 

The metric $\mg$ is positive definite on $[z , \g{g}]$ and the operator $\ad_z$, being skew-symmetric with respect to $\mg$, has just pure imaginary eigenvalues on $[z , \g{g}]$ so it is compact.
\end{proof}

The following corollary proves the second part of Theorem~\ref{MainTheorem1}.

\begin{corollary}\label{lightlike-strungly-causal}
Let $(G , \mg)$ be a Lorentzian simple Lie group with $\dim(G) > 3$. If $G$ is strongly causal and there exists a left-invariant lightlike Killing vector field $Z$ on $G$, then $\mg$ is geodesically complete.
\end{corollary}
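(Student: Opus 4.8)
The plan is to argue by contradiction: suppose $G$ is strongly causal but $\mg$ is incomplete. Since $Z$ is lightlike, Lemma~\ref{compact-element} gives that $z=Z_e$ is a compact element, so $\ad_z$ is $\mg$-skew-adjoint (by \eqref{eq22}), semisimple, and has purely imaginary spectrum; moreover, as in the proof of that lemma we have the $\mg$-orthogonal decomposition $\g{g}=c_{\g{g}}(z)\oplus[z,\g{g}]$ with $[z,\g{g}]$ spacelike and $c_{\g{g}}(z)$ a Lorentzian subalgebra containing the Lorentzian plane $\spann\{z,\xi\}$, $\xi:=A_\mg^{2}z$; both summands are $A_\mg$-invariant and are preserved by every $\exp(s\,\ad_z)$, which is an $\mg$-isometry. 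Because $\ad_z$ is compact, $\{\exp(s\,\ad_z):s\in\R\}$ is relatively compact in the group of $\mg$-isometries of $\g{g}$, and $s\mapsto\exp(s\,\ad_z)$ fixes $z$ and $\xi$ and restricts to the identity on $c_{\g{g}}(z)$.

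By Theorem~\ref{completeness-equivalences} and Proposition~\ref{completeness-criterion}, incompleteness supplies a geodesic $\gamma:[0,b)\to G$, $b<+\infty$, whose reflection $u(t)$ has $\|u(t)\|\to\infty$, while $\mg(u,u)$, the functions $\tr(\ad^{m}_{u})$, and (Proposition~\ref{new-first-integral}) $\K(u,z)=\mg(u,A_\mg^{-1}z)$ are constant along $\gamma$. First I would pin down the asymptotics of $u(t)$. The identity $[z,A_\mg^{-1}z]=0$, valid because $A_\mg$ commutes with $\ad_z$ and hence $A_\mg^{-1}z\in c_{\g{g}}(z)$, kills the only would-be quadratic contribution of the Euler field in the $z$-direction; together with the conservation laws and the decomposition this forces the incomplete geodesic to escape to infinity asymptotically tangent to the left-invariant null field $Z$, that is $u(t)/\|u(t)\|\to\pm z/\|z\|$, while $c_1(t):=\mg(u(t),z)$ stays bounded and bounded away from $0$ (if instead the component of $u$ transverse to $z^{\perp}$ stayed bounded, the $z$-coordinate $a$ of $u$ would obey a linear differential inequality $|a'|\le K|a|+C$ and hence remain bounded, contradicting $\|u(t)\|\to\infty$).

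Next I would renormalise $\gamma$ along the flow of $Z$. Since $Z$ is Killing its flow consists of the isometries $R_{\exp(sz)}$, so for any smooth $c:[0,b)\to\R$ the curve $\sigma(t):=\gamma(t)\exp(-c(t)z)$ has reflection $w(t)=\exp(c(t)\,\ad_z)u(t)-c'(t)z$; because $\exp(c\,\ad_z)$ is an $\mg$-isometry fixing $z$ one gets $\mg(w(t),w(t))=\mg(u(t),u(t))-2c'(t)\,\mg(u(t),z)=c_0-2c'(t)c_1(t)$. As $c_1(t)$ is bounded away from $0$ I would pick $c'$ so that $\sigma$ is future-directed timelike; simultaneously, using that $\{\exp(s\,\ad_z)\}$ is relatively compact and that the escape of $\gamma$ is carried entirely by the $z$-direction (which $\exp(-c(t)z)$ is designed to cancel), I would arrange that $\sigma$ stays inside a fixed compact set $L\subset G$, morally near the compact closure of the one-parameter subgroup $\exp(\R z)$. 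Reparametrising $\sigma$ by Riemannian arc length then produces a future-inextendible causal curve that is partially future imprisoned in $L$; this contradicts Proposition~\ref{imprisoned}, since $G$ is strongly causal. Hence $\mg$ is complete.

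The main obstacle is the construction of $\sigma$ in the last step: one must make a single function $c(t)$ do two jobs at once, keeping $\sigma$ timelike and keeping it, at least along a sequence $t_m\nearrow b$, inside a fixed compact set. This requires controlling fairly precisely the components of $u(t)$ in $[z,\g{g}]$ and in the spacelike part of $c_{\g{g}}(z)$, so as to bound the drift of $\sigma$ away from $\overline{\exp(\R z)}$; here one uses the conservation laws together with the fact that $z$ is a compact, lightlike, non-nilpotent element — the non-nilpotency via Proposition~\ref{neither-semisimple-nilpotent}, which also rules out the degenerate possibility $u(t)\in\R z$ — as well as the standing hypothesis $\dim G>3$.
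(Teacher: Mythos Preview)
Your route is far more elaborate than needed and the key step is genuinely missing. The paper's proof is essentially one line once Lemma~\ref{compact-element} is available: the integral curve of $Z$ through the identity, $t\mapsto\exp(tz)$, is itself a lightlike (hence causal) curve defined on all of $\R$; since $z$ is compact, $\Ad(\exp(\R z))$ has compact closure in $\Ad(G)$, and as a simple $G$ is a finite cover of $\Ad(G)$, the set $\overline{\exp(\R z)}$ is compact in $G$. Thus $\exp(tz)$ is an inextendible causal curve imprisoned in a compact set, contradicting Proposition~\ref{imprisoned}. No incomplete geodesic, no renormalisation, no curve $\sigma$ is needed.

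By contrast, you try to manufacture an imprisoned causal curve out of an incomplete geodesic $\gamma$ by right-translating along $\exp(\R z)$, and this is where the real gap lies. First, a minor error: the asymptotic direction of $u(t)$ is along $A_\mg z$, not $z$ --- the argument of Lemma~\ref{z-Az-lightlike} gives $A_\mg^{-1}\theta\in\R z$, hence $\theta\in\R A_\mg z$, and in the incomplete case $A_\mg z$ is nilpotent with $\mg(A_\mg z,A_\mg z)\neq 0$, so $A_\mg z\notin\R z$. More seriously, keeping $\sigma(t)=\gamma(t)\exp(-c(t)z)$ inside a fixed compact subset of $G$ requires controlling $\gamma(t)$ in the group, not merely its reflection $u(t)$ in $\g{g}$; right-multiplication by elements of the compact set $\overline{\exp(\R z)}$ cannot compensate for divergence of $\gamma(t)$ in directions transverse to that one-parameter subgroup, and none of your conservation laws bounds those group directions. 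You flag this yourself as ``the main obstacle'', but it is not a detail to be filled in --- it is the entire difficulty, and the paper sidesteps it by noticing that the obvious causal curve $\exp(tz)$ already does the job.
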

\begin{proof}
If $\mg$ is incomplete then it follows from Proposition~\ref{compact-element} that $z$ is a compact element. That is, the one-parameter subgroup $\Ad(\exp(tz))$ is included in a compact subgroup of $\Ad(G)$. Since $G$ is simple, it is a finite covering group of $\Ad(G)$, (\cite{ColMcG93}, Ch.~1,~1.3). So one can see that the causal curve $\exp(tz)$ is also included in a compact subset of $G$ which is a contradiction according to Proposition~\ref{imprisoned} since $G$ is strongly causal. 
\end{proof}

We conclude this section with the following remark.
\begin{remark}
It can be seen from the proofs of Proposition~\ref{timelike-killing} and Lemma~\ref{z-Az-lightlike} that when there exists a left-invariant causal Killing vector field $Z$ on a Lorentzian simple Lie group $(G,\mg)$, then, $F_{\mg}(A_{\mg}(z))=[z , A_{\mg}z] = 0$. Hence, $A_{\mg}z$ is an equilibrium point of the Hamiltonian dynamical system associated with the metric $\mg$ on $\g{g}$. Moreover, every unbounded trajectory of the Euler equation is asymptotically tangent to the line spanned by $A_{\mg}z$. In particular, for those Lie groups with the property that any limit geodesic of a family of incomplete geodesics is incomplete, a left-invariant causal Killing vector field can exist only if the metric is geodesically complete.
\end{remark}

%%%%%%%%%%%%%%%%%%%%%%%%%%%%%%%%%%
%%%%%%%%%      SECTION-4   %%%%%%%%%%%%%
%%%%%%%%%%%%%%%%%%%%%%%%%%%%%%%%%%

\section{Left-invariant Lorentzian Metrics on $SL_2(\C)$}\label{section4}
In this section, we prove Theorems~\ref{MainTheorem2} via Propositions~\ref{causal-killing-sl2} and \ref{spacelike-killing-sl2} and also prove Theorem~\ref{MainTheorem3}. We also provide some clarifying examples.

We begin by proving the following proposition which shows that Corollary~\ref{lightlike-strungly-causal} holds for $SL_2(\C)$ without assuming $(SL_2(\C),\mg)$ to be strongly causal. 

\begin{proposition}\label{causal-killing-sl2}
Let $\mg$ be a left-invariant Lorentzian metric on $SL_2(\C)$. If there exists a left-invariant causal Killing vector field on $SL_2(\C)$, then, solutions of the Euler equation are bounded and, in particular, $\mg$ is geodesically complete.
\end{proposition}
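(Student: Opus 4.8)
The plan is to argue by contradiction, following the strategy of Proposition~\ref{timelike-killing} and Lemma~\ref{z-Az-lightlike} but exploiting the special structure of $\g{sl}_2(\C)$ (which is six-dimensional as a real Lie algebra and simple). Suppose there is a left-invariant causal Killing vector field $Z$ and that $\mg$ is incomplete, so by Theorem~\ref{completeness-equivalences} there is an unbounded solution $u:[0,b)\to\g{sl}_2(\C)$ of the Euler equation. As in the proof of Proposition~\ref{timelike-killing}, normalize to produce a limit direction $\theta$ with $\mg^*(\theta,\theta)=0$ and, using that $\tr(\ad_x^m)$ are first integrals, $\theta$ nilpotent; and using the new first integral from Proposition~\ref{new-first-integral}, $\K(\theta,z)=0$. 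If $Z$ is timelike, Proposition~\ref{timelike-killing} already gives a contradiction, so the real work is the lightlike case. Here, as in Lemma~\ref{z-Az-lightlike}, write $A_{\mg}^{-1}\theta=\lambda z$ (since $z$ is lightlike and $\K(\theta,z)=0$), conclude $A_{\mg}z$ is nilpotent, and note $\mg(z,A_{\mg}z)=\K(A_{\mg}z,A_{\mg}z)$. If this vanishes then $z$ and $A_{\mg}z$ are collinear lightlike vectors, forcing $z$ itself nilpotent, contradicting Proposition~\ref{neither-semisimple-nilpotent}; so $\mg(z,A_{\mg}z)\neq 0$, and by Lemma~\ref{compact-element} the element $z$ is compact.

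The remaining task is to rule out a compact lightlike Killing generator on $SL_2(\C)$ specifically. First I would use that in $\g{sl}_2(\C)$ a compact element $z$ has $\ad_z$ with purely imaginary eigenvalues; since $\g{sl}_2(\C)$ is the complexification of $\g{sl}_2(\R)$ (equivalently $\g{sl}_2(\C)_{\R}\cong\g{so}(3,1)$), the real adjoint operator $\ad_z$ on the six-dimensional real algebra is conjugate to a sum of rotation blocks, and up to scaling and conjugation $z$ lies in a compact Cartan-type subalgebra — concretely $z$ is $\Ad$-conjugate to a real multiple of a rotation generator whose centralizer $c_{\g{g}}(z)$ is two-real-dimensional. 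One then knows explicitly that $c_{\g{g}}(z)$ is a $\K$-nondegenerate abelian subalgebra and $[z,\g{g}]$ is its orthogonal complement, $\mg$-positive-definite on $[z,\g{g}]$ by the proof of Lemma~\ref{compact-element}. The contradiction comes from signature counting: the metric $\mg$ is Lorentzian (signature $(1,5)$ or $(5,1)$) on all of $\g{g}$, positive definite on the $4$-dimensional $[z,\g{g}]$, hence it must be Lorentzian on the $2$-dimensional $c_{\g{g}}(z)$; but then $z\in c_{\g{g}}(z)$ and $A_{\mg}z\in c_{\g{g}}(z)$ are two lightlike vectors in a Lorentzian plane, so they are collinear, giving $\mg(z,A_{\mg}z)\ne0$ while $\K(A_{\mg}z,A_{\mg}z)=0$ — consistent so far — but $A_{\mg}z$ nilpotent inside the abelian subalgebra $c_{\g{g}}(z)$ forces, since a nilpotent element commuting with the compact (hence semisimple, with imaginary eigenvalues) element $z$ must commute with $z$'s semisimple and nilpotent Jordan parts, that $A_{\mg}z$ is actually central in $\g{g}$, hence $A_{\mg}z=0$, a contradiction. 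Once the contradiction is reached, unboundedness is impossible, so every solution of the Euler equation is bounded and Theorem~\ref{completeness-equivalences} gives geodesic completeness.

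The main obstacle I anticipate is the last step: showing that on $\g{sl}_2(\C)$ a lightlike compact $z$ cannot have $A_{\mg}z$ nilpotent and nonzero. The clean way is to pin down $c_{\g{g}}(z)$ for compact $z$ precisely — it is a two-dimensional real torus algebra consisting entirely of semisimple elements (every nonzero element of a compact Cartan subalgebra of $\g{sl}_2(\C)$ has purely imaginary nonzero eigenvalues, except those proportional to... in fact all are semisimple) — so it contains no nonzero nilpotent at all; then $A_{\mg}z\in c_{\g{g}}(z)$ being nilpotent forces $A_{\mg}z=0$, contradicting invertibility of $A_{\mg}$. I would make this structural claim about $c_{\g{g}}(z)$ rigorous either by direct computation with the standard real basis of $\g{so}(3,1)$, or by invoking that $z$ compact means $z$ lies in $\g{k}$ for a Cartan decomposition $\g{g}=\g{k}\oplus\g{p}$ with $\g{k}\cong\g{su}(2)$, in which $\ad_z$ restricted to $c_{\g{g}}(z)\subset\g{k}$ is semisimple, so every element of $c_{\g{g}}(z)$ that commutes with $z$ and lies in this reductive-in-$\g{g}$ subalgebra is semisimple. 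This identification of $z$ with an element of $\g{su}(2)$ inside $\g{sl}_2(\C)$, and the consequent absence of nilpotents in its centralizer, is the crux; everything else is the signature bookkeeping already rehearsed in Section~\ref{section3}.
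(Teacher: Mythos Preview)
Your proposal is correct in outline and reaches the right contradiction, but it takes a considerably longer route than the paper's proof. The paper dispenses entirely with the case split on $\mg(z,A_{\mg}z)$, the invocation of Lemma~\ref{compact-element}, and the structural analysis of compact Cartan subalgebras. Instead, once one knows (as in Lemma~\ref{z-Az-lightlike}) that $A_{\mg}z$ is nilpotent, the paper simply observes that $[z,A_{\mg}z]=\ad_z(A_{\mg}z)=A_{\mg}(\ad_z z)=0$ by Lemma~\ref{killing-equivalent}, so $A_{\mg}z$ lies in the centralizer of $z$; and in $\g{sl}_2(\C)$ the centralizer of any non-nilpotent element $z$ is exactly $\C z$ (as a $2\times 2$ traceless complex matrix, such a $z$ is diagonalizable with distinct eigenvalues, hence regular). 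Thus $A_{\mg}z=\lambda z$ for some $\lambda\in\C$, forcing $z$ itself to be nilpotent and contradicting Proposition~\ref{neither-semisimple-nilpotent} directly.

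Your approach buys generality in spirit---the detour through Lemma~\ref{compact-element} and the signature analysis on $[z,\g{g}]$ is what one would attempt in a higher-rank simple algebra where centralizers are not one-complex-dimensional---while the paper's approach exploits the low rank of $\g{sl}_2(\C)$ to finish in one line. Note also that your first pass at the final step, arguing via Jordan parts that $A_{\mg}z$ must be central in $\g{g}$, does not work as written: a compact $z$ is already semisimple with trivial nilpotent Jordan part, so commuting with its Jordan parts is just commuting with $z$ and yields no centrality. Your self-correction---that $c_{\g{g}}(z)$ is a Cartan subalgebra and hence contains no nonzero nilpotent, forcing $A_{\mg}z=0$---is the right fix, and is in fact exactly the paper's observation $c_{\g{g}}(z)=\C z$ specialized to compact $z$.
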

\begin{proof}
Let $Z \in \X_L(SL_2(\C))$ be a Killing vector field such that $\mg(z,z) \leq 0$. Having Proposition~\ref{timelike-killing}, we just need to consider the case $\mg(z,z) = 0$. In this case, as in the proof Lemma~\ref{z-Az-lightlike}, it follows that if there exists an unbounded solution for the Euler equation, then $A_{\mg}z$ is nilpotent. On the other hand, since, by Lemma~\ref{killing-equivalent}, $A_{\mg} \circ \ad_z = \ad_z \circ A_{\mg}$, we get $[z , A_{\mg}z] = 0$ which means $A_{\mg}z = \lambda z$ for some $\lambda \in \C$. So, $z$ is also nilpotent; contrary to Proposition~\ref{neither-semisimple-nilpotent}. Hence, all solutions of the Euler equation are bounded and $\mg$ is geodesically complete by Theorem~\ref{completeness-equivalences}.
\end{proof}

Before we continue with spacelike case, we show that given a Lorentzian metric $\mg$ on a semisimple Lie group $G$, the adjoint action of $G$ provides a family of Lorentzian metrics on $G$ with the same behavior as the metric $\mg$ in geodesic completeness. 

Suppose that $\mg$ is a left-invariant metric on a semisimple Lie group $G$. For each $g\in G$, $\mg_g(x,y):=\K(x,\Ad_g \circ A_{\mg} \circ \Ad_{g^{-1}} (y))$ is a Lorentzian bilinear form on $\g{g}$ and induces a Lorentzian left-invariant metric on $G$.  

\begin{lemma}\label{AdG-family}
\begin{itemize}
\item[(i)] For any $g\in G$, the metric $\mg_g$ is complete if and only if $\mg$ is complete.
\item[(ii)] An element $x \in \g{g}$ is an idempotent or a zero of the Euler equation of $\mg$ if and only if $\Ad_g x$ is, respectively, an idempotent or a zero of the Euler equation of $\mg_g$.
\end{itemize}
\end{lemma}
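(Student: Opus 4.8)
The plan is to exploit the fact that the map $\Phi_g := \Ad_g : \g{g} \to \g{g}$ is a Lie algebra automorphism and that, by construction, it intertwines the associated operators: $A_{\mg_g} = \Ad_g \circ A_{\mg} \circ \Ad_{g^{-1}}$, hence also $A_{\mg_g}^{-1} = \Ad_g \circ A_{\mg}^{-1} \circ \Ad_{g^{-1}}$. The key computation is that $\Phi_g$ carries the Euler field of $\mg$ to the Euler field of $\mg_g$. Indeed, for $x \in \g{g}$, using that $\Ad_g$ is a bracket automorphism,
\[
F_{\mg_g}(\Ad_g x) = [\Ad_g x,\, A_{\mg_g}^{-1}\Ad_g x] = [\Ad_g x,\, \Ad_g A_{\mg}^{-1} x] = \Ad_g\big([x, A_{\mg}^{-1}x]\big) = \Ad_g\, F_{\mg}(x).
\]
Thus $\Phi_g$ is a linear conjugacy between the two quadratic vector fields $F_{\mg}$ and $F_{\mg_g}$.

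From this intertwining both claims are essentially immediate. For $(ii)$: $x$ is a zero of $F_{\mg}$ iff $F_{\mg_g}(\Ad_g x) = \Ad_g F_{\mg}(x) = 0$ iff $\Ad_g x$ is a zero of $F_{\mg_g}$, and similarly $F_{\mg}(x) = x$ iff $F_{\mg_g}(\Ad_g x) = \Ad_g x$, i.e. $\Ad_g x$ is an idempotent of $F_{\mg_g}$; since $\Ad_g$ is invertible these are genuine equivalences. For $(i)$: because $\Phi_g$ is a linear isomorphism conjugating the two vector fields, $u(t)$ is an integral curve of $F_{\mg}$ if and only if $\Ad_g u(t)$ is an integral curve of $F_{\mg_g}$, with the same maximal interval of definition; hence all integral curves of $F_{\mg}$ are complete iff all integral curves of $F_{\mg_g}$ are complete, and the conclusion follows from Theorem~\ref{completeness-equivalences}. (One can phrase the same argument geometrically: the left-invariant diffeomorphism realizing conjugation by $g$ on $G$ pulls $\mg_g$ back to $\mg$, so the two metrics are isometric and in particular share geodesic completeness — but the algebraic version via the Euler field is the cleanest and is what is actually used later.)

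There is no real obstacle here; the only point requiring a line of care is verifying the identity $A_{\mg_g}^{-1} = \Ad_g \circ A_{\mg}^{-1} \circ \Ad_{g^{-1}}$, which follows from $\mg_g^*(x,y) = \K(x, A_{\mg_g}^{-1}y)$ together with $\K$-invariance of $\Ad_g$ (so that $\K(x, \Ad_g A_{\mg}^{-1}\Ad_{g^{-1}}y) = \K(\Ad_{g^{-1}}x, A_{\mg}^{-1}\Ad_{g^{-1}}y)$ matches the defining relation), and from the fact that $\Ad_g$ is an involution-free automorphism so inverting the conjugate operator conjugates the inverse. Everything else is formal manipulation with the bracket-preserving property of $\Ad_g$.
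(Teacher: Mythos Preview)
Your proof is correct and follows essentially the same approach as the paper: both establish the conjugacy relation $F_{\mg_g}(\Ad_g x) = \Ad_g F_{\mg}(x)$ via the bracket-preserving property of $\Ad_g$ and the identity $A_{\mg_g}^{-1} = \Ad_g \circ A_{\mg}^{-1} \circ \Ad_{g^{-1}}$, then read off both parts from it. The paper phrases the computation in terms of a solution curve $u(t)$ and its image $v(t)=\Ad_g u(t)$, whereas you state the vector-field conjugacy pointwise first; the content is identical.
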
 
\begin{proof}
Suppose that $u(t)$ is a solution of the Euler equation of $\mg$, that is $[u(t) , A_{\mg}^{-1}u(t)] = u'(t)$, and put $v(t) = \Ad_gu(t)$. Then, we have
\begin{align}\label{eq8}
v'(t) = \Ad_gu'(t) & = \Ad_g[u(t) , A_{\mg}^{-1}u(t)] \nonumber \\
& = [\Ad_gu(t) , \Ad_g \circ A_{\mg}^{-1} \circ \Ad_{g^{-1}}(\Ad_gu(t))] \nonumber \\
& = [v(t) , \Ad_g \circ A_{\mg}^{-1} \circ \Ad_{g^{-1}} (v(t))].
\end{align}

Since $\Ad_g \circ A_{\mg} \circ \Ad_{g^{-1}}$ is the associated isomorphism of $\mg_g$, it follows from \eqref{eq8} that the Euler equation of $\mg$ has complete solutions if and only if solutions of the Euler equation of $\mg_g$ are complete So part (i) follows from Theorem~\ref{completeness-equivalences}. 

For part (ii), if $[x , A_{\mg}^{-1} x]$ is either $0$ or $x$, then one can see from \eqref{eq8} that $[\Ad_gx , \Ad_g \circ A_{\mg}^{-1} \circ \Ad_{g^{-1}}(\Ad_gx)]$ is either $0$ or $\Ad_gx$, respectively. The converse follows from $\mg = (\mg_g)_{g^{-1}}$. 
\end{proof}

As mentioned in Subsection~\ref{semisimple-algebras-euler-eq}, an idempotent of the Euler field $F_{\mg}$ is an element $x \in \g{g}$ which is the initial velocity of a non-constant geodesic $\gamma$ starting at the identity and running on a one parameter subgroup of $G$. In this case, the graph of the reflected curve $u(t)$ of $\gamma(t)$ is just the radial half-line $\R^+ x$.

To establish our next result, we introduce a family of curves which includes the above mentioned reflected curves as an special case.

\begin{definition}
Let $F$ be a homogeneous quadratic vector field on a vector space $\V$, and $u(t)$ be an integral curve of $F$. Then, $u(t)$ is called a generalized conical spiral (GCS) of $F$, if there exist $r, s \in \R$ with $r>0$, so that $u(s) = ru(0)$.
\end{definition}

Note that since $F$ is homogeneous quadratic, one gets $u(ks) = r^k u(0)$ for any integer $k$. Here are some features of GCS curves.
\begin{proposition}
Let $F$ be a homogeneous quadratic vector field on a vector space $\V$. Let $x\in \V$ and $u:I \subset \R \to \V$ be a GCS of $F$ with $u(s) = ru(0)$ for some $r, s \in \R$. Then, the followings hold:
\begin{itemize}
    \item[(i)] if $F(x)=x$; $\R x$ is a GCS,
    \item[(ii)] if $r = 1$; $u(t)$ is an ordinary $s$-periodic curve and, therefore, complete,
    \item[(iii)] if $r \neq 1$; $u(t)$ is incomplete.
\end{itemize}
Moreover, when $\V=\g{g}$ is a Lie algebra with Lie group $G$ and $F=F_{\mg}$ is the Euler field of a left-invariant Lorentzian metric $\mg$ on $G$. Then,
\begin{itemize}
\item[(iv)] if $r \neq 1$; $u(t)$ is nilpotent and $\mg^*(u(t) , u(t)) = 0$, for any $t\in I$.
\end{itemize}
\end{proposition}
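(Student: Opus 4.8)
The plan is to handle the four parts in order, leaning on homogeneity of $F$ and the scaling symmetry it induces on integral curves. The key elementary fact is that if $F$ is homogeneous quadratic and $u(t)$ is an integral curve, then for any $c \neq 0$ the reparametrized rescaling $t \mapsto c\, u(ct)$ is again an integral curve; this follows by differentiating and using $F(cv) = c^2 F(v)$. I would state this scaling lemma first, since all three of (ii), (iii), (iv) use it.

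For (i): if $F(x) = x$, then as recalled in Subsection~\ref{semisimple-algebras-euler-eq} the integral curve through $x$ is $u(t) = x/(1-t)$, whose image is the open ray $\R^{+}x$ (together with $\R^{-}x$ from the other component of the parameter interval). For any $s < 0$ we have $u(s) = r\,u(0)$ with $r = 1/(1-s) > 0$ and $r \neq 1$, so $\R x$ is indeed a GCS. For (ii): suppose $u(s) = u(0)$, i.e. $r = 1$. The curve $t \mapsto u(t+s)$ is an integral curve of $F$ with the same initial value $u(0)$, hence by uniqueness of solutions equals $u(t)$ on the overlap of domains; iterating, $u$ extends to an $s$-periodic curve defined on all of $\R$, so it is complete.

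For (iii): suppose $r \neq 1$; by replacing $u$ by the reversed/reparametrized curve if necessary we may assume $0 < r < 1$ and $s > 0$ (the other cases being symmetric). Applying the scaling lemma with $c = r$, the curve $w(t) := r\,u(rt)$ is an integral curve, and $w(0) = r\,u(0) = u(s)$, so by uniqueness $w(t) = u(t+s)$ wherever both are defined. Thus on a neighbourhood of its right endpoint the curve $u$ satisfies $u(t+s) = r\,u(rt)$; unwinding this self-similar relation, the maximal existence time $b$ of $u$ must satisfy $b = s + b/r$, forcing $b = sr/(r-1)$, which is finite since $0 < r < 1$ — wait, that is negative, so in fact one gets finiteness from the reversed inequality; the clean way is: the relation $u(ks) = r^{k}u(0)$ with $r^k \to 0$ shows $u(t_k) \to 0$ along $t_k = ks$, and since $F(0) = 0$ is an equilibrium while $u$ is non-constant, $u$ cannot converge to $0$ at an interior point, so the $t_k$ must accumulate at an endpoint of the (necessarily bounded on that side) maximal interval; combined with the self-similarity this pins the interval to be a bounded half-line, so $u$ is incomplete. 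I expect this part to be the main obstacle: getting the bookkeeping of which endpoint, and why the accumulation forces finiteness rather than $t_k \to \pm\infty$, requires care, and one should probably argue via Proposition~\ref{completeness-criterion} that boundedness of $\{u(t_k)\}$ near an endpoint would contradict incompleteness unless the endpoint is at finite parameter — precisely, if $b = +\infty$ then $\{u(ks)\}$ is a bounded sequence diverging in time, and one must rule out that this is compatible with the scaling, which it is not because the scaled copies $r^{-k}u(\cdot)$ would then also be global, contradicting that $u(-ks) = r^{-k}u(0) \to \infty$ exits every bounded set in finite backward time.

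Finally (iv): assume $r \neq 1$. The functions $\mg^{*}(x,x)$ and $\operatorname{tr}(\ad_x^m)$, $m \in \mathbb{N}$, are first integrals of the Euler equation (Subsection~\ref{semisimple-algebras-euler-eq}), so they are constant along $u$. Since $\mg^{*}$ is homogeneous of degree $2$ and $\mg^{*}(u(s),u(s)) = \mg^{*}(u(0),u(0))$ while $u(s) = r\,u(0)$, we get $r^{2}\mg^{*}(u(0),u(0)) = \mg^{*}(u(0),u(0))$, and as $r^{2} \neq 1$ this forces $\mg^{*}(u(0),u(0)) = 0$, hence $\mg^{*}(u(t),u(t)) = 0$ for all $t$. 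Likewise $\operatorname{tr}(\ad_{u(0)}^{m})$ is homogeneous of degree $m$, so the same argument gives $\operatorname{tr}(\ad_{u(t)}^m) = 0$ for every $m \geq 1$ and every $t$; a semisimple Lie algebra element with all $\operatorname{tr}(\ad_x^m) = 0$ is nilpotent, so $u(t)$ is nilpotent for all $t \in I$. This completes the proof.
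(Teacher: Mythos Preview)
Your arguments for (i), (ii), and (iv) are correct and coincide with the paper's: the paper declares (i) and (ii) trivial and proves (iv) exactly via the homogeneity of the first integrals $\mg^*(x,x)$ and $\tr(\ad_x^m)$, just as you do. The scaling lemma you isolate is also the mechanism underlying the paper's proof of (iii).

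The gap is in (iii). Your self-similarity relation $u(t+s)=r\,u(rt)$ is correct, and from it the paper's argument is immediate \emph{once you normalize to $r>1$ rather than $r<1$}: iterating, one gets $u(s_k)=r^{k}u(0)$ with $s_k=s\sum_{j=0}^{k-1}r^{-j}$, so for $r>1$ the $s_k$ converge to the finite value $sr/(r-1)$ while $\|u(s_k)\|\to\infty$, forcing the forward endpoint $b\le sr/(r-1)<\infty$. Your fixed-point equation $b=s+b/r$ is the shadow of this same computation and gives a positive finite answer precisely when $r>1$. The paper then simply remarks that for $r<1$ the symmetric argument yields backward incompleteness. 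So the fix is a one-word change in your normalization.

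Your fallback argument, however, should be discarded. Two issues: first, the relation $u(ks)=r^{k}u(0)$ does \emph{not} follow from homogeneity---test it on the idempotent solution $u(t)=x/(1-t)$; the correct times are the $s_k$ above, not $ks$. Second, even if one had $u(t_k)\to 0$ along $t_k\to\infty$, an integral curve is perfectly allowed to approach the equilibrium $0$ as $t\to+\infty$; this yields no contradiction and no bound on the domain. The same objection kills your final sentence about $u(-ks)=r^{-k}u(0)\to\infty$: unboundedness along a sequence $t_k\to-\infty$ is entirely compatible with backward completeness, so nothing is concluded.
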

\begin{proof}
Parts (i) and (ii) are trivial. For part (iii), let $r>1$ and $[0,b)$ be the maximal domain of the curve $u(t)$. For each non-negative integer $k$, define $\tilde{u}:=u\big|_{[0,s]}$ and let $s_k = \sum_{j=0}^{k-1}\frac{s}{r^j}$. Then one gets
\[
u(t) = r^k \tilde{u}(r^k(t-s_k)), \quad t\in [s_k , s_k + \frac{s_k}{r^k}].
\]
This yields $b = sr/(r-1)$. So, $u$ is incomplete.

Similarly, if $r<1$ then one can see that $u(t)$ can not be extended beyond a finite time in negative direction. 

Finally, to prove (iv), let $u(t)$ be a GCS curve of the Euler field $F_{\mg}(x) = [x , A_{\mg}x]$. We have $\mg^*(u(s) , u(s)) = r^2 \mg^*(u(0) , u(0))$. On the other, since $\mg^*(x,x)$ is a first integral, one gets $\mg^*(u(s) , u(s)) = \mg^*(u(0) , u(0))$. Putting together, we conclude that $(r^2-1)\mg^*(u(0) , u(0))=0$, since $r \neq 1$, so $\mg^*(u(0) , u(0))=0$. Since  $\mg^*(u(t) , u(t))$ is constant, we get $\mg^*(u(t) , u(t))=0$ for any $t\in I$.

In a similar way, one can use the first integral $\tr(\ad_{u(t)}^m)$ to show that $u(t)$ is nilpotent.
\end{proof}

We are now in a position to prove Theorem~\ref{MainTheorem2} for a spacelike vector field $Z$. 
\begin{proposition}\label{spacelike-killing-sl2}
Let $\mg$ be a left-invariant Lorentzian metric on $SL_2(\C)$. If there exists a Killing vector field $Z\in \X_L(SL_2(\C))$, then $\mg$ is geodesically complete if and only if its Euler field has no GCS.
\end{proposition}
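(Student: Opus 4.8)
The plan is to argue entirely in the Lie algebra $\g{g}=\g{sl}_2(\C)$ through the Euler field. The ``if'' direction is immediate: if $\mg$ is complete then $F_{\mg}$ is complete by Theorem~\ref{completeness-equivalences}, so it can carry no GCS with $r\neq 1$, these — in particular the radial line of an idempotent — being incomplete by the properties of generalized conical spirals established above; the remaining case $r=1$, i.e.\ a nonconstant periodic orbit, I expect to rule out as a by-product of the structural analysis done for the converse. So the substance of the statement is the ``only if'' direction.

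Assume $\mg$ is incomplete. By Theorem~\ref{completeness-equivalences} and Proposition~\ref{completeness-criterion} there is a maximal solution $u\colon[0,b)\to\g{g}$ of \eqref{euler-equation} with $b<\infty$ and $\|u(t)\|\to\infty$ as $t\to b$. Pick $t_m\nearrow b$ with $u(t_m)/\|u(t_m)\|\to\theta$ for some unit $\theta\in\g{g}$. Dividing the first integrals $\mg^*(u,u)$ and $\tr(\ad_u^{k})$, $k\in\mathbb{N}$, by suitable powers of $\|u(t_m)\|$ and letting $m\to\infty$ forces $\mg^*(\theta,\theta)=0$ and $\theta\in\N$; dividing the extra first integral $\K(u,z)$ supplied by the Killing field (Proposition~\ref{new-first-integral}) by $\|u(t_m)\|$ gives $\K(\theta,z)=0$ as well. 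Thus $\theta\in S:=\N\cap\Lambda^*_{\mg}\cap\{x:\K(x,z)=0\}\neq\emptyset$, and since $\N$, $\Lambda^*_{\mg}$ and $\{\K(\cdot,z)=0\}$ are each unions of level sets of first integrals, hence invariant under $F_{\mg}$, the solution $\bar u$ with $\bar u(0)=\theta$ stays in $S$ for all time.

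The heart of the argument is to describe the flow of $F_{\mg}$ on $S$ and extract a GCS from it. Here one uses Lemma~\ref{killing-equivalent} — so that $A_{\mg}$ commutes with $\ad_z$ and preserves the $\K$-orthogonal splitting $\g{g}=c_{\g{g}}(z)\oplus[z,\g{g}]$ — together with the explicit geometry of $\g{sl}_2(\C)$, in which $\N$ is the minimal nilpotent orbit, a complex quadric cone, and $\N\cap\Lambda^*_{\mg}$ has the shape of Theorem~\ref{MainTheorem3}. Setting aside two exceptional cases — non-transversality of $\N$ and $\Lambda^*_{\mg}$ along $\R\theta$, and $\theta$ a nonzero multiple of an idempotent or a zero of $F_{\mg}$, each of which is handled by hand since then $F_{\mg}$ already has a GCS (the ray $\R\theta$, with scaling $\alpha(t)=1/(1-\lambda t)$) or one reaches a contradiction — one shows that $S$ is a cone on which, after projecting off the dilation direction, i.e.\ restricting the Euler equation to a hypersurface transverse to the rays of $S$, the flow becomes a linear system $X'=AX$. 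Producing this linearization — choosing the transverse hypersurface and rewriting \eqref{euler-equation} in coordinates adapted to $\ad_z$ so that the quadratic nonlinearity collapses — is the main obstacle.

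One then finishes by computing the $\omega$-limit set of $X'=AX$. Since $u$ has no $\omega$-limit point, neither does the projected trajectory through $[\theta]$, and this — via the computability of $\omega$-limit sets of linear systems, of which Proposition~\ref{omega-limit-set} is the relevant special case — forces $A$ to have an eigenvalue with nonzero real part. Running the projected linear flow for one period and undoing the projection, the homogeneity of $F_{\mg}$ gives $\bar u(s)=r\,\bar u(0)$ for some $r\neq 1$; that is, $\bar u$ is a GCS of $F_{\mg}$, contradicting the hypothesis and so proving completeness. The final assertion of Theorem~\ref{MainTheorem2} — that $F_{\mg}$ can have a GCS only when $Z$ is spacelike — then follows by combining this proposition with Proposition~\ref{causal-killing-sl2}, which excludes GCS in the causal case.
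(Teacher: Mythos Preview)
Your overall strategy matches the paper's: reduce to the Euler flow on $\g{sl}_2(\C)$, locate $S=\N\cap\Lambda^*_{\mg}\cap\{\K(\cdot,z)=0\}$ via the first integrals, project to obtain a linear system, and read off a GCS. But what you have written is a plan, not a proof: you explicitly flag the linearization as ``the main obstacle'' and do not carry it out, and the paper's argument is precisely this computation. The paper normalizes $z$ to $\i\xi$ via $\Ad_G$, obtains explicit block forms for $\ad_z$ and $A^{-1}_{\mg}$ (Lemma~\ref{lema1-spacelike=sl2c}), writes every solution on the hyperplane $\{\K(x,\i\xi)=0\}$ as $u=f\xi+v$ with $v'=fV(v)$ for a concrete linear operator $V$ on $[\xi,\g{sl}_2(\C)]$ (Lemma~\ref{lema2-spacelike=sl2c}), and computes the eigenvalues of $V$ in terms of a single discriminant $d=(d_1-a)(d_1-b)$. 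The dichotomy $d\le0$ versus $d>0$ then decides everything: for $d\le0$ one checks directly that $S=\{0\}$ or all solutions are bounded, hence complete and with no GCS; for $d>0$ the first integrals force $S$ to be two copies of a cone $S^1\times\R^+$, the eigenvalues of $V$ have nonzero real part, and every trajectory in $S$ spirals outward as an incomplete GCS.

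There is also a real gap in your extraction of the GCS. Your phrase ``running the projected linear flow for one period'' presupposes a circle in the base of $S$, which you have not established; this is exactly what the explicit identification of $S$ with $S^1\times\R^+$ supplies. Moreover, the step ``since $u$ has no $\omega$-limit point, neither does the projected trajectory through $[\theta]$'' conflates two different curves: $\theta$ is a limit \emph{direction} of the blowing-up solution $u$, not a point on its orbit, and the trajectory $\bar u$ through $\theta$ is a separate solution whose $\omega$-behaviour is not a priori inherited from $u$. The paper avoids this entirely by arguing forward: once $d>0$ is known, \emph{every} nonzero initial condition in $S$ already yields an incomplete GCS, so no information needs to be transferred from $u$ to $\bar u$. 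Finally, note that Proposition~\ref{omega-limit-set} runs in the opposite direction to the one you invoke: the paper first computes the eigenvalues and then concludes the $\omega$-limit set is empty, rather than inferring spectral information from emptiness.
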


We break the proof of Proposition~\ref{spacelike-killing-sl2} into several steps. In Lemma~\ref{lema1-spacelike=sl2c} we obtain convenient representations for $A_{\mg}^{-1}$ and $\ad_Z$. We then consider solutions of the Euler equation in the zero level set of the first integral $\K(x,z)$ and show that these solutions project onto solutions of a linear system on $Im(\ad_z)$, in Lemma~\ref{lema2-spacelike=sl2c}. In Lemmas~\ref{lema3-spacelike=sl2c} and \ref{lema4-spacelike=sl2c} we examine completeness of the Euler field on $\g{sl}_2(\C)$ by investigating the linear system on $Im(\ad_z)$.

In the following lemmas we assume the setting of Proposition~\ref{spacelike-killing-sl2}. 

\begin{lemma}\label{lema1-spacelike=sl2c}
Let $c_{\g{g}}(z)$ be the centralizer of $z$ in $\g{\g{sl}_2(\C)}$. 
\begin{itemize}
    \item[(i)] $\g{sl}_2(\C)$ is decomposed orthogonally as $c_{\g{g}}(z) \oplus [z , \g{sl}_2(\C)]$. This decomposition is preserved by $A_{\mg}$ and $\mg$ is non-degenerate on each factor.
    \item[(ii)] The metric $\mg$ is positive definite on $[\xi , \g{sl}_2(\C)]$ and in an orthonormal basis for $[\xi , \g{sl}_2(\C)]$, $\ad_z$ and $A_{\mg}^{-1}$ can be represented as:
\begin{equation}\label{eq18}
\ad_z\big|_{[\xi , \g{g}]} =
  \left(
  \begin{array}{@{}c|c@{}}
  \begin{array}{@{}cc@{}}
     0  &  -c \\
     c  &  0
  \end{array} & 0 \\ \hline
  0 & \begin{array}{cc}
      0  & -c \\
      c &  0
  \end{array}
  \end{array}
  \right), \quad 
  A_{\mg}^{-1}\big|_{[\xi , \g{g}]} = 
    \left(
  \begin{array}{@{}cccc@{}}
   a &  &  & 0 \\
    & a &  &  \\
    &  & b &  \\
   0 &  &  & b
  \end{array}
  \right), \ \ ab < 0 .
\end{equation}
 Moreover, $A_{\mg}^{-1}$ on $c_{\mg}(z)$ can be written as  
 \begin{equation}\label{eq21}
  A_{\mg}^{-1}\big|_{c_{\g{g}}(\xi)} = \begin{pmatrix} 
  d_1 & d_2 \\ -d_2 & d_3 \end{pmatrix} , \quad d_1<0 .
  \end{equation}
\end{itemize}
\end{lemma}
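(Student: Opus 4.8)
The plan is to exploit the two algebraic facts already established for a left-invariant Killing vector field on a simple Lie group of dimension $>3$: by Lemma~\ref{killing-equivalent}, $A_{\mg}$ commutes with $\ad_z$ and hence $(\ad_z)^* = -\ad_z$ with respect to $\mg$; and by Proposition~\ref{neither-semisimple-nilpotent}, $\ad_z$ is neither semisimple nor nilpotent. For part (i), since $\ad_z$ is skew-symmetric w.r.t.\ both $\K$ and $\mg$, the subspaces $c_{\g{g}}(z) = \ker(\ad_z)$ and $[z,\g{sl}_2(\C)] = \mathrm{Im}(\ad_z)$ are $\K$- and $\mg$-orthogonal, exactly as in the computation $\mg(x,[z,y]) = \K(A_{\mg}x,[z,y]) = \K([A_{\mg}x,z],y) = 0$ already used in Lemmas~\ref{compact-element} and \ref{z-Az-lightlike}. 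One then needs $\g{sl}_2(\C) = c_{\g{g}}(z) \oplus [z,\g{sl}_2(\C)]$ as a direct sum; this follows once we know $\ad_z$ has nonzero eigenvalues, which it does because it is not nilpotent. Non-degeneracy of $\mg$ on each factor is then automatic from the orthogonal decomposition plus non-degeneracy of $\mg$ on all of $\g{sl}_2(\C)$, and $A_{\mg}$ preserves the decomposition because it commutes with $\ad_z$.

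For part (ii), the strategy is to pin down the Jordan/eigenvalue structure of $\ad_z$ on the six-real-dimensional Lie algebra $\g{sl}_2(\C)$. Since $z \in \g{sl}_2(\C)$ and $\g{sl}_2(\C)$ acting on itself decomposes, as a complex Lie algebra, according to the $\mathfrak{sl}_2$-weights $\{2,0,-2\}$ times the eigenvalue of the complex $2\times2$ matrix $z$, and since $\ad_z$ is not semisimple, $z$ must be (up to scaling and conjugation) a nonzero multiple of a regular semisimple element times a nonzero complex scalar — more precisely, the key point is that over $\C$, $\ad_z$ has eigenvalues $\{0, \pm\lambda\}$ for some $\lambda \in \C^\times$, each appearing with its natural multiplicity, but viewed as a real operator these become the real block structure. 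Because $\ad_z$ is not semisimple over $\R$, $\lambda$ cannot be purely real; because (Proposition~\ref{neither-semisimple-nilpotent}) it is not nilpotent, $\lambda \neq 0$. Restricting to $[z,\g{sl}_2(\C)]$, the operator $\ad_z$ becomes a real operator with eigenvalues $\pm\lambda$, each a pair of complex conjugate nonzero eigenvalues; after rescaling $z$ (which does not change whether $Z$ is Killing) we may take $\lambda = \mathrm{i}c$ purely imaginary — here one should use that $\mg$ restricted to $[z,\g{sl}_2(\C)]$ must be \emph{definite}: by the sign argument from Lemma~\ref{compact-element} (the span of $z$ and $A_{\mg}^2 z$ is Lorentzian and its orthocomplement, which contains $[z,\g{sl}_2(\C)]$, is spacelike when $z$ is lightlike; in the spacelike case for $z$ one argues similarly), so $\ad_z$ being $\mg$-skew on a definite subspace forces purely imaginary eigenvalues, hence the displayed $2\times2$-rotation block form with a single parameter $c$ repeated twice. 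The relation $ab<0$ then comes from $\mg$ being \emph{Lorentzian} overall with $c_{\g{g}}(z)$ carrying the remaining signature: since $\mg$ is positive definite on the four-dimensional $[z,\g{sl}_2(\C)]$ and Lorentzian on $\g{sl}_2(\C)$, $\mg$ must be Lorentzian (signature $(1,1)$) on $c_{\g{g}}(z)$; but wait — that would make $A_{\mg}^{-1}\big|_{[\xi,\g{g}]}$ definite, contradicting $ab<0$, so in fact the careful bookkeeping is that $\mg$ is Lorentzian on $[z,\g{sl}_2(\C)]$ and definite on $c_{\g{g}}(z)$, forcing the split signs $a>0>b$ (up to relabeling) in the diagonal form of $A_{\mg}^{-1}$, which is $\K$-symmetric and commutes with the rotation blocks; the block-diagonal $\diag(a,a,b,b)$ form is forced by commutation with $\ad_z$, since an operator commuting with the rotation $\begin{pmatrix}0&-c\\c&0\end{pmatrix}\oplus\begin{pmatrix}0&-c\\c&0\end{pmatrix}$ and self-adjoint w.r.t.\ the form must be scalar on each $2$-dimensional isotypic piece, and $\K$-symmetry pins the off-diagonal coupling. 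Finally \eqref{eq21}: $A_{\mg}^{-1}$ restricted to the $2$-dimensional $c_{\g{g}}(z)$ is $\K$-symmetric, and writing it in a basis adapted to the signature of $\K\big|_{c_{\g{g}}(z)}$ (which is indefinite, since $c_{\g{g}}(z)$ contains $z$ with $\mg(z,z)>0$ and the Killing form there is indefinite) yields the $\begin{pmatrix}d_1 & d_2\\ -d_2 & d_3\end{pmatrix}$ shape, with $d_1<0$ encoding the Lorentzian signature carried by this factor.

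The main obstacle, and the place where real care is needed, is the signature bookkeeping in part (ii): one must correctly locate which of $[z,\g{sl}_2(\C)]$ and $c_{\g{g}}(z)$ is definite and which carries the timelike direction, handling the three cases ($Z$ spacelike — the main case of interest here — versus the residual lightlike/timelike checks), and then verify that commutation of $A_{\mg}$ with the purely-imaginary-eigenvalue operator $\ad_z$, together with $\K$-self-adjointness, genuinely forces the rigid block forms \eqref{eq18} and \eqref{eq21} rather than something more general. Concretely I would: (1) establish the abstract decomposition and orthogonality (part (i)); (2) use non-semisimplicity and non-nilpotency of $\ad_z$ plus the structure of $\g{sl}_2(\C)$ as a real $6$-dimensional algebra to get eigenvalues $\{0,\pm\mathrm{i}c\}$ of $\ad_z$ with $c\neq 0$ after rescaling $z$; (3) deduce $\mg$ is positive definite on $[z,\g{sl}_2(\C)]$ from the signature constraint, getting the rotation-block form of $\ad_z$; (4) run the commutant-plus-self-adjointness argument to diagonalize $A_{\mg}^{-1}$ in the stated block shape and read off $ab<0$ and $d_1<0$ from the global Lorentzian signature. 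Steps (1), (3) mostly reuse computations already in Lemmas~\ref{compact-element} and \ref{z-Az-lightlike}; step (4) is the genuinely new linear-algebra content.
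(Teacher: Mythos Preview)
Your plan for part~(i) is fine and close to the paper's: orthogonality of $c_{\g g}(z)$ and $[z,\g{sl}_2(\C)]$ with respect to both $\K$ and $\mg$ follows from skew-symmetry of $\ad_z$, and $A_{\mg}$ preserves the splitting because it commutes with $\ad_z$. The paper makes this concrete by first conjugating $z$ (via Lemma~\ref{AdG-family}) to a diagonal matrix $\diag(\lambda,-\lambda)$, but the abstract argument works too.

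Part~(ii), however, contains a genuine confusion that derails the argument. You write that ``the careful bookkeeping is that $\mg$ is Lorentzian on $[z,\g{sl}_2(\C)]$ and definite on $c_{\g g}(z)$''; this is the \emph{opposite} of what is true and of what the lemma asserts. The metric $\mg$ is positive definite on the four-dimensional factor $[\xi,\g{sl}_2(\C)]$ and Lorentzian on the two-dimensional $c_{\g g}(\xi)$. The condition $ab<0$ has nothing to do with the signature of $\mg$ on $[\xi,\g{sl}_2(\C)]$; it comes from the signature of the \emph{Killing form} there, which is $(2,2)$. Concretely, since $\mg(x,y)=\K(x,A_{\mg}y)$, in a $\mg$-orthonormal basis the matrix of $A_{\mg}^{-1}$ coincides with the Gram matrix of $\K$, and $\K$ having signature $(2,2)$ on $[\xi,\g{sl}_2(\C)]$ forces two positive and two negative eigenvalues, hence $\diag(a,a,b,b)$ with $ab<0$ once the eigenspaces are shown to be two-dimensional.

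The step you hand-wave (``in the spacelike case one argues similarly'' to Lemma~\ref{compact-element}) is precisely the substantive content. Lemma~\ref{compact-element} assumes incompleteness, which is not available here. The paper instead argues by contradiction: if $\mg$ were Lorentzian on $[\xi,\g{sl}_2(\C)]$, then $A_{\mg}^{-1}$, being $\mg$-symmetric, would have one of four canonical forms on a Lorentzian $4$-space; using that each eigenspace of $A_{\mg}^{-1}$ has dimension $\geq 2$ (because $\ad_z$ preserves it and has no real eigenvectors there), one checks in each case that the resulting signature of $\K=\mg(\cdot,A_{\mg}^{-1}\cdot)$ cannot be $(2,2)$. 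This elimination is the heart of the proof, and your commutant-plus-self-adjointness shortcut does not replace it: commuting with two identical rotation blocks leaves an $8$-dimensional commutant, so commutation alone does not force $A_{\mg}^{-1}$ into the shape $\diag(a,a,b,b)$. The paper also needs a separate bracket computation, using the first integral $\K([y,A_{\mg}^{-1}y],z)=0$ of Proposition~\ref{new-first-integral}, to show that the $\mg$-orthonormal basis can be taken of the special form $\{e_1,e_2,\i e_1,\i e_2\}$, which is what finally yields the rotation-block form of $\ad_z$ and the identification $z=\i\xi$.
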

\begin{proof}
Let $Z \in \X_L(SL_2(\C))$ be a spacelike Killing vector field, that is, $\mg(z,z) > 0$. From Proposition~\ref{neither-semisimple-nilpotent} we know that $z$ is not nilpotent which means that $\tr(\ad_z^2) \neq 0$. Using Lemma~\ref{AdG-family} and having $\Ad_g(z) = gzg^{-1}$, one may assume that $z$ is represented by $z = \begin{pmatrix} \lambda & 0 \\ 0 & -\lambda \end{pmatrix}$, where $\lambda \in \C$ is pure imaginary. Now it can be easily seen that $\g{sl}_2(\C)$ has the orthogonal decomposition $\g{sl}_2(\C) = c_{\g{g}}(z) \oplus [z , \g{sl}_2(\C)]$. The fact that $A_{\mg}$ leaves this decomposition invariant and $\mg$ is non-degenerate on each factor, follows from  Proposition~\ref{neither-semisimple-nilpotent}. This proves part~(i).

For part (ii), let $\xi$ denote the semisimple element $\begin{pmatrix} 1 & 0 \\ 0 & -1 \end{pmatrix} \in \g{sl}_2(\C)$. Then $z = \lambda \xi$ and one has
\[
\g{sl}_2(\C) = c_{\g{g}}(z) \oplus [z , \g{sl}_2(\C)] = c_{\g{g}}(\xi) \oplus [\xi , \g{sl}_2(\C)] ,
\]
where $c_{\g{g}}(-)$ is a simpler notation used for the centralizer $c_{\g{sl}_2(\C)}(-)$.

Note that $c_{\g{g}}(z) = c_{\g{g}}(\xi)$ is a two-dimensional subspace spanned by $\{\xi, \i \xi\}$. The Killing form $\K$ has index $(1,1)$ on $c_{\g{g}}(\xi)$ and $(2,2)$ on $[\xi , \g{sl}_2(\C)]$. 

Suppose that $0 \neq \mu$ is an eigenvalue of $A_{\mg}^{-1}$ with corresponding eigenspace $W_\mu$. For any $x\in W_\mu$ with $\mg^*(x,x) \neq 0$, we get
\[
\K(x,x) = \mu \K(x , A_{\mg}^{-1}x) = \mg^*(x,x) \neq 0 .
\]
On the other hand, since $A_{\mg}z \circ \ad_z = \ad_z \circ A_{\mg}z$, we get $A_{\mg}^{-1}[z , x] = [z , A_{\mg}^{-1}x] = \mu [z , x]$, which implies that $[z , x] \in W_\mu$. The vectors $z$ and $[z , x]$ are linearly independent, for otherwise, $x$ would be nilpotent and one gets $0 = \mu \K(x,x) = \K(x,A_{\mg}^{-1}x) = \mg^*(x,x)$. So, dimension of $W_\mu$ is at least $2$.

Now, we show that $\mg$ is positive definite on $[\xi , \g{sl}_2(\C)]$. 

One can see that the isomorphism $A_{\mg}^{-1}$ is symmetric with respect to $\mg$. Therefore, if $\mg$ is negative definite on $[\xi , \g{sl}_2(\C)]$, $A_{\mg}^{-1}$ can be represented on $[\xi , \g{sl}_2(\C)]$ in the following four matrices (\cite{ONe83}, p.~261):  
\begin{align*} 
& (i): 
  \left(
  \begin{array}{@{}cccc@{}}
   a_1 &  &  & 0 \\
    & a_2 &  &  \\
    &  & a_3 &  \\
   0 &  &  & a_4
  \end{array}
  \right), & 
& (ii): 
  \left(
  \begin{array}{@{}c|c@{}}
  \begin{array}{@{}cc@{}}
     a  &  b \\
     -b  &  a
  \end{array} & 0 \\ \hline
  0 & \begin{array}{cc}
      c  &  \\
       &  d
  \end{array}
  \end{array}
  \right) , \\  
&(iii): 
  \left(
  \begin{array}{@{}c|c@{}}
  \begin{array}{@{}cc@{}}
     a  &  0 \\
     b  &  a
  \end{array} & 0 \\ \hline
  0 & \begin{array}{cc}
      c  &  \\
       &  d
  \end{array}
  \end{array}
  \right) , & 
&(iv):  
  \left(
  \begin{array}{@{}cccc@{}}
   a & 0 & 1 & 0 \\
   0 & a & 0 & 0 \\
   0 & 1 & a & 0 \\
   0 & 0 & 0 & b
  \end{array}
  \right) 
\end{align*}
where in cases $(i)$ and $(ii)$, $A_{\mg}^{-1}$ is represented in an orthonormal basis, while in cases $(iii)$ and $(iv)$ it is represented with respect to a pseudo-orthonormal basis. 

As we saw above, the eigenspace of any non-zero eigenvalue of $A_{\mg}^{-1}$ has dimension at least two. So, in case $(i)$ we can assume $a_1=a_2$ and $a_3 = a_4$. Now, one can use $\K(x,y) = \mg(x,A_{\mg}^{-1}y)$ to see that if $a_1 a_3 > 0$, then the index of $\K$ on $[h,\g{sl}_2(\C)]$ would be $(1,3)$ and if $a_1 a_3 < 0$, it would be $(3,1)$. This eliminates case $(i)$ as a possible matrix form of $A_{\mg}^{-1}$. Case $(ii)$ can be excluded similarly, by assuming $c=d$ and then calculating the index of $\K$.

In case $(iii)$, we have three eigenvalues and knowing that their eigenspaces can not be of dimension $1$, gives us that $a=c=d$. Let the pseudo-orthonormal basis be $\{\eta_1, \eta_2, e_1, e_2\}$ with $\mg(\eta_1,\eta_1) = \mg(\eta_2,\eta_2) = 0$ and $\{e_1, e_2\}$ orthonormal, then, the index of $\K$ on $\spann\{\eta_1, \eta_2\}$ is $(1,1)$ and on $\spann\{e_1, e_2\}$ is either $(0,2)$ or $(2,0)$. So, case $(iii)$ can be dropped, as well. A similar argument can dismiss case $(iv)$.

Therefore, the metric $\mg$ is positive definite on $[\xi , \g{sl}_2(\C)]$ and, considering the index of $\K$, $A_{\mg}^{-1}$ has the representation $\diag(a, a, b, b)$ with $ab < 0$, in an orthonormal basis $\{e_1, \ldots, e_4\}$ for $[\xi , \g{sl}_2(\C)]$. Therefore, for vectors $e_1 + e_3$ and $e_1 + e_4$ one gets
\[
[e_1 + e_3 , A_{\mg}^{-1}(e_1 + e_3)] = (b-a)[e_1 , e_3], \quad 
[e_1 + e_4 , A_{\mg}^{-1}(e_1 + e_4)] = (b-a)[e_1 , e_3].
\]

From\eqref{eq19} and knowing that for any $x, y \in [\xi , \g{sl}_2(\C)]$, $[x , y] \in c_{\g{g}}(\xi)$, we get $0 = \K(z , [e_1 , e_3]) = \K(z , [e_1 , e_4])$. This fact with non-degeneracy of $\K$ on $c_{\g{g}}(\xi)$, imply that $[e_1 , e_3]$ and $[e_1 , e_4]$ are linearly dependent. Thus, their Lie bracket is zero and it follows that $\mu e_1 \in \spann\{e_3 , e_4\}$ for some $\mu \in \C$. But $\{e_1, \ldots, e_4\}$ is orthonormal, so we must have $\i e_1 \in \spann\{e_3 , e_4\}$. Similarly, one gets $\i e_2 \in \spann\{e_3 , e_4\}$. So, we can use $\{e_1, e_2, \i e_2, \i e_2 \}$ as an orthonormal basis for $[\xi , \g{sl}_2(\C)]$. The operator $\ad_z$ is skew-symmetric with respect to $\mg$ and commutes with $A_{\mg}^{-1}$. So, in this basis $\ad_z$ and $A_{\mg}$ admit the representations in \eqref{eq18}.

We have $\ad_z(e_1 + \i e_2) = \i c(e_1 + \i e_2)$ and $\ad_z(e_2 + \i e_1) = \i c(e_2 + \i e_1)$ which shows that $z$ and $\i \xi$ are linearly dependent. Thus, without loss of generality, we can assume $z = \i \xi$. The vector $\i \xi$ is spacelike, so one can write 
\[
  A_{\mg}^{-1}\big|_{c_{\g{g}}(\xi)} = \begin{pmatrix} 
  d_1 & d_2 \\ -d_2 & d_3 \end{pmatrix}  .
\]
Since $z$ is spacelike, one can assume that $d_1 < 0$.
\end{proof}

From Lemma~\ref{new-first-integral}, we know that each solution of Euler equation lies in a level set of the function $\K(x,\i \xi)$. In the following lemma we consider solutions in the zero level set $\K(x,\i \xi) = 0$. Such a solution can be written as $u(t) = f(t)\xi + v(t)$ where $f:I\subset \R \to \R$ is a smooth function and $v:I \to [\xi , \g{sl}_2(\C)]$ is the projection of $u(t)$ on $[\xi , \g{sl}_2(\C)]$.

\begin{lemma}\label{lema2-spacelike=sl2c}
The Euler equation of $\mg$ on $\K(x,\i \xi) = 0$ can be written as 
\begin{equation}\label{eq23}
[u , A_{\mg}^{-1}u] = \varphi(v) \xi + f V(v) ,
\end{equation}
where $\varphi: [\xi , \g{sl}_2(\C)] \to \R$ and $V$ is a linear operator on $[\xi , \g{sl}_2(\C)]$. Furthermore, if the system $y'= V(y)$ has just bounded solutions, then solutions of the Euler equation on $\K(x,\i \xi) = 0$ are also bounded and, thus, are complete.
\end{lemma}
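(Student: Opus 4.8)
The plan is to plug the splitting $u(t)=f(t)\xi+v(t)$, which is available on the level set $\K(x,\i\xi)=0$ because the $\i\xi$-coordinate of $u$ is a multiple of $\K(u,\i\xi)$ (exactly the observation made just before the statement), into the Euler field $F_\mg(u)=[u,A_\mg^{-1}u]$, read off the promised form, and then combine a time reparametrization with a first integral to deduce completeness.

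First I would compute $F_\mg(u)$. By \eqref{eq21}, $A_\mg^{-1}\xi=d_1\xi-d_2\,\i\xi\in c_{\g{g}}(\xi)$, whereas $A_\mg^{-1}v\in[\xi,\g{sl}_2(\C)]$ by Lemma~\ref{lema1-spacelike=sl2c}(i). Expanding $[\,f\xi+v,\;fd_1\xi-fd_2\,\i\xi+A_\mg^{-1}v\,]$ and using that $c_{\g{g}}(\xi)=\spann_\R\{\xi,\i\xi\}$ is abelian, that $\ad_\xi$ (hence $\ad_{\i\xi}$) carries $[\xi,\g{sl}_2(\C)]$ into itself, and that $[[\xi,\g{sl}_2(\C)],[\xi,\g{sl}_2(\C)]]\subseteq c_{\g{g}}(\xi)$, every term carrying a factor of $f$ collects into $fV(v)$ with $V=\bigl(A_\mg^{-1}\circ\ad_\xi-d_1\ad_\xi+d_2\ad_{\i\xi}\bigr)\big|_{[\xi,\g{sl}_2(\C)]}$, a linear operator on $[\xi,\g{sl}_2(\C)]$, while the only $f$-free term is $[v,A_\mg^{-1}v]\in c_{\g{g}}(\xi)$. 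The single point that is not bookkeeping is that $[v,A_\mg^{-1}v]$ has zero $\i\xi$-component: pairing with $z=\i\xi$ and applying \eqref{eq19} (the first integral $\K(\cdot,z)$) gives $\K([v,A_\mg^{-1}v],\i\xi)=0$, and since $\K(\xi,\i\xi)=0$ while $\K(\i\xi,\i\xi)\neq0$ this leaves $[v,A_\mg^{-1}v]=\varphi(v)\xi$, with $\varphi$ the quadratic map returning the $\xi$-coordinate. This is \eqref{eq23}; matching it against $u'=f'\xi+v'$ across the direct sum $c_{\g{g}}(\xi)\oplus[\xi,\g{sl}_2(\C)]$ decouples the Euler equation into $f'=\varphi(v)$ and $v'=fV(v)$.

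Next assume $y'=V(y)$ has only bounded solutions and let $u$ be a maximal solution on $[0,b)$. Put $\tau(t)=\int_0^t f(s)\,ds$ and let $\tilde v$ solve $\tilde v'=V(\tilde v)$, $\tilde v(0)=v(0)$; linearity of $V$ makes $\tilde v$ global on $\R$, and uniqueness for the linear nonautonomous equation $w'=f(t)V(w)$ shows $v(t)=\tilde v(\tau(t))$ on $[0,b)$. Since $\tilde v$ is bounded on $\R$ by hypothesis, $v$ is bounded on $[0,b)$ regardless of the behaviour of $\tau$. It remains to bound $f$, which does not follow from boundedness of $v$ by itself; here I would invoke the first integral $\tr(\ad_x^2)=\K(x,x)$. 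Because $\xi\in c_{\g{g}}(\xi)$ and $v(t)\in[\xi,\g{sl}_2(\C)]$ are $\K$-orthogonal, $\K(u,u)=f^2\,\K(\xi,\xi)+\K(v,v)$; the left side is constant along $u$, $\K(v(t),v(t))$ is bounded, and $\K(\xi,\xi)=\tr(\ad_\xi^2)\neq0$, so $f$ is bounded on $[0,b)$. Hence $u=f\xi+v$ is bounded, so by Proposition~\ref{completeness-criterion} it extends past $b$ when $b<+\infty$, contradicting maximality; the same estimate runs for $t\le0$, so $u$ is complete.

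I expect the real obstacle to be this last point. After the splitting, the scalar equation $f'=\varphi(v)$ a priori permits $f$ to grow linearly in $t$ even when $v$ stays bounded, so the boundedness asserted in the statement is genuinely extra information: one has to spot that the Casimir $\tr(\ad_x^2)$, combined with the $\K$-orthogonality of $c_{\g{g}}(\xi)$ and $[\xi,\g{sl}_2(\C)]$, algebraically clamps $f^2$ to the bounded quantity $\K(v,v)$.
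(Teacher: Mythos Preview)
Your argument is correct and follows the paper's approach almost verbatim: expand $[u,A_\mg^{-1}u]$ along the splitting $u=f\xi+v$, use \eqref{eq19} to kill the $\i\xi$-component of $[v,A_\mg^{-1}v]$, and observe that $v$ is a time-reparametrization of a trajectory of the linear system $y'=V(y)$. Your treatment of the second assertion is actually more complete than the paper's: the paper simply asserts that boundedness of the linear system forces boundedness of $u$, without saying how $f$ is controlled, whereas your use of the Casimir $\K(u,u)=f^2\K(\xi,\xi)+\K(v,v)$ together with $\K(\xi,\xi)\neq 0$ closes that gap cleanly.

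One small slip: in your displayed formula for $V$ the first summand should be $\ad_\xi\circ A_\mg^{-1}$ rather than $A_\mg^{-1}\circ\ad_\xi$, since the relevant term is $f[\xi,A_\mg^{-1}v]$; these two operators do not commute on $[\xi,\g{sl}_2(\C)]$ when $a\neq b$ in \eqref{eq18}. This does not affect anything downstream, as only the linearity of $V$ is used.
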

\begin{proof}
Given a solution $u(t) = f(t)\xi + v(t)$ of the Euler equation, we then have
\begin{equation}\label{eq16}
[u , A_{\mg}^{-1}u] = f^2[\xi , A_{\mg}^{-1}\xi] + f[v , A_{\mg}^{-1} \xi] + f[\xi , A_{\mg}^{-1}v] + [v , A_{\mg}^{-1}v] .
\end{equation}
In the above equation one has $[\xi , A_{\mg}^{-1}\xi] = 0$, $[v , A_{\mg}^{-1}v] \in c_{\g{g}}(\xi)$ and $[v , A_{\mg}^{-1} \xi], [\xi , A_{\mg}^{-1}v] \in [\xi , \g{sl}_2(\C)]$. So, one gets $\K(\i \xi , [v , A_{\mg}^{-1}v]) = \K(\i \xi , [u , A_{\mg}^{-1}u]) = 0$, which yields $[v , A_{\mg}^{-1}v] = \varphi(v) \xi$ for some $\varphi: [\xi , \g{sl}_2(\C)] \to \R$. Thus, \eqref{eq16} becomes
\begin{equation}\label{eq17}
[u , A_{\mg}^{-1}u] = \varphi(v) \xi + f \left([v , A_{\mg}^{-1} \xi] + [\xi , A_{\mg}^{-1}v] \right) .
\end{equation}

The linear hyperspace of $\g{sl}_2(\C)$ defined by $0 = \K(x , \i \xi)$ is decomposed as $\R \xi \oplus [\xi , \g{sl}_2(\C)]$. Let $\pi : \R \xi \oplus [\xi , \g{sl}_2(\C)] \to [\xi , \g{sl}_2(\C)]$ be the projection map. We denote by $V$ the linear vector field on $[\xi , \g{sl}_2(\C)]$ obtained from the second term of \eqref{eq17}, that is, 
\[
V: [\xi , \g{sl}_2(\C)] \to [\xi , \g{sl}_2(\C)], \quad y\mapsto [\xi, (A_{\mg}^{-1} - d_1\id)y] + d_2[\i \xi , y] ,
\]
in which we used $A_{\mg}^{-1}\xi = d_1 \xi - d_2\i \xi$. In the above formula $\id : [\xi , \g{sl}_2(\C)] \to [\xi , \g{sl}_2(\C)]$ is the identity map. 

Let $\tilde{v}(t)$ be the solution of $\tilde{v}'(t) = V(\tilde{v}(t))$ with initial condition $\tilde{v}(0) = y_0$. Suppose that $u(t) = f(t)\xi + v(t)$ be the solution of the Euler equation with $u(0) = f(0)\xi + y_0$. Then, as long as $u(t)$ does not intersect the hyperspace $[\xi , \g{sl}_2(\C)]$, the curves $v(t) = \pi(u(t))$ and $\tilde{v}(t)$ have the same graph with, possibly, different parameterizations. In particular, if the solutions of $\tilde{v}'(t) = V(\tilde{v}(t))$ are bounded, then so is for the Euler equation.
\end{proof}

Let $c$ be the constant in \eqref{eq18} and $V$ the operator in \eqref{eq23}. To simplify calculations we consider $\frac{1}{c}V$ instead of $V$ in the following. The operator $\frac{1}{c}V$ has the representation 
\[
\left(
  \begin{array}{@{}cccc@{}}
   0 & -d_2 & 0 & -(b-d_1) \\
    d_2 & 0 & (b-d_1)  & 0  \\
    0 & (a-d_1) & 0 & -d_2 \\
    -(a-d_1)  & 0 & d_2 & 0
  \end{array}
\right).
\]
in the basis $\{e_1, e_2. \i e_1, \i e_2\}$, and its characteristic polynomial is given by 
\[
P_V(x) = (x^2 + d_2^2 - d)^2 + 4d_2^2d ,
\]
where $d=(d_1-b)(d_1-a)$. In the following we consider different possibilities for $d$.

\begin{lemma}\label{lema3-spacelike=sl2c}
If $d\leq 0$ then solution of the Euler equation are complete.
\end{lemma}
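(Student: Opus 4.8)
The aim is to prove that \emph{every} integral curve of the Euler field $F_{\mg}$ is bounded, whence $\mg$ is complete by Theorem~\ref{completeness-equivalences}. By Proposition~\ref{new-first-integral} each integral curve lies on a level set of $\K(\cdot,\i\xi)$, and by the discussion preceding Lemma~\ref{lema2-spacelike=sl2c} these level sets are the affine hyperplanes on which the $\i\xi$-component is a fixed constant. I would first dispose of the zero level set $\K(x,\i\xi)=0$: there Lemma~\ref{lema2-spacelike=sl2c} reduces the boundedness of the Euler curves to that of the solutions of the constant-coefficient linear system $y'=V(y)$ on $[\xi,\g{sl}_2(\C)]$; since passing from $V$ to $\tfrac1cV$ merely rescales time, it suffices to control $y'=\tfrac1cV(y)$, whose characteristic polynomial is $P_V(x)=(x^2+d_2^2-d)^2+4d_2^2d$.

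The key computation is to locate the spectrum of $\tfrac1cV$ under the hypothesis $d\le 0$. From $P_V(x)=0$ one gets $x^2+d_2^2-d=\pm 2|d_2|\sqrt{-d}$ (legitimate since $-4d_2^2d\ge 0$), hence $x^2=-\bigl(\sqrt{-d}\mp|d_2|\bigr)^{2}\le 0$, so every eigenvalue of $\tfrac1cV$ is purely imaginary. When $d<0$ one then gets semisimplicity, hence bounded solutions: if $d_2\neq 0$ the four numbers $\pm\i(\sqrt{-d}\pm|d_2|)$ are distinct, so $\tfrac1cV$ is diagonalizable over $\C$; if $d_2=0$, writing $\tfrac1cV=\bigl(\begin{smallmatrix}0&B\\ C&0\end{smallmatrix}\bigr)$ one computes $(\tfrac1cV)^2=\diag(BC,CB)=d\,\id$, so the minimal polynomial is $x^2-d$ and $\tfrac1cV$ is again semisimple. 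Thus for $d<0$ the Euler curves on $\K(x,\i\xi)=0$ are bounded.

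The boundary case $d=0$ is the real obstacle: then one checks (e.g.\ from $BC=\diag$-entry $-d_2^2$ together with a nonzero off-diagonal term, or $\tfrac1cV$ itself a nonzero nilpotent when $d_2=0$) that $\tfrac1cV$ \emph{does} have Jordan blocks, so the linearized system has polynomially growing solutions even though the conclusion of the lemma must still hold. For this case I would abandon the projection of Lemma~\ref{lema2-spacelike=sl2c} and work with the full Euler equation. The condition $d=0$ forces $d_1\in\{a,b\}$, which makes $A_{\mg}^{-1}$ block-diagonal in the basis $\{\xi,\i\xi,e_1,e_2,\i e_1,\i e_2\}$ — a $2\times2$ block on $c_{\g{g}}(\xi)$ and scalar blocks on $[\xi,\g{sl}_2(\C)]$ — so the system is very rigid, and I would show, using the remaining first integrals $\mg^*(x,x)$ and $\tr(\ad_x^m)$ together with the coupled pair $\dot f=\varphi(v)$, $\dot v=f\,V(v)$ (and the fact that the identification with the linear flow is valid only while $f\neq 0$), that the curves still cannot escape to infinity. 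I expect essentially all the difficulty of the lemma to be concentrated in this degenerate stratum; the strict inequality $d<0$ is immediate from the eigenvalue count.

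Finally, for a nonzero level set $\K(x,\i\xi)=k\neq 0$, homogeneity of $F_{\mg}$ — the rescaling $u(t)\mapsto\lambda u(\lambda t)$ sends solutions to solutions and multiplies $k$ by $\lambda$ — lets one fix a single value of $k$; writing $u=f\xi+g_0\i\xi+v$ with $g_0$ a constant, the projection $v$ satisfies an affine modification $\dot v=f\,V(v)+g_0\,W(v)$ of the previous system for a fixed linear $W$, which one analyzes by the same circle of ideas (the imaginary-spectrum/semisimplicity picture for the perturbed matrix, supplemented by the behaviour at $f=0$ exactly as in the $d=0$ case). Assembling the cases, every integral curve of $F_{\mg}$ is bounded, so $\mg$ is complete.
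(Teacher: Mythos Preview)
Your proposal has two genuine gaps and, for $d<0$, misses the paper's much shorter route.

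\textbf{The $d<0$ case.} The paper does \emph{not} analyze the spectrum of $V$ at all. It argues exactly as in Proposition~\ref{timelike-killing}: if some integral curve $u(t)$ is unbounded, normalize along a diverging sequence to get a unit limit $\theta$; then $\theta$ is nilpotent, $\mg^*(\theta,\theta)=0$ and $\K(\theta,\i\xi)=0$. In the coordinates $(f,x_1,x_2,y_1,y_2)$ on the hyperplane $\K(x,\i\xi)=0$ these constraints are the system~\eqref{eq20}, and subtracting the first two lines gives $(a-d_1)(x_1^2+x_2^2)+(d_1-b)(y_1^2+y_2^2)=0$. When $d=(d_1-a)(d_1-b)<0$ the two coefficients have the same sign, so $\theta=0$, a contradiction. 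This single argument covers \emph{every} level set of $\K(\cdot,\i\xi)$ at once, because the limit direction of any unbounded solution automatically lands on the zero level. Your spectral computation is correct on the zero level set, but your proposed extension to nonzero level sets (``affine modification $\dot v=fV(v)+g_0W(v)$ \ldots analyzed by the same circle of ideas'') is not a valid argument: $f$ is an unknown function of $t$, so this is not an autonomous linear system and the eigenvalue picture says nothing about it. The paper's limit-direction argument sidesteps this entirely.

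\textbf{The $d=0$ case.} You correctly identify that the linearized system has Jordan blocks and that one must go back to the full Euler equation, but you give no argument. The paper's key observations are concrete: taking $d_1=b$ (so $a>0>b=d_1$), one writes out the six scalar Euler equations and reads off directly that $x_1 x_1'+x_2 x_2'=0$, so $x_1^2+x_2^2=r^2$ is a \emph{new} conserved quantity. Differentiating $f'=(a-b)(x_1y_2-y_1x_2)$ once more then collapses to $f''=(a-b)^2 r^2 f$, a constant-coefficient linear ODE; hence $f$ exists for all time, and since $\xi$ is the timelike direction for $\mg^*$ the remaining components are controlled by $f$ via the first integral $\mg^*(u,u)$. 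Without discovering the extra invariant $x_1^2+x_2^2$ you cannot close the argument; this is the missing idea in your sketch.
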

\begin{proof}
Suppose that $d < 0$ then $V$ only has pure imaginary eigenvalues. Let $(x_1, x_2, y_1, y_2)$ denote the coordinates in the basis $\{e_1, e_2, \i e_1, \i e_2\}$. Then, having $\K(\i \xi, x)$, $\mg^*(x , x)$ and $\tr(\ad_x^m)$ as first integrals and using $\mg^*(x,y) = \K(x , A_{\mg}^{-1}y)$ one obtains the following system of equations:
\begin{align} \label{eq20}
    x_1^2 + x_2^2 -y_1^2 - y_2^2 + f^2 = 0 \nonumber \\
    a x_1^2 + a x_2^2 -b y_1^2 - b y_2^2 + d_1 f^2  = 0 \\
    x_1 y_1 + x_2 y_2 = 0 \nonumber
\end{align}
From that, one gets $(a-d_1)(x_1^2 + x_2^2) + (d_1-b)(y_1^2 + y_1^2) = 0$, which only has zero as a solution. On the other hand, we know that if the Euler equation has an unbounded solution then the zero level set of the above three first integrals have at least one $\theta \neq 0$ in common. Hence, when $d < 0$, solutions of the Euler equation are bounded.

Now suppose that $d = 0$. The constants $a$ and $b$ in \eqref{eq18} have different signs. There is no loss of generality in assuming $a>0$ and $b<0$. One then obtains $d_1 = b$. If we write $x$ in the hyperspace $\K(x , \i \xi) = 0$ as $x = f\xi + y$ with $y = (x_1, x_2, y_1, y_2)$ representing an element of $[\i \xi , \g{sl}_2(\C)]$ in the basis $\{e_1, e_2, \i e_1, \i e_2 \}$, then one gets
\[
[x , A_{\mg}^{-1}] = f \left([y , [\xi , A_{\mg}^{-1}y] + A_{\mg}^{-1} \xi] \right) + [y , A_{\mg}^{-1}y],
\]
where $[y , A_{\mg}^{-1}y] = (a-b)(x_2y_1 - x_1y_2)$ and 
\begin{align*}
[\xi , A_{\mg}^{-1}y] + [y , A_{\mg}^{-1} \xi] = (-d_2x_2)e_1 + (d_2x_1)e_2 + ((a-b)x_2 - d_2y_2) \i e_1 \\ - ((a-b)x_1 - d_2y_1) \i e_2 .
\end{align*}

So, the Euler equation is given by
\begin{align*}
    f' & = (a-b)(x_1y_2 - y_1x_2) , & &\\
    x'_1 & = -d_2 f x_2 , & y'_1 & = (a-b) f x_2 - d_2 f y_2 , \\
    x'_2 & = d_2 f x_1 , &  y'_2 & = -(a-b) f x_1 + d_2 f y_1 . 
\end{align*}

From the above equations one gets $x'_1 = -d_2 f x_2$ and $x'_2 = d_2 f x_1$, which yields $x'_1 x_1 + x'_2 x_2 = 0$. Thus, $x_1^2 + x_2^2 = r^2$, for some constant $r$. Then from the above system we have 
\begin{align*}
\frac{1}{(a-b)}f'' & = x'_2 y_1 - y'_2 x_1 + x_2 y'_1 - y_2 x'_1 \\
& = (a-b)f(x_1^2 + x_2^2) \\
& = (a-b)r^2f ,
\end{align*}
So, for any solution $u(t) = f(t)\i \xi + v(t)$ of the Euler equation, $f(t)$ is complete and since $f$ is the coefficient in the timelike direction, the curve $v(t) = (x_1(t), x_2(t), y_1(t), y_2(t))$ also turns out to be complete.
\end{proof}

Next, we consider the case $d>0$.

\begin{lemma}\label{lema4-spacelike=sl2c}
If $d > 0$ then $\mg$ is complete if and only if $F_{\mg}$ has no GCS.
\end{lemma}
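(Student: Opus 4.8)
The biconditional splits into an easy and a hard implication. If $F_{\mg}$ possesses a GCS $u$ with $u(s)=ru(0)$ and $r\neq 1$, then $u$ is incomplete by part~(iii) of the Proposition on GCS curves above, so by Theorem~\ref{completeness-equivalences} the metric $\mg$ is incomplete; this uses nothing about $d$. So the content of the lemma is that if $\mg$ is incomplete then $F_{\mg}$ has a GCS, and this is where $d>0$ enters. The first reduction is to the hyperplane $\K(x,\i\xi)=0$: since $\K(\cdot,\i\xi)$ is a first integral (Proposition~\ref{new-first-integral}), any GCS $u$ with $r\neq 1$ satisfies $(r-1)\K(u(0),\i\xi)=0$ and hence lies there; and, by this first integral together with $\mg^*(x,x)$ and $\tr(\ad^m_x)$ exactly as in the proof of Proposition~\ref{timelike-killing}, the asymptotic direction of any finite-time blow-up solution also lies there. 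On this hyperplane write $u=f\xi+v$ as in Lemma~\ref{lema2-spacelike=sl2c}, i.e.\ $f'=\varphi(v)$ and $v'=fV(v)$, with the normal forms of Lemma~\ref{lema1-spacelike=sl2c}. The hypothesis $d>0$ makes $V$ hyperbolic: the roots of $P_V$ are $\pm(\sqrt d+\i d_2),\ \pm(\sqrt d-\i d_2)$, so the eigenvalues of $V$ have real parts $\pm c\sqrt d\neq 0$ and $[\xi,\g{sl}_2(\C)]=S^+\oplus S^-$ splits into a $2$-dimensional unstable and a $2$-dimensional stable subspace, on each of which $e^{sV}$ acts as the dilation $e^{\pm sc\sqrt d}$ composed with a rotation through angle $\pm scd_2$ (a pure dilation when $d_2=0$).

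Next I would locate where a GCS can sit. From $v'=fV(v)$ (linear in $v$) one gets $v(t)=e^{s(t)V}v(0)$ with $s(t)=\int_0^t f$, so if $u(s_0)=ru(0)$ then $v(0)$ is an eigenvector of $e^{\sigma_0 V}$, $\sigma_0=s(s_0)$, with eigenvalue $r>0$. The only positive real eigenvalues of $e^{\sigma_0 V}$ are $e^{\pm\sigma_0 c\sqrt d}$, with eigenspaces $S^+$, $S^-$, and they coincide only for $r=1$; hence the $v$-component of any GCS of ratio $r\neq 1$ lies entirely in $S^+$ or entirely in $S^-$. Conversely, on the cone over $S^+$ (symmetrically over $S^-$) the equation $v'=fV(v)$ keeps $v\in S^+$, and after the time change $ds=f\,dt$ the scalar equation becomes $\tfrac{d}{ds}f^2=2\varphi(v(s))=2e^{2s\alpha}\psi(s)$, $\alpha=c\sqrt d$, with $\psi(s)=\varphi(\rho_s v(0))$ a $T$-periodic function, $T=2\pi/(cd_2)$ (constant when $d_2=0$). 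Integrating via the Fourier series of $\psi$ gives $f^2(s)=\big(f^2(0)-2h(0)\big)+2e^{2s\alpha}h(s)$ with $h$ a $T$-periodic function built from $\varphi|_{S^+}$; the initial condition $f^2(0)=2h(0)$, which requires $h(0)=Q^+(v(0))/(e^{2T\alpha}-1)\ge 0$ for the quadratic form $Q^+:=\int_0^T e^{2\tau\alpha}\psi\,d\tau$ on $S^+$, yields $f^2(s)=2e^{2s\alpha}h(s)$, and this is an admissible orbit — from which one reads off a GCS with $r=e^{\pm kT\alpha}$ — precisely when in addition $h(s)\ge 0$ for all $s$. The analogous computation on $S^-$ produces $Q^-$ and a periodic $h^-$ against the weight $e^{-2s\alpha}$.

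It remains to match this with completeness. Given a finite-time blow-up solution on the hyperplane, its projection $v(t)=e^{s(t)V}v(0)$ cannot stay bounded (else, by the first integrals, $f$ is bounded and the solution extends, against Proposition~\ref{completeness-criterion}), so $s(t)\to\infty$ and $v(0)$ has a non-zero $S^+$- or $S^-$-component; feeding the asymptotically dominant part (say, $S^+$) into $\tfrac{d}{ds}f^2=2\varphi(v)$ gives a leading term $2e^{2s\alpha}h(s)$ with $h$ periodic, and the constraint $f^2\ge 0$ forces $h\ge 0$ throughout its period — exactly the condition under which the construction of the previous paragraph produces a GCS on the cone over $S^+$. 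Conversely, if $F_{\mg}$ has no GCS, then for every $v(0)$ in $S^+$ (resp.\ $S^-$) the function $h$ is identically zero or somewhere negative, and in either case an elementary estimate on $t=\int ds/\sqrt{f^2(s)}$ forces every solution to be defined on all of $\R$, so $\mg$ is complete. The main obstacle is this last step: incompleteness hands us only an escaping solution, not an honest self-similar orbit, so one must (i) show its asymptotic profile is governed by the dominant eigendirection of $V$ alone — which means bounding the cross-terms between $S^+$ and $S^-$ and the $S^-$-part, all of which grow at most linearly in $s$ against $e^{2s\alpha}$ — and (ii) solve the coupled nonlinear scalar equation for $f$ to pin down the exact initial value $f^2(0)=2h(0)$ that closes the orbit into a GCS. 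Separating the sub-cases $d_2\neq 0$ (a genuine conical spiral, $r=e^{\pm 2\pi\sqrt d/d_2}$) and $d_2=0$ (the $v$-orbit is a half-line and the GCS degenerates to the idempotent ray of \cite{BroMed08}) is the remaining bookkeeping.
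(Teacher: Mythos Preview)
Your route is genuinely different from the paper's and much heavier. You try to prove the hard implication ``incomplete $\Rightarrow$ GCS'' by taking a generic blow-up solution, projecting to $[\xi,\g{sl}_2(\C)]$, splitting $V$ spectrally as $S^+\oplus S^-$, and then extracting a self-similar orbit by an asymptotic/Fourier argument on the scalar equation for $f$. The paper avoids all of this: it shows directly that when $d>0$ a GCS \emph{always exists}, so both sides of the biconditional are false and the equivalence is vacuous. The key observation you are missing is that the three first integrals $\mg^*(x,x)$, $\tr(\ad_x^2)$ and $\K(x,\i\xi)$ (system~\eqref{eq20} with zero right-hand side) carve out, inside the hyperplane $\{\K(x,\i\xi)=0\}$, a \emph{two}-dimensional invariant cone, two copies of $S^1\times\R^+$, which is nonempty precisely when $d>0$. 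Any Euler trajectory starting on this cone stays there; its projection is governed (after the time change $ds=f\,dt$) by the linear flow of $V$, which has an eigenvalue of positive real part, so by Proposition~\ref{omega-limit-set} the trajectory is unbounded. An unbounded trajectory on $S^1\times\R^+$ either wraps around the $S^1$ factor, hence returns to a positive multiple of its starting point (a GCS with $r\neq 1$), or the angular component stalls on an invariant ray; since $[x,A_{\mg}^{-1}x]\neq 0$ there, that ray is generated by an idempotent. Either way, a GCS.

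Your approach has real gaps that this shortcut bypasses. The ``converse'' you sketch (no GCS $\Rightarrow$ complete) would have to control solutions whose $v$-component has \emph{mixed} $S^+$/$S^-$ parts, not just those lying in one eigenspace, and also solutions on nonzero level sets $\K(x,\i\xi)=c\neq 0$; your periodic-function criterion $h\ge 0$ only speaks to initial data with $v(0)\in S^\pm$. The forward step (blow-up $\Rightarrow$ GCS) via ``asymptotic profile'' is heuristic as written: knowing that the normalized limit $\theta$ lies on the cone does not by itself produce an exact self-similar orbit, and you yourself flag the two obstacles. None of this is needed once you work on the invariant $2$-cone rather than the full $5$-dimensional hyperplane.
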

\begin{proof}
In this case the eigenvalues of $V$ are $\pm(\sqrt{d}+\i d_2)$ and $\pm(\sqrt{d}-\i d_2)$. As in the case of $d<0$, one gets the system of equations \eqref{eq20} on the hyperspace $\K(x , \i \xi) = 0$, which determines two copies of the cone $S^1 \times \R^+$ where the factor $\R$ is associated with coefficient $f$ of the timelike vector $\xi$. One can also check that the points on these cones satisfy $0 \neq [x , A_{\mg}^{-1}x]$, namely, they are in the intersection $\N \cap \Lambda^*_{\mg}$. So, if $u(t)$ is a solution of the Euler equation $u'(t)=[u(t) , A_{\mg}^{-1}u(t)]$ with $u(0) \in (\N \cap \Lambda^*_{\mg}) \cap \{ \K(x , \i \xi) = 0 \}$, then, it lies in $S^1 \times \R^+$. By Proposition~\ref{omega-limit-set}, since $V$ has an eigenvalue with positive real part, the $\omega$-limit set of the curve $\pi(u(t))$ in $[\xi , \g{sl}_2(\C)]$ is empty and $\pi(u(t))$ is distancing from the origin, in the direction of $\xi$. This means that the curve $u(t)$ in $S^1 \times \R^+$ is unbounded. So, either the projection of $u(t)$ on $S^1$ covers the whole $S^1$, or we have an idempotent. In either case $u(t)$ is an incomplete GCS. 
\end{proof}

 Lemmas~\ref{lema3-spacelike=sl2c} and \ref{lema4-spacelike=sl2c} in the light of Theorem~\ref{completeness-equivalences} complete the proof of Proposition~\ref{spacelike-killing-sl2}.

As we saw in the proof of Proposition~\ref{spacelike-killing-sl2}, the Euler field can have incomplete GCS just in the situation of Lemma~\ref{lema4-spacelike=sl2c} and such solutions must be included in $\N \cap \Lambda^*_{\mg}$. So, the corresponding geodesics in $SL_2(\C)$ are lightlike. Therefore, we have
\begin{corollary}\label{complete-lightlikecomplete}
Let $\mg$ be a left-invariant Lorentzian metric on $SL_2(\C)$. Suppose that there exists a left-invariant Killing vector field on $SL_2(\C)$. Then, $\mg$ is complete if and only if it is lightlike complete.
\end{corollary}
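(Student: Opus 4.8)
The forward implication is immediate, since a complete metric is a fortiori lightlike complete, so the whole content lies in the converse; I would establish it in contrapositive form. Thus assume $\mg$ is incomplete, and let $Z\in\X_L(SL_2(\C))$ be the given Killing vector field. By Proposition~\ref{causal-killing-sl2}, $Z$ cannot be causal, so it is spacelike, and I may put myself in the normal form of Lemma~\ref{lema1-spacelike=sl2c}: $z=\i\xi$, with $A_{\mg}^{-1}$ and $\ad_z$ as in~\eqref{eq18}--\eqref{eq21}. By Lemma~\ref{lema3-spacelike=sl2c}, if $d=(d_1-b)(d_1-a)\le 0$ then all solutions of the Euler equation are complete; since $\mg$ is incomplete we must have $d>0$, and then Lemma~\ref{lema4-spacelike=sl2c}, in contrapositive form, provides a GCS of $F_{\mg}$. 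Inspecting the proof of that lemma, such a GCS $u(t)$ is supported on one of the two cones $S^1\times\R^+$ defined by the system~\eqref{eq20}, whose points lie in $\N\cap\Lambda^*_{\mg}$, and it has ratio $r\ne 1$, hence is incomplete. In particular $\mg^*(u(t),u(t))=0$ for all $t$, since $u(t)\in\Lambda^*_{\mg}$.

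Next I would translate this back into a statement about $SL_2(\C)$. Let $\gamma$ be the reflection geodesic of $u$, so that $\gamma'(t)=dL_{\gamma(t)}\bigl(A_{\mg}^{-1}u(t)\bigr)$. Using left-invariance of $\mg$ and the defining relation $\mg(x,y)=\K(x,A_{\mg}y)$ one gets
\[
\mg\bigl(\gamma'(t),\gamma'(t)\bigr)=\mg\bigl(A_{\mg}^{-1}u(t),A_{\mg}^{-1}u(t)\bigr)=\K\bigl(A_{\mg}^{-1}u(t),u(t)\bigr)=\mg^*\bigl(u(t),u(t)\bigr)=0,
\]
so $\gamma$ is a lightlike geodesic. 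Moreover $u$ is a maximal solution of the Euler equation with finite maximal domain, so by the correspondence between a geodesic and its reflection (Theorem~\ref{completeness-equivalences} and the discussion preceding it) the geodesic $\gamma$ is inextendible, hence incomplete. Therefore $\mg$ admits an incomplete lightlike geodesic, i.e. it is not lightlike complete; this is precisely the contrapositive of the nontrivial implication, and the proof is done.

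I do not expect a serious obstacle here: the substantive work — isolating the normal forms for $A_{\mg}^{-1}$, reducing the Euler equation on the level set $\K(\cdot,\i\xi)=0$ to the linear system governed by $V$, and locating the GCS in $\N\cap\Lambda^*_{\mg}$ — has already been carried out in Lemmas~\ref{lema1-spacelike=sl2c}--\ref{lema4-spacelike=sl2c}. The only points that need attention are that the list of possible forms of $A_{\mg}^{-1}$ in Lemma~\ref{lema1-spacelike=sl2c} is exhaustive, so that no GCS can escape the dichotomy $d\le 0$ versus $d>0$, and the elementary identity $\mg(\gamma',\gamma')=\mg^*(u,u)$ that turns a null Euler curve into a null geodesic; both are already available in this section.
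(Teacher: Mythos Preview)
Your proposal is correct and follows essentially the same route as the paper: the paper's argument for the corollary is the single sentence preceding it, observing that the only incomplete solutions produced by the analysis of Lemmas~\ref{lema1-spacelike=sl2c}--\ref{lema4-spacelike=sl2c} lie in $\N\cap\Lambda^*_{\mg}$ and hence correspond to lightlike geodesics. You simply unpack this more carefully, in particular making explicit the identity $\mg(\gamma',\gamma')=\mg^*(u,u)$ that the paper leaves implicit.
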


\begin{remark}\label{remark2}
In the proof of Lemma~\ref{lema4-spacelike=sl2c}, when $d_2 \neq 0$, one obtains an incomplete solution of the Euler equation which is not generated by an idempotent. So, unlike $SL_2(\R)$, there are incomplete left-invariant Lorentzian metric on $SL_2(\C)$ with no idempotent.   
\end{remark}

\begin{remark}\label{remark3}
In Lemma~\ref{lema1-spacelike=sl2c}, the assumption that $Z$ is left-invariant and Killing is sufficient to prove the lemma. Supposing that $Z$ is spacelike just allows us to assume $d_1 < 0$. Indeed, one can check that in \eqref{eq21}, if $d_1 > 0$ and $d_2 = 0$, then $A_{\mg}$ defines a left-invariant Lorentzian metric on $SL_2(\C)$ for which $Z = \i\xi$ is a left-invariant Killing timelike vector field. Similarly, when $d_1 = d_3 = 0$ and $d_2 \neq 0$, then the left-invariant Killing vector field $Z = \i\xi$ is lightlike. These provide explicit examples for Theorems~\ref{MainTheorem1} (i),~\ref{MainTheorem2}.
\end{remark}

In \cite{Tho14} a stronger version of Proposition~\ref{disjoint-cones-complete} was proved for $SL_2(\R)$, which states that a left-invariant Lorentzian metric on $SL_2(\R)$ is complete if $\Lambda^*_{\mg}$ and $\N$ are disjoint or tangent at some point, and it is incomplete if $\Lambda^*_{\mg}$ and $\N$ are transversal. It is then claimed that this is true for any Lorentzian semisimple Lie group. Here is a counterexample giving a left-invariant Lorentzian metric on $SL_2(\C)$ which is complete, while $\Lambda^*_{\mg}$ and $\N$ are transversal. 

\vspace{0.5cm}
\textbf{Example:}\label{example1-sl2c} Take the following complex basis for $\g{sl}_2(\C)$:
\[
e_1 = \frac{1}{\sqrt{2}} \begin{pmatrix} 1 & 0 \\ 0 & -1 \end{pmatrix}, \quad
e_2 = \frac{1}{\sqrt{2}} \begin{pmatrix} 0 & 1 \\ 1 & 0 \end{pmatrix}, \quad
e_3 = \frac{1}{\sqrt{2}} \begin{pmatrix} 0 & 1 \\ -1 & 0 \end{pmatrix}
\]

Considering $\g{sl}_2(\C)$ as the complexification of $\g{sl}_2(\R)$ by $\g{sl}_2(\C) = \g{sl}_2(\R) \oplus \i \g{sl}_2(\R)$, $\{ e_1, e_2, e_3 \}$ is a basis for $\g{sl}_2(\R)$. Let $\mg$ be the left-invariant Lorentzian metric on $SL_2(\C)$ with associated isomorphism $A_{\mg}$ whose inverse is given by the matrix $A_{\mg}^{-1} = \diag\{ 1, 1, 2, -3, -3, 1\}$ in the real basis $\{e_1, e_2, e_3, \i e_1, \i e_2, \i e_3\}$ for $\g{sl}_2(\C)$. 

Let $x = (x_1, x_2, x_3, y_1, y_2, y_3)$ denote the coordinate of $x\in \g{sl}_2(\C)$ in the above real basis. Then the null cone $\Lambda^*_{\mg}$ defined by $\mg^*(x,x) = \K(x , A^{-1}x) = 0$ is determined by the following equation
\begin{equation} \label{eq10}
x_1^2 + x_2^2 -2x_3^2 + 3y_1^2 + 3y_2^2 + y_3^2 = 0 .
\end{equation}
On the other hand, $x$ belongs to $\N$ if it satisfies $\tr_\C x^2 =0$ or, equivalently, if
\begin{align}
x_1^2 + x_2^2 -x_3^2 - y_1^2 - y_2^2 + y_3^2 = 0 , \label{eq11}\\
x_1 y_1 + x_2 y_2 - x_3 y_3 = 0 . \label{eq12}
\end{align}

For any $x \in \Lambda^*_{\mg} \cap \N$, the one-dimensional subspace $\R x \subset \g{sl}_2(\C)$ is also included in the intersection. So, to determine $\Lambda^*_{\mg} \cap \N$, we first equip $\g{sl}_2(\C)$ with an arbitrary norm, for simplicity, the square norm $\|x\| = \sum_i x_i^2 + \sum_j y_j^2$, and then determine unit vectors in $\g{sl}_2(\C)$ satisfying \eqref{eq10}-\eqref{eq12}. If $x$ is a unit vector, then \eqref{eq11} yields
\begin{equation}\label{eq13}
x_1^2 + x_2^2 + y_3^2 = \frac{1}{2} , \quad
x_3^2 + y_1^2 + y_2^2 = \frac{1}{2} .
\end{equation}
From the above equations and \eqref{eq10} one gets $x_3 = \pm \sqrt{\frac{2}{5}}$. Then, using \eqref{eq11} and \eqref{eq13} one can see that the set of unit points in $\Lambda^*_{\mg} \cap \N$ is $S^1\times S^1$. Hence, $\Lambda^*_{\mg} \cap \N = (S^1 \times S^1) \times \R$, showing that $\Lambda^*_{\mg}$ and $\N$ are transversal.

Now, let us write the Euler equation of $\mg$. If $u(t) = \sum_{i=1}^3 \alpha_i(t) e_i + \sum_{j=1}^3 \beta_j(t) \i e_j$ is a solution of $u'(t) = [u(t) , A_{\mg}^{-1} u(t)]$, with $u(0)$ an idempotent, then
\begin{align*}
\alpha_1' & = -\sqrt{2} \alpha_2 \alpha_3 + 4\sqrt{2} \beta_2 \beta_3 , & \beta_1' & = 4\sqrt{2} \alpha_3 \beta_2 , \\
\alpha_2' & = \sqrt{2} \alpha_1 \alpha_3 - 4\sqrt{2} \beta_1 \beta_3 , & \beta_2' & = 5\sqrt{2} \alpha_3 \beta_1 , \\
\alpha_3' & = 0 , & \beta_3' & = -4\sqrt{2} \alpha_1 \beta_2 + 4\sqrt{2} \alpha_2 \beta_1 . 
\end{align*}
So, $\alpha_3$ is constant and since it is the coefficient of the timelike vector $e_3$, the rest of the coefficients have to be bounded. Hence, any solution of the Euler equation is complete and, thus, the metric $\mg$ is complete.

Note also that the above equations easily imply that $u'(0) = [u(0) , A_{\mg}^{-1} u(0)]$ only if $u(0) = 0$. So, the Euler field has no idempotent. 

Theorem~\ref{MainTheorem3} shows that the above example is reflecting a general feature for left-invariant Lorentzian metrics on $SL_2(\C)$ stated in Theorem~\ref{MainTheorem3}. 

Here is the proof of Theorem~\ref{MainTheorem3}.

\begin{proof}[\textbf{Proof of Theorem~\ref{MainTheorem3}}]
Suppose that the cones $\Lambda^*_{\mg}$ and $\N$ are transversal at some point $z \in \g{sl}_2(\C)$. Since $\mg$ is Lorentzian, one can see that the induced bilinear form $\mg^*$ is also Lorentzian. The null cone $\Lambda^*_{\mg}$ of the Lorentzian vector space $\g{sl}_2(\C)$ is known to be diffeomorphic to $S^4 \times \R$, see for example \cite{ONe83}. On the other hand, according to \cite{ColMcG93}, $\N-\{0\}$, which is the orbit of  the adjoint action of $G$ containing $z$, is a four-dimensional submanifold of $\g{sl}_2(\C)$ with fundamental group $\mathbb{Z}_2$. Let us fix an arbitrary norm on $\g{sl}_2(\C)$ and denote by $S^5$ the unit sphere in $\g{sl}_2(\C)$ determined by this norm. Then, knowing that $\N$ contains the one-dimensional subspaces generated by each of its elements, it follows that $\N$ and $S^5$ are transversal at all points of their intersection. So, $\N \cap S^5$ is a three-dimensional compact manifold and $\N$ is diffeomorphic to $(\N \cap S^5) \times \R$. In particular, $\N \cap S^5$ has the same fundamental group as $\N$ itself, and thus it is diffeomorphic to the three-dimensional real projective space $\R P^3$. In both products $\Lambda^*_{\mg} = S^4 \times \R$ and $\N = \R P^3 \times \R$ the factors $\R$ represents the one-dimensional linear subspaces generated by elements in $\Lambda^*_{\mg} \cap \N$. So, it follows that $S^4$ and $\R P^3$ are also transversal and, thus, $\Lambda^*_{\mg} \cap \N$ is diffeomorphic to $M \times \R$ for some two-dimensional compact orientable manifold $M \subset S^4 \cap \R P^3$. Indeed, $M$ is the transversal intersection $S^4 \cap \R P^3$. The Euler field $F_{\mg}(x)=[x,A_{\mg}^{-1}x]$ is tangent to both $\Lambda^*_{\mg}$ and $\N$ and, consequently, tangent to their intersection $M \times \R$. The tangent component of $F$ on $M$ defines a vector field on $M$ which we denote by $F_M$. Clearly, if $F_M$ vanishes at some point $x\in M$, then $F_{\mg}(x)$ is radial, that is, $F_{\mg}(x) = ax$ with $0 \neq a \in \R$, which yields $F_{\mg}(\frac{1}{a}x) = [\frac{1}{a}x,A_{\mg}^{-1}(\frac{1}{a}x)] = \frac{1}{a}x$. Hence, $\frac{1}{a}x$ is an idempotent of $\g{sl}_2(\C)$. However, we know from Poincare-Hopf theorem that the $2$-torus $T^2$ is the only two-dimensional compact orientable manifold which admits a non-vanishing vector filed. So if there is no idempotent, then $M$ must be a two-dimensional torus.
\end{proof}

A point worth mentioning here is that on $SL_2(\R)$ completeness of a left-invariant metric is fully revealed just by knowing whether $\N$ and $\Lambda_{\mg}^*$ are transversal, tangent or disjoint. However, generally speaking, even completely determining $\N \cap \Lambda_{\mg}^*$ might not be enough to draw conclusion about completeness of the metric. For instance, the example on page~\pageref{example1-sl2c} and the type of examples mentioned in  Remark~\ref{remark2} both satisfy Theorem~\ref{MainTheorem3}, but the first one is complete, while the second is incomplete.

\subsection{A remark on limit curves}\label{limit-curve}
Our aim in this subsection is to construct a counterexample for a claim in \cite{Yur92}. In \cite{Yur92} the statement below was assumed to be true:

\begin{itemize}
\item[(*)] A limit curve of any sequence of incomplete geodesics in a compact Lorentzian manifold is closed.
\end{itemize}

We show that this is not true in general.
More precisely, we give an example of a compact Lorentzian manifold of the form $SL_2(\C)/\Gamma$ and a sequence of incomplete geodesics in $SL_2(\C)/\Gamma$, so that their limit geodesic is not closed.

Suppose that $\Gamma$ is a finitely generated discrete subgroup of $SL_2(\C)$ acting properly discontinuous on $SL_2(\C)$ such that the quotient space $SL_2(\C)/\Gamma$ is a compact manifold. Existence of such discrete subgroups and their actions are studied thoroughly in a general framework in \cite{Ghy95, GueKas17, Tho18}. Let $\mg$ be a left-invariant Lorentzian metric on $SL_2(\C)$ and as in Lemma~\ref{AdG-family}, for each $g\in SL_2(\C)$ let $\mg_g$ be the metric defined by $\mg_g(x,y):=\K(x,\Ad_g \circ A_{\mg} \circ \Ad_{g^{-1}} (y))$. Any left-invariant Lorentzian metric on $SL_2(\C)$ induces a Lorentzian metric on $SL_2(\C)/\Gamma$. With each of the metrics $\mg$ or $\mg_g$ on $SL_2(\C)$ and the induced metric on $SL_2(\C)/\Gamma$, the quotient map $\pi: G \to SL_2(\C)/\Gamma$ is a semi-Riemannian covering. So, a geodesic in $SL_2(\C)$ is complete if and only if its projection in $SL_2(\C)/\Gamma$ is complete.  

Let us use the same notations and basis introduced in Lemma~\ref{lema3-spacelike=sl2c} for \eqref{eq18} and \eqref{eq20}. In that setting, let $\mg$ be the left-invariant Lorentzian metric on $SL_2(\C)$ whose associated isomorphism on $\mg$ is $A_{\mg}^{-1} = \diag(d_1, d_3, a, a, b, b)$ so that $d=(d_1-a)(d_1-b)>0$. Then, as we saw $\mg$ is incomplete. Indeed, one can pick a nilpotent $\theta = \xi + y$ in the hyperspace $\K(x , \i \xi) = 0$ with $y\in [\i\xi , \g{sl}_2(\C)]$ so that $[\theta , A_{\mg}^{-1}\theta] = \theta$. As it was mentioned in Section~\ref{section2}, the integral curve $u(t)$ of the Euler field $F_{\mg}$ with initial condition $u(0) = \theta$ is $u(t) = \alpha(t)\theta$, where $\alpha(t) = 1/(1-t)$. The corresponding geodesic of $u(t)$ in $SL_2(\C)$ is $\gamma_{\theta}(t) = \exp(\beta(t)A_{\mg}^{-1}\theta)$ where $\beta$ satisfies $\beta'(t) = \alpha(t)$ and $\beta(0) = 1$. Clearly, $\gamma_{\theta}(t)$ is incomplete since $u(t)$ is incomplete. Moreover, we have $A_{\mg}^{-1}\theta = \frac{1}{2}\xi' + r \theta$, for some $r\in \R$, where $\xi'$ is the semisimple element of the $\g{sl}_2$-triple containing $\theta$. Using this expression for $A_{\mg}^{-1}\theta $, one can check that $\gamma_{\theta}$ is not closed.

For each positive integer $m$, let $u_m(t)$ be the curve in $\g{sl}_2(\C)$ defined by
\[
u_m(t) = g_m(t)\xi + h_m(t)y ,
\]
where with $a_m = \sqrt{\frac{1}{m}}$ and $b_m = (1-a_m)/(1+a_m)$, the functions $g_m$ and $h_m$ are given by 
\[
g_m(t) = \frac{1+b_m e^{(2a_m)t}}{1-b_m e^{2a_m t}} a_m ,  \quad h_m(t)  = \sqrt{1-\frac{1}{d_1 m}}e^{\int_0^t g_m(s)ds} .
\]

One can check that each $u_m(t)$ is an incomplete solution of the Euler equation of $\mg$. Furthermore, $u_m(0) \to \theta$, that it, $\gamma_{\theta}(t)$ is a limit geodesic of the sequence $\{\gamma_m(t)\}$, where, for each $m$, $\gamma_m(t)$ denotes the reflected geodesic of $u_m(t)$ in $SL_2(\C)$.

Now, we consider the projection of the geodesics $\gamma_{\theta}(t)$ and $\gamma_m(t)$ on the quotient space $SL_2(\C)/\Gamma$. Since the projection map $\pi$ is a semi-Riemannian covering map, one can see that the geodesic $\pi(\gamma_{\theta}(t))$ is a limit geodesic of the sequence of incomplete geodesics $\{\pi(\gamma_m(t))\}$ in $SL_2(\C)/\Gamma$. If the geodesic $\pi(\gamma_{\theta}(t))$ is not closed, then we have the example we were looking for. So, suppose that $\pi(\gamma_{\theta}(t))$ is closed. This means that $\gamma_{\theta}(t) \cap \Gamma$ must include an element $q_e$ other than the identity.

For each $g\in G$, we can repeat the above procedure for the metric $\mg_g$ and the idempotent $\theta_g = \Ad_g\theta$ instead of $\mg$ and $\theta$. Each time we obtain a new element $e \neq q_g \in \Gamma$, which is in $\gamma_{\theta_g}(t) \cap \Gamma$. The set $\{q_g : g \in G\}$ has the same cardinality as $\{\Ad_g \theta : g \in SL_2(\C) \}$. The latter set is, indeed, $\N-\{0\}\subset \g{sl}_2(\C)$ which, as we mentioned before, is a four-dimensional manifold. This contradicts the assumption that $\Gamma$ is finitely generated. So, for some of the metrics $\mg_g$, the geodesic $\pi(\gamma_{\theta_g})$ must not be closed which provides us with the type of an example that we were looking for.

%%%%%%%%%%%%% References %%%%%%%%%%%%%%%%%%%
\bibliographystyle{amsplain}
%\bibliography{MyAllReferences.bbl}

\end{document}